\let\cl@chapter\undefined
\providecommand\theHALG@line{\thealgorithm.\arabic{ALG@line}}
\long\def\@makecaption#1#2{%
 \captionstyle
 \ifx\@captype\fig@type
   \vskip\figcapgap
 \fi
 \setbox\@tempboxa\hbox{{\floatlegendstyle #1\floatcounterend}%
 \capstrut #2}%
 \ifdim \wd\@tempboxa >\hsize
   {\floatlegendstyle #1\floatcounterend}\capstrut #2\par
 \else
   \hbox to\hsize{\leftlegendglue\unhbox\@tempboxa\hfil}%
 \fi
 \ifx\@captype\fig@type\else
   \vskip\tabcapgap
 \fi}
\spnewtheorem{ass}{Assumption}{\bf}{\it}
\def\<#1,#2>{\langle #1,#2 \rangle}
\DeclareMathOperator{\trace}{trace}
\DeclareMathOperator{\argmax}{argmax}
\newcommand{\mC}{\mathcal{C}}
\newcommand{\cS}{\mathcal{S}}
\newcommand{\cD}{\mathcal{D}}
\newcommand{\cF}{\mathcal{F}}
\newcommand{\cU}{\mathcal{U}}
\newcommand{\cY}{\mathcal{Y}}
\newcommand{\R}{\mathbb{R}}
\newcommand{\bI}{\mathbf{I}}
\newcommand{\bzero}{\mathbf{0}}
\newcommand{\bone}{\mathbf{1}}
\newcommand{\diag}{\operatorname{diag}}
\algnewcommand{\IIf}[1]{\State\algorithmicif\ #1\ \algorithmicthen}
\algnewcommand{\EndIIf}{\unskip\ \algorithmicend\ \algorithmicif}
\tikzstyle{nodes1} = [circle, rounded corners, minimum width=1cm, minimum height=1cm,text centered, draw=black, fill=red!30]
\tikzstyle{arrow} = [thick,->,>=stealth]
\begin{document}

\title{Globally Solving Concave Quadratic {Programs} via Doubly Nonnegative Relaxation
}

\titlerunning{Globally Solving Concave QPs via DNN Relaxation}        

\author{Zheng Qu \and Tianyou Zeng \and Yuchen Lou 
}

\authorrunning{Z. Qu, T. Zeng, Y. Lou} 

\institute{Z. Qu \at
    {School of Mathematical Sciences, Shenzhen University}. \\
    \email{{quzheng2003@gmail.com}}           
    \and
    T. Zeng \at
    Department of Mathematics, The University of Hong Kong.\\
    \email{logic@connect.hku.hk}
    \and
    Y. Lou \at
    Department of Industrial Engineering and Management Sciences, Northwestern University. \\
    \email{yuchenlou2026@u.northwestern.edu}
}

\date{
}

\maketitle
\begin{abstract}
We consider the problem of maximizing a convex quadratic function over a bounded polyhedral set. We design a new framework based on SDP relaxations and cutting plane methods for solving the associated reference value problem. The major novelty is a new way to generate valid cuts through the doubly nonnegative (DNN) relaxation. We establish various theoretical properties of the DNN relaxation, including its equivalence with the Shor relaxation of an equivalent quadratically constrained problem, the strong duality, and the generation of valid cuts from an approximate solution of the DNN relaxation  returned by an arbitrary SDP solver. Computational results on both real and synthetic data demonstrate the efficiency of the proposed method and its ability to solve high-dimensional problems with dense data. In particular, our new algorithm successfully solves in 3 days the reference value problem arising from computational biology for a dataset containing more than 300,000 instances of dimension 78. In contrast, CPLEX or Gurobi is estimated to require years of computational time for the same dataset on the same computing platform.

\keywords{Concave quadratic programming \and Doubly nonnegative relaxation \and Cutting plane method \and Strong duality \and Valid bound \and Large-scale nonconvex programming }
\subclass{90C20 \and 90C22 \and 90C26 \and 90C59}
\end{abstract}

\section{Introduction}
\label{sec:intro}

\subsection{Background}\label{subsec:bac}
We consider the following problem of maximizing a convex quadratic function over a  polyhedral set:
\begin{equation}
    \label{prob:original_max_qp0}
    \begin{aligned}
        \Phi^*(\cF):=  \max_{y\in \R^{k}}  &  \qquad   \Phi(y)  \\
        ~ \textrm{s.t.} &\qquad   y\in \cF.
    \end{aligned}
\end{equation}
Here,
$$\Phi(y) \equiv y^\top Q y + 2 d^\top y+\nu$$ is a convex quadratic function, where
$Q\in \R^{k\times k}$ is a positive semidefinite matrix, $d\in \R^k$ is a vector and $\nu\in \R$ is a scalar. The feasible region $\cF$ is a polyhedral set written in the following form:
\begin{align}\label{a:F}
    \cF:=\{ y\in \R^{k}: F^\top y\leq w,\,\,y\geq 0\} ,
\end{align}
where
$F\in \R^{k\times m}$ and $w\in \R^m$. We further assume that $\cF$ has nonempty interior and is bounded.
Note that problem~\eqref{prob:original_max_qp0} is equivalent to minimizing a concave quadratic function over a polyhedral set.
For this reason, we shall refer to problem~\eqref{prob:original_max_qp0} as a concave quadratic program (QP).

Concave QP has important applications in many areas, including  engineering \cite{guisewite1990minimum,momoh1995economic}, industry \cite{baesens2003benchmarking,burkard1998quadratic}  and medical diagnosis \cite{fung2003disputed,mangasarian1995breast}.
More recently, concave QP finds new applications in computational biology~\cite{JIAO2021559,zhao2020new}.
The biology problem concerned is the detection of undiscovered protein/genome sequence and a key step in the approach proposed in~\cite{JIAO2021559,zhao2020new} consists in solving the following problem:
\begin{align}
    \label{a:possddf0}
    \mathrm{determine~whether~~}
    \nu_R > \Phi^*(\cF) \enspace \mathrm{~or~~}  \Phi^*(\cF)\geq \nu_R \enspace ?
\end{align}
Here, $\nu_R$ is a certain prescribed reference value. For convenience, we shall refer to~\eqref{a:possddf0} as the \textit{reference value problem} associated with the concave QP problem~\eqref{prob:original_max_qp0}.
In this real application, $m$ is fixed to $22$ and the number of known protein/genome sequences of a certain species is equal to $m+k$.
Depending on the  species considered, the dimension $k$  or the number of reference value problems  may be very large.
For example, we have access to a dataset with 317,584 instances of dimension $k=78$, and a dataset with 3 instances of dimension $k=819$.
The new protein detection problem requires one to solve the reference value problem for \textbf{all} the instances in the dataset.

We randomly selected 100 instances from the 317,584 instances of dimension $k=78$, and employed two off-the-shelf commercial solvers \texttt{CPLEX} and \texttt{Gurobi} to solve them.
The time limit was set as 600 seconds for both solvers.
The results show that \texttt{CPLEX} fails to give an answer to the reference value problem~\eqref{a:possddf0} on 92 instances, and \texttt{Gurobi} fails on 48 instances.
The  performance of both \texttt{CPLEX} and \texttt{Gurobi} clearly does not make the grade, as the computational time of solving all 317,584 instances using \texttt{CPLEX} or \texttt{Gurobi} is estimated to be \textbf{at least 3 years}\footnote{If half of the instances require at least 600 seconds to solve on average, in total we need at least $600\times 317584/2=95275200$ seconds, which is roughly 3 years.  }.
Moreover, the 3 instances of dimension $k=819$ cannot be handled by
\texttt{CPLEX} nor \texttt{Gurobi} due to the large dimension.

Motivated by the challenges raised above, we develop in this paper a new method for solving the reference value problem~\eqref{a:possddf0} associated with the concave QP problem~\eqref{prob:original_max_qp0}.
This new method can also be naturally extended to a global solver of~\eqref{prob:original_max_qp0}.
We focus on the efficiency of the algorithm  for large dimensional problems.
In particular, the algorithm should be highly efficient so that the 317,584 instances of dimension $k=78$ can be solved in a reasonable amount of time.
Moreover, the algorithm should be able to deal with high-dimensional problems so that  instances of dimension up to $k=819$ can be handled as well.

\subsection{Related work}

Problem~\eqref{prob:original_max_qp0} falls into the class of general nonconvex QP problems.   There are three major techniques to find a global solution of general QP problems: reformulation, branching, and relaxation~\cite{liuzzi2022computational,nohra2021spectral}.
QP problems can be reformulated in different ways, including bilinear reformulation~\cite{konno1976cutting,konno1976maximization}, KKT reformulation~\cite{burer2008finite,chen2012globally}, completely positive reformulation~\cite{Burer2012}, and mixed-integer LP reformulation~\cite{gondzio2021global,xia2020globally}.
After certain reformulation of the QP according to the problem structures, branch-and-bound (B\&B) algorithms are employed in nearly all the state-of-the-art global QP solvers.
There exist various branching strategies based on different reformulations~\cite{chen2012globally,xia2020globally}. Importantly, the efficiency of the bounding step  crucially depends on the quality of the relaxations utilized.
Well-known relaxations include, for example, McCormick relaxation \cite{mccormick1976computability}, semidefinite programming (SDP) relaxation \cite{luo2010semidefinite,nesterov1998semidefinite},  convex quadratic relaxation by separable programming or DC (difference of convex functions) programming \cite{horst2013handbook}, and doubly nonnegative relaxation~\cite{Burer2012}.

Regarding the concave QP problem~\eqref{prob:original_max_qp0}, it has also been extensively studied in the literature and  is known to be an NP-hard problem \cite{pardalos1991quadratic}.
Traditional methods mainly include enumerative methods~\cite{cabot1970solving}, cutting plane methods~\cite{konno1976maximization,Tuy1964concave}, successive approximation methods (see Chapter~6 in \cite{horst2013global}), and branch-and-bound methods~\cite{burer2008finite,chen2012globally}.
Details and references for early work on concave QP can be found in the survey paper of Pardalos and Rosen \cite{pardalos1986methods} and in the book of Horst and Tuy \cite{horst2013global}.
More recently, Zamani~\cite{zamani2019new} proposed a method which combines the cutting plane method and the branch-and-bound method for concave QP.
Telli et al.~\cite{telli2020successive} proposed  to approximate the global optimum by solving a sequence of linear programs constructed from a local approximation set.
Hladik et al.~\cite{HladikMilan20,HladikMilanZamani21} proposed new bounds for concave QP based on factorization.

\subsection{Review on the doubly nonnegative relaxation}
In this section, we recall the concept of the doubly nonnegative (DNN) relaxation of a QP.
We first reformulate the original QP~\eqref{prob:original_max_qp0} into an equivalent form with linear equality constraints.
Let $n=m+k$. One can lift the feasible region $\cF$ into $\R^n$ and
 rewrite~\eqref{prob:original_max_qp0} in the following form:
 \begin{equation}
    \label{prob:original_max_qps}
    \begin{aligned}
          \max_{x\in \R^n}  &  \qquad   x^\top \begin{pmatrix}
            Q & \bzero  \\
            \bzero^\top & 0
        \end{pmatrix} x + 2 \begin{pmatrix} d^\top &\bzero^{{\top}}\end{pmatrix} x +\nu \\
        ~ \textrm{s.t.} &\qquad    \begin{pmatrix} F^\top & \bI\end{pmatrix} x=w \\ & \qquad\enspace  x\geq 0.
    \end{aligned}
        \end{equation}
In problem~\eqref{prob:original_max_qps} above,  $\bf{0}$ denotes the zero vector or matrix of appropriate dimension,  and  $\bf{I}$  denotes the identity matrix of appropriate dimension.
A square matrix is completely positive (CP) if it can be written as $BB^\top$ where $B$ is a matrix with  nonnegative elements only.
It is known, given in the theorem below, that problem~\eqref{prob:original_max_qps} can be reformulated as a linear program over the  cone of CP matrices.
\begin{theorem}[\cite{Burer2012}]
    The quadratic program~\eqref{prob:original_max_qps} is equivalent to
    \begin{equation}
        \label{prob:cpequiv}
        \begin{aligned}
            \max_{\substack{X\in \cS^{n}\\ x\in \R^{n}}} \quad & \<\begin{pmatrix}
            Q & \bzero  \\
            \bzero^\top & 0
        \end{pmatrix}, X> + 2 \begin{pmatrix} d^\top &\bzero^{{\top}}\end{pmatrix}  x+\nu \\
            \mathrm{s.t.} \quad & \begin{pmatrix} F^\top & \bI\end{pmatrix}x = w \\
            \quad & \diag\left(\begin{pmatrix} F^\top & \bI\end{pmatrix} X \begin{pmatrix} F \\ \bI\end{pmatrix}\right)=w\circ w \\
            \quad & \begin{pmatrix}
                X & x \\ x^\top  & 1
            \end{pmatrix} \in \mC_{n+1}.
        \end{aligned}
    \end{equation}
\end{theorem}
Here, $\circ$ denotes the operation of the element-wise product of two vectors, $\cS^{n}$ is the space of $n$-by-$n$ symmetric matrices and
$\mC_{n+1}$ is the
  cone of $(n+1)$-by-$(n+1)$ completely positive  matrices:
$$
\mC_{n+1}=\left\{BB^\top: B \in \R_+^{(n+1)\times \ell}, \ell\in \mathbb{N} \right\}.
$$
The CP cone $\mC_{n+1}$ can be approximated from the outside by a hierarchy of cones  of linear- and semidefinite-representable cones $\cD_0 \supset \cD_{1}\supset \dots\supset \mC_{n+1}$, with $\cD_0$ corresponding to the cone of doubly nonnegative (DNN) matrices, i.e., the matrices that are positive semidefinite and elementwisely nonnegative.
Replacing $\mC_{n+1}$ by $\cD_0$, we obtain the following \textit{doubly nonnegative relaxation} of~\eqref{prob:original_max_qps}:
\begin{equation}
    \label{prob:dnnrelax}
    \begin{aligned}
        \bar\Phi(\cF):=\max_{\substack{X\in \cS^{n}\\ x\in \R^{n}}} \quad & \<\begin{pmatrix}
            Q & \bzero  \\
            \bzero^\top & 0
        \end{pmatrix}, X> + 2 \begin{pmatrix} d^\top &\bzero^{{\top}}\end{pmatrix} x+\nu \\
        \textrm{s.t.} \quad & \begin{pmatrix} F^\top & \bI\end{pmatrix} x = w \\
        \quad & \diag\left(\begin{pmatrix} F^\top & \bI\end{pmatrix} X \begin{pmatrix} F \\ \bI\end{pmatrix}\right)=w\circ w \\
        \quad & \begin{pmatrix}
            X & x \\ x^\top  & 1
        \end{pmatrix} \geq 0,\enspace
        \begin{pmatrix}
            X & x \\ x^\top  & 1
        \end{pmatrix} \succeq 0.
    \end{aligned}
\end{equation}
It is easy to see that:
\begin{align}\label{a:barF}
\bar\Phi(\cF) \geq \Phi^*({\cF}).
\end{align}
In the following we refer to~\eqref{prob:dnnrelax} as the DNN relaxation of~\eqref{prob:original_max_qps}, and $\bar \Phi(\cF)$ as the DNN bound of $\Phi^*({\cF})$.
 Note that~\eqref{prob:dnnrelax} is a semidefinite program (SDP) and can be solved numerically by several existing SDP solvers, e.g.~\cite{sun2020sdpnal+,TohToddTutuSDPT3,WenGoldfardYin10}.
There are also efficient solvers specially dedicated to approximate the DNN bound $\bar\Phi(\cF)$, e.g.~\cite{KimKojimaToh16}.

\subsection{Approach and contribution}
\label{sec:approach_contribution}

We summarize in this subsection the main routine of our proposed algorithm and its novelty.

A vector $\bar y\in \cF$ is a Karush–Kuhn–Tucker (KKT) point of~\eqref{prob:original_max_qp0}  if it satisfies
\begin{equation}\label{eq:sdfsew1}
    \begin{aligned}
        0 ~ = ~ \max    &\qquad  \<\nabla \Phi(\bar y), y-\bar y> \\
        ~ \textrm{s.t.} & \qquad    y \in \cF.
    \end{aligned}
\end{equation}
A vector $\bar y\in \cF$ is said to be a \textit{KKT vertex} if $\bar y$ is both a vertex of the polyhedral set $\cF$ and a KKT point of~\eqref{prob:original_max_qp0}. Note that any optimal solution of~\eqref{prob:original_max_qp0} must be a KKT point, and at least one optimal solution of~\eqref{prob:original_max_qp0} is a vertex of $\cF$.

In the proposed method, we start by searching for a  KKT vertex $\bar y$ of~\eqref{prob:original_max_qp0}.
If $\Phi({\bar y}) \geq \nu_R$, we conclude that $\Phi^*(\cF) \geq \nu_R $ and the reference value problem~\eqref{a:possddf0} is solved.
If $\Phi({\bar y}) < \nu_R$, we compute the DNN bound $\bar\Phi(\cF)$ defined by~\eqref{prob:dnnrelax}.
If
\begin{align}\label{a:barPvR}\bar \Phi(\cF)< \nu_R,
\end{align}
we conclude that  $\Phi^*(\cF) < \nu_R $ and the reference value problem is solved.
If instead $\bar \Phi(\cF)\geq  \nu_R$, we propose to rely on the classical cutting plane method to proceed.

We will iteratively add valid cuts to the original problem~\eqref{prob:original_max_qp0} so that the feasible region is reduced successively until we find an answer to~\eqref{a:possddf0}.
The major novelty of our approach is the computation of valid cuts from the DNN relaxation.
Let $\bar y$ be a KKT vertex of the QP problem~\eqref{prob:original_max_qp0} such that $\Phi({\bar y})<\nu_R$.
If there is a region $\Delta \ni \bar y$  such that
\begin{align}\label{a:opsd}
    \nu_R >  \Phi^*(\cF\cap \Delta):= \max \{\Phi(y): y \in  \cF\cap\Delta\},
\end{align}
then we only need to consider the restricted region $\cF \backslash \Delta$ instead of $\cF$ to find an answer to~\eqref{a:possddf0}.
We shall search for a region $\Delta \ni \bar y$ such that~\eqref{a:opsd} holds and a valid cut can be added to describe the restricted region $\cF\backslash \Delta$.
This cut will exclude a region which contains the KKT point {$\bar y$}.
Therefore, we can continue to deal with a concave QP problem with a strictly smaller feasible region $\cF\backslash \Delta$, and the same process is repeated until an answer to~\eqref{a:possddf0} is obtained.

It is interesting to note that~\eqref{a:opsd} is in fact a reference value problem defined in~\eqref{a:possddf0}.
Directly verifying~\eqref{a:opsd} being difficult, it is common to replace $\Phi^*(\cF\cap \Delta)$ in~\eqref{a:opsd} by a computable upper bound of it.
In~\cite{Tuy1964concave}, Tuy proposed to generate a valid cut using the following computable upper bound of $\Phi^*(\cF\cap \Delta)$:$$\max \{\Phi({y}): {y}\in \Delta\}.$$
In~\cite{konno1976maximization}, Konno proposed  another computable upper bound of $\Phi^*(\cF\cap \Delta)$:
\begin{align}\label{a:sfdsfertts}
    \bar \Phi^K(\cF\cap \Delta):=\max\{\Psi({y},{\tilde y}): {y}\in \Delta, {\tilde  y} \in \cF \},
\end{align}
where $\Psi({y},{\tilde y}):={y^\top Q \tilde y+ d^\top y+d^\top \tilde y+\nu}$.
In this paper,
we propose to employ the DNN bound $\bar \Phi(\cF)$ and validate~\eqref{a:opsd} by the following condition:
\begin{align}\label{a:posdfw}
\bar \Phi(\cF\cap \Delta)< \nu_R.
\end{align}
We show that the cut generated by our method is always deeper than Konno's cut, by proving in \Cref{thm:phi-val-comp} that
\begin{align}\label{a:psdfwe}
 \bar \Phi(\cF\cap \Delta)\leq  \bar \Phi^K(\cF\cap \Delta) .
\end{align}

The computation of {the} DNN bounds in~\eqref{a:barPvR} and~\eqref{a:posdfw} clearly plays {a} decisive role in the overall performance of the procedure described above.
With the goal of developing an efficient and robust concave QP solver in the large-scale setting, we also tackle the following two problems:
\begin{enumerate}
    \item  It is known that the SDP problems in the form of~\eqref{prob:dnnrelax} do not have an interior point {(see Proposition 8.3 and the last paragraph of Section 8.2.2 in~\cite{Burer2012})}.
    However, the theoretical convergence of the existing popular SDP solvers (e.g.~\cite{sun2020sdpnal+,TohToddTutuSDPT3,WenGoldfardYin10}) is established with the assumption of Slater's condition, i.e., the existence of an interior point.
\item
Any numerical solver using finite floating-point arithmetic is incapable of returning the exact value of $\bar\Phi(\cF)$.
Let $\nu$ be the  value returned by  a numerical SDP solver (e.g. \texttt{MOSEK} and { \texttt{SDPNAL+} \cite{sun2020sdpnal+}}) when solving~\eqref{prob:dnnrelax}.
Then $\nu$ is an approximation (whatever the precision is) of $\bar\Phi(\cF)$.
In particular, $\nu< \nu_R$ does not necessarily imply~\eqref{a:barPvR}.
A similar problem also occurs for computing $\bar \Phi(\cF\cap \Delta)$ and verifying~\eqref{a:posdfw}.
\end{enumerate}

To address the first issue, we prove in~\Cref{lemma:equiv-shor} that the DNN relaxation~\eqref{prob:dnnrelax} has an equivalent SDP formulation~\eqref{prob:SQP3Shor}, which corresponds to  {the Shor} relaxation applied to an equivalent quadratically constrained quadratic program (QCQP) of~\eqref{prob:original_max_qp0}.
We then show in~\Cref{prop:sd} that under some mild assumptions, Slater's condition holds for~\eqref{prob:SQP3Shor}, and hence all the above-mentioned SDP solvers are guaranteed to converge when applied to solve~\eqref{prob:SQP3Shor}.
For the second issue, we establish an explicit formula in~\Cref{prop:inexcs} for computing a valid upper bound of $\Phi^*(\cF)$ from any inexact primal-dual solution of the SDP problem~\eqref{prob:SQP3Shor}.
This result is crucial to allow the use of arbitrary SDP solvers for the computation of the DNN bound, including in particular those solvers with medium accuracy specially designed for large-scale SDPs.

We demonstrate the effectiveness of our approach through  extensive computational experiments on both real and synthetic instances.
We compare the performance of our algorithms with two of the most powerful commercial solvers \texttt{CPLEX} and \texttt{Gurobi} and an academic open-source software \texttt{quadprogIP} \cite{xia2020globally}.
Our algorithm successfully solves the earlier-mentioned 317,584 instances of dimension {$k=78$} within \textbf{3 days} using 32 processors in parallel.
For the three instances of dimension {$k=819$}, our algorithm is able to solve them in \textbf{a few minutes} on a standard laptop.
On a set of randomly generated instances of dimension 100 to 500, our algorithm also exhibits superior performance compared with other solvers.

The paper is organized as follows.
In~\Cref{sec:DNN}, we study the properties of the DNN relaxation~\eqref{prob:dnnrelax}, show its connection with {the Shor} relaxation, and discuss the computation of valid bounds from inexact SDP solutions.
In~\Cref{sec:cuts}, we review the cutting plane methods for the reference value problem, propose new cuts based on {the} DNN relaxation, and show its connection with Konno's cut.
In~\Cref{sec:cuttingplane} we describe our algorithm in detail.
We report numerical results in~\Cref{sec:experiments}.
In~\Cref{sec:conclusion} we conclude.
For clarity of the presentation, some details are moved into the Appendix.

\subsection{Notations}\label{subsec:notations}

Throughout the paper we let $[n]:=\{1,\ldots,n\}$.
We use $\bf{0}$ to denote {the} zero vector or matrix of appropriate dimension, $\bf{1}$ to denote {the} vector of all ones,  and  $\bf{I}$ to denote the identity matrix of appropriate dimension.
For any vector $x \in \R^n$ and matrix $X \in \R^{m \times n}$, we use $x_i\in \R$ to denote the $i$-th entry of $x$,  $X_{i, j}\in \R$ to denote the $(i, j)$-th entry of $X$ and $X_i\in \R^m$ to  denote the $i$-th column vector of $X$.
We adopt a MATLAB-like notation to represent the submatrix of $X\in \R^{m\times n}$, for example $X_{1:k, n}$ is the submatrix of $X$ formed by the elements at the intersection of  the first $k$ rows and the $n$-th  column of $X$.

For any vector, $x\geq 0$ (resp. $x>0$) means that $x$ is nonnegative (resp. positive), i.e. all the elements in $x$ are nonnegative (resp. positive).
For any two vectors $x$ and $y$ of the same dimension, $x\geq y$ means that $x-y\geq 0$.

For any matrix $X$,  $X\geq 0$ (resp. $X>0$) means that $X$ is nonnegative (resp. positive), i.e. all the elements in $X$ are nonnegative (resp. positive), and $X\succeq 0$ (resp. $X\succ 0$) means that $X$ is positive semidefinite (resp. positive definite).
We denote by $\cS^n$ the set of $n$-by-$n$ symmetric matrices.
For two matrices $X, Y\in \cS^n$, $X \succeq Y$ means that $X - Y {\succeq 0}$, and $X \geq Y$ means that  $X - Y { \geq 0}$.
We use $\mathrm{diag}(h)$ to represent the diagonal matrix with the diagonal vector being $h$.
We consider the Frobenius inner product $\left\langle \cdot, \cdot \right\rangle$ in the space of matrices, i.e.,  $\left\langle A, B \right\rangle := \mathrm{tr}(A^\top B)$.

\section{Doubly Nonnegative Relaxation}\label{sec:DNN}

The  DNN relaxation~\eqref{prob:dnnrelax}  provides an upper bound for the optimal value of~\eqref{prob:original_max_qp0}.
However, it is  known from~\cite[Prop. 8.3]{Burer2012} that the feasible region of the SDP problem~\eqref{prob:dnnrelax} has no interior.
The lack of interior points can be a serious defect from both theoretical and computational aspects.
In particular, for problems with no interior feasible point, the strong duality may not hold and convergence of  existing SDP solvers is not always guaranteed; see~\cite{Pataki19} for more discussion.

To resolve this issue, we propose to consider the standard Shor relaxation of the equivalent QCQP problem (see~\eqref{prob:SQP3Shor} below).
We prove the strong duality for the SDP problem obtained from the Shor relaxation and its equivalence with the SDP problem~\eqref{prob:dnnrelax} obtained from the DNN relaxation.
We also establish a formula for computing a valid upper bound of $\Phi^*(\cF)$ from an approximate solution returned by any arbitrary SDP solver, which is crucial to adapt the algorithm to a wide range of inexact SDP solvers.

\subsection{Equivalence of the DNN relaxation with the Shor relaxation}
Adding redundant constraints to~\eqref{prob:original_max_qp0},  we obtain the following {equivalent} QCQP problem:
\begin{equation}
    \label{prob:SQP3}
    \begin{aligned}
         \max_{y\in {{\R^k}}} \enspace &
        y^\top Q y + 2 d^\top y +\nu \\
        \textrm{s.t.}
        \quad & F^\top y\leq w \\ \quad & y\geq 0 \\
        \quad & (F^\top y-w) (F^\top y-w)^\top \geq 0\\
        \quad & y y^\top  \geq 0 \\
        \quad & (F^\top y-w)y^\top \leq 0.
    \end{aligned}
\end{equation}
It is easy to see that the optimal value of~\eqref{prob:SQP3} is equal to $\Phi^*(\cF)$.
The standard Shor relaxation of the QCQP problem~\eqref{prob:SQP3} yields the following SDP problem:
\begin{equation}
        \label{prob:SQP3Shor}
        \begin{aligned}
            \max_{{Y\in {\cS^{k}}, y\in {\R^{k}}}} \enspace &
            \<Q, Y> + 2 d^\top y  +\nu  \\
            \textrm{s.t.}
            \quad & F^\top y\leq w \\ \quad & y\geq 0 \\
            \quad & F^\top Y F-w y^\top F-F^\top y w^\top +ww^\top \geq 0\\
            \quad & Y \geq 0
            \\ \quad & w y^\top -F^\top Y \geq 0\\
            \quad &\begin{pmatrix}
                Y & y \\ y^\top & 1
            \end{pmatrix}  \succeq 0 .
        \end{aligned}
    \end{equation}
\begin{lemma}
    \label{lemma:equiv-shor}
    The DNN relaxation of the program~\eqref{prob:original_max_qps}, as defined by~\eqref{prob:dnnrelax}, is equivalent to the Shor relaxation of the  QCQP problem~\eqref{prob:SQP3}, as defined by~\eqref{prob:SQP3Shor}. In particular, the optimal value of~\eqref{prob:SQP3Shor} is equal to $\bar \Phi(\cF)$.
\end{lemma}
\begin{proof}
    Let $(Y,y)$  be a feasible solution to~\eqref{prob:SQP3Shor}.
    Let {$X\in \cS^{n}$ and $x\in \R^n$} such that
    $$
     {X:=\begin{pmatrix}
        Y &  y w^\top - Y F  \\
        w y^\top - F^\top Y & \enspace F^\top Y F  -F^\top y w^\top -w y^\top F  +w w^\top
    \end{pmatrix}, \enspace x:=\begin{pmatrix}
    y \\
    w-F^\top y
    \end{pmatrix}.}
    $$
   Since $(Y,y)$ satisfies all the constraints of~\eqref{prob:SQP3Shor}, we have $X\geq 0$ and $x\geq 0$.
    It can be directly checked  that  $$ \begin{pmatrix}{F^\top} & \enspace \bI \end{pmatrix}x=w, $$
    $$
    \begin{pmatrix}
        F^\top &  \bI
    \end{pmatrix}  X  \begin{pmatrix}
        F  \\  \bI
    \end{pmatrix} = w w ^\top,
    $$
 and
    $$
    X=\begin{pmatrix}
        y \\ w-F^\top y
    \end{pmatrix} \begin{pmatrix}
        y \\ w-F^\top y
    \end{pmatrix}^\top + \begin{pmatrix}
        \bI \\ -F^\top
    \end{pmatrix} \left(Y- y y^\top \right) \begin{pmatrix}
        \bI \\ -F^\top
    \end{pmatrix} ^\top \succeq 0.
    $$
    Therefore, $(X,x)$ is a feasible solution of~\eqref{prob:dnnrelax}
    and
    \begin{align}\label{a:sbdfs} \langle \begin{pmatrix}
            Q & \bzero  \\
            {\bzero^\top} & 0
        \end{pmatrix},X\rangle+ 2 \begin{pmatrix} d^\top & {\bzero^\top}\end{pmatrix} x+\nu=\<Q, Y>+2 d^\top y+\nu.\end{align}
    Now let {$(X,x)$} be any feasible solution of~\eqref{prob:dnnrelax} such that
    $${
    \begin{pmatrix} X & x \\ x^\top & 1
   \end{pmatrix}
   }=\sum_{\ell=1}^r  \alpha_{\ell} \begin{pmatrix}{r^{\ell}}  \\ s^{\ell} \\ 1\end{pmatrix} \begin{pmatrix}{r^{\ell}} \\ s^{\ell}  \\ 1 \end{pmatrix}^\top+  \sum_{\ell=r+1}^t  \begin{pmatrix}{r^{\ell}}  \\ s^{\ell} \\ 0\end{pmatrix} \begin{pmatrix}{r^{\ell}} \\ s^{\ell} \\ 0 \end{pmatrix}^\top,
    $$
    where $\alpha_1,\ldots \alpha_r\geq 0$ and
    $\sum_{\ell=1}^r \alpha_{\ell} = 1$.
    By \cite[Prop. 8.3]{Burer2012}, we know that
    $$
    s^{\ell} = \left\{ \begin{array}{ll}w-F^\top {r^{\ell}}, &\enspace \forall \ell\in [r],  \\
        -F^\top {r^{\ell}} ,& \enspace \forall \ell\in \{r+1,\ldots, t\}.
    \end{array}\right.
    $$
    Let
    $$
    Y=\sum_{\ell=1}^r \alpha_{\ell} {r^{\ell}} ({r^{\ell}})^\top +\sum_{\ell=r+1}^t {r^{\ell}} ({r^{\ell}})^\top,\enspace y=\sum_{\ell=1}^r \alpha_{\ell} {r^{\ell}}.
    $$
    Then
    $$
    \begin{aligned}
        {
    \begin{pmatrix} X & x \\ x^\top & 1
   \end{pmatrix}
   }&=\sum_{\ell=1}^r  \alpha_{\ell}\begin{pmatrix}{r^{\ell}}  \\ w-F^\top {r^{\ell}} \\ 1\end{pmatrix} \begin{pmatrix}{r^{\ell}} \\ w-F^\top {r^{\ell}} \\ 1 \end{pmatrix}^\top+  \sum_{\ell=r+1}^t  \begin{pmatrix}{r^{\ell}}  \\ -F^\top {r^{\ell}} \\ 0\end{pmatrix} \begin{pmatrix}{r^{\ell}} \\ -F^\top {r^{\ell}} \\ 0 \end{pmatrix}^\top\\
        &=\begin{pmatrix}
            Y &  y w^\top - Y F & y \\
            w y^\top - F^\top Y & \enspace F^\top Y F  -F^\top y w^\top -w y^\top F  +w w^\top & w-F^\top y  \\
            y^\top & w^\top -y^\top F & 1
        \end{pmatrix}.
    \end{aligned}
    $$
    And so $(Y, y)$ is a feasible solution to~\eqref{prob:SQP3Shor} satisfying~\eqref{a:sbdfs}.
    \qed
\end{proof}
\begin{remark}
The proof of~\Cref{lemma:equiv-shor} shows how to construct an optimal solution of~\eqref{prob:SQP3Shor} from an optimal solution of~\eqref{prob:dnnrelax}, and vice versa.
\end{remark}

\begin{remark}
    The size of the largest SDP matrix in~\eqref{prob:dnnrelax}  is $n+1$, while the size of the largest SDP matrix in~\eqref{prob:SQP3Shor} is  {$k+1$}.
    This already suggests that~\eqref{prob:SQP3Shor} could be computationally more favorable than~\eqref{prob:dnnrelax}.
    Moreover, we will show in the next subsection that {the} strong duality holds for~\eqref{prob:SQP3Shor}.
\end{remark}

\subsection{Strong duality}
We consider the SDP relaxation  in the form of~\eqref{prob:SQP3Shor}.
Let us express~\eqref{prob:SQP3Shor} in the following abstract way:
\begin{equation}
    \label{prob:SQP3Shorabs}
    \begin{aligned}
        \max_{\substack{\hat Y\in {\cS^{k+1}}}} \enspace &
        \< \hat Q,  \hat Y>  \\
        \textrm{s.t.}
        \quad &  \<W^{(i,j)}, \hat Y>\leq 0,\enspace 0\leq i<j\leq {k+m}\\
        \quad & \<W^0, \hat Y>=1\\
        \quad &\hat Y  \succeq 0.
    \end{aligned}
\end{equation}
Here,
$$ \hat Q:= \begin{pmatrix}
    Q & d \\
    d^\top & \nu
\end{pmatrix},\enspace \enspace
W^0 := \begin{pmatrix}
    \bzero & \bzero \\
    \bzero^\top & 1
\end{pmatrix},
$$
and $\{W^{(i,j)}: 0\leq i<j \leq k+m\}$ are matrices for representing the polyhedral constraints in~\eqref{prob:SQP3Shor}:
\small
\begin{equation*}
    W^{(i,j)} :=
    \left\{ \begin{array}{ll}
        \begin{pmatrix} \bzero & -e_{j} \\ -e_{j}^\top & 0
        \end{pmatrix}\enspace &  i=0,\enspace 1\leq j\leq k \\
        \begin{pmatrix} \bzero & F_{j-k} \\ F_{j-k}^\top & - 2w_{j-k} \end{pmatrix} \enspace & i=0, \enspace   k < j \leq k+m  \\
        \begin{pmatrix}- e_i  e^\top_j-  e_j e_i^\top & \bzero \\ \bzero^\top & 0 \end{pmatrix}\enspace &  1\leq i<j\leq  k\enspace
        \\ \begin{pmatrix}
            F_{j-k} e_i^\top+e_i F_{j-k}^\top  & -w_{j-k} e_i\\ -w_{j-k} e_i^\top & 0
        \end{pmatrix} \enspace &  1\leq i\leq  k< j\leq m+k\\
        \begin{pmatrix}
            -F_{j-k} F^\top_{i-k}- F_{i-k}F^\top_{j-k} & w_{i-k} F_{j-k}+ w_{j-k}  F_{i-k}  \\
            w_{i-k}  F^\top_{j-k}+ w_{j-k}  F^\top_{i-k}  & -2 w_{i-k} w_{j-k}
        \end{pmatrix}\enspace &  k+1\leq  i<j \leq  m+k.
    \end{array}\right.
\end{equation*}
\normalsize
Here, $e_1,\ldots, e_{k}$ are the standard basis vectors of $\R^{{k}}$.
The dual of~\eqref{prob:SQP3Shor} can thus be written as:
\begin{equation}
    \label{prob:SQP3Shorabsdual}
    \begin{aligned}
        \min \enspace &
        {\lambda_0}  \\
        \textrm{s.t.}
        \enspace &  \hat Q+\sum_{i<j} \lambda_{i,j} W^{(i,j)}-{\lambda_0} W^0 \preceq 0\\
        \quad  &\lambda_{i,j}\leq 0,\enspace  0\leq i<j\leq {k+m}.
    \end{aligned}
\end{equation}

Now we establish the strong duality between \eqref{prob:SQP3Shorabs} and its dual \eqref{prob:SQP3Shorabsdual} under the following assumption.
\begin{ass}
\label{ass:bound_and_interior}
    $\cF$ is bounded and has  nonempty interior.
\end{ass}
\begin{lemma}\label{l:psdf}
    {If $\cF$ has nonempty interior}, there exists a solution $(Y, y)$ strictly feasible to~\eqref{prob:SQP3Shor}.
\end{lemma}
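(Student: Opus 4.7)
The plan is to construct an explicit strictly feasible $(Y,y)$ of the form $y=y_0$ and $Y=y_0 y_0^\top + \varepsilon \bI$, where $y_0$ comes from Assumption~\ref{ass:nei} and $\varepsilon>0$ is a sufficiently small parameter. First I would invoke Assumption~\ref{ass:nei} to pick $y_0$ with $F^\top y_0 \le w-\rho_*$ and $y_0\ge \rho_*$; this already ensures $y>0$ and $F^\top y<w$ strictly, handling the linear constraints.

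Next, I would verify the remaining constraints. For the semidefinite constraint, a Schur complement argument shows that
$$
\begin{pmatrix} y_0 y_0^\top + \varepsilon \bI & y_0 \\ y_0^\top & 1 \end{pmatrix} \succ 0
$$
since the Schur complement $(y_0 y_0^\top + \varepsilon \bI) - y_0 y_0^\top = \varepsilon \bI \succ 0$. The entrywise constraint $Y>0$ follows because $(Y)_{ij} = (y_0)_i(y_0)_j + \varepsilon \delta_{ij} > 0$ using $y_0>0$. For the constraint $F^\top Y F - w y^\top F - F^\top y w^\top + w w^\top \ge 0$, a direct expansion gives
$$
F^\top Y F - w y_0^\top F - F^\top y_0 w^\top + w w^\top = (w-F^\top y_0)(w-F^\top y_0)^\top + \varepsilon F^\top F.
$$
The first term is entrywise strictly positive because $w-F^\top y_0>0$ componentwise, so choosing $\varepsilon$ small enough makes the whole expression entrywise strictly positive (regardless of the signs of the entries of $F^\top F$). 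Similarly,
$$
w y_0^\top - F^\top Y = (w - F^\top y_0) y_0^\top - \varepsilon F^\top,
$$
whose dominant term is entrywise strictly positive, so it remains so for small $\varepsilon$.

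The argument is essentially routine; the only point requiring care is recognizing that $\varepsilon F^\top F$ and $\varepsilon F^\top$ may have negative entries, but this is harmless because the leading terms $(w-F^\top y_0)(w-F^\top y_0)^\top$ and $(w-F^\top y_0)y_0^\top$ are \emph{strictly} positive entrywise. Thus an appropriate choice of $\varepsilon>0$ (depending on $\rho_*$ and $F$) yields strict feasibility of all constraints simultaneously.
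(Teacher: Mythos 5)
Your proof is correct, and it reaches the same conclusion by a slightly different construction. Both you and the paper start from the Slater point $y_0$ of the polyhedron guaranteed by Assumption~\ref{ass:nei}; the difference is in how the rank-one lift $y_0y_0^\top$ is perturbed into a positive definite $Y$. The paper forms $n-m+1$ affinely independent feasible points $y^0, y^0+\epsilon e_1,\dots$ and averages their rank-one lifts, so the perturbed $(Y,y)$ stays in the convex hull of feasible rank-one lifts and every linear constraint of~\eqref{prob:SQP3Shor} is inherited for free by convexity; only the full rank of the moment matrix needs checking. You instead add $\varepsilon\bI$ directly, which leaves that convex hull, so you must re-verify each entrywise constraint by hand --- and you do so correctly, expanding $F^\top Y F - w y_0^\top F - F^\top y_0 w^\top + ww^\top = (w-F^\top y_0)(w-F^\top y_0)^\top + \varepsilon F^\top F$ and $w y_0^\top - F^\top Y = (w-F^\top y_0)y_0^\top - \varepsilon F^\top$, and noting that the leading terms are entrywise bounded below by $\rho_*^2>0$ so that small $\varepsilon$ absorbs the possibly negative entries of $\varepsilon F^\top F$ and $-\varepsilon F^\top$. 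Your route is more direct and the Schur-complement argument for positive definiteness is immediate for any $\varepsilon>0$; the paper's route requires less computation per constraint but a separate linear-independence argument. As a bonus, your point satisfies all the entrywise inequalities strictly as well, which is at least as strong a form of strict feasibility as what the paper establishes.
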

\begin{proof}
    Let $y^0\in \R^{{k}}$ such that
    $F^\top y^0 < w$ and $y^0 >0$.  Let $\epsilon>0$ and $y^i=y^0+\epsilon e_i$ for each $i\in [{k}]$.
    Choose $\epsilon>0$  sufficiently small so that
    $\{y^{1},\ldots, y^{{k}}\}$ are all in the set $\{y\in \R^{{k}}: F^\top y \leq w, y\geq 0\}$.
    Let
    $$
    Y^i =y^i (y^i)^\top,\enspace \forall i\in \{0\} \cup [{k}].
    $$
    Then for each $i$, $(Y^i, y^i)$ is a feasible solution to~\eqref{prob:SQP3Shor}. Let
    $$
    Y=\frac{1}{{k}+1}\sum_{i=0}^{{k}} Y^i,\enspace y= \frac{1}{{k}+1}\sum_{i=0}^{{k}}  y^i.
    $$
 Clearly $(Y, y)$ is  feasible to~\eqref{prob:SQP3Shor}.
 Since the vectors $
    \left\{\begin{pmatrix}y^0 \\1\end{pmatrix},\ldots, \begin{pmatrix}y^{k} \\1\end{pmatrix}\right\}
    $ are linearly independent, the rank of the matrix $\begin{pmatrix}
                Y & y \\ y^\top & 1
            \end{pmatrix}$ is ${k}+1$. It follows that $(Y, y)$ is strictly feasible to~\eqref{prob:SQP3Shor}.
    \qed
\end{proof}
\begin{lemma}
    \label{l:wsdgseg}
   {If $\cF$ is bounded and there is at least one $y\in \cF$ such that $y>0$}, the dual SDP problem~\eqref{prob:SQP3Shorabsdual} is strictly feasible.
\end{lemma}
\begin{proof}
    The conditions imply the existence of some {$t_*> 0$} such that
    \begin{equation}\label{eq:tstar}
        \begin{aligned}
            t_*= \max_{y\in \R^{{k}}} \enspace &
            \bone^\top y  \\
            \mathrm{s.t.}
            \quad & F^\top y\leq w  \\
            \quad& y\geq 0 .
        \end{aligned}
    \end{equation}
     By {the} strong duality of \eqref{eq:tstar}, there is $a\in \R_{+}^{m}$ such that
    $$
    {h}:=Fa\geq  \bone,\enspace a^\top w =t_*.
    $$
We have
    $$
    \begin{aligned}
        &	\sum_{i=1}^{{k}}	\sum_{j={k}+1}^{{{k+m}}} a_{j-k} W^{(i,j)} \\
        &=\sum_{i=1}^{{k}}	\sum_{j=1}^{m} a_j \begin{pmatrix}
            F_j e_i^\top+e_i  F_j^\top  & - w_j e_i\\ -  w_j e_i^\top & 0
        \end{pmatrix}
        \\ & =  \sum_{i=1}^{{k}}  \begin{pmatrix}
            {h} e_i^\top+e_i {h}^\top  & - t_* e_i\\ -t_* e_i^\top & 0
        \end{pmatrix}
        \\&
        = \begin{pmatrix}
            {h} \bone^\top+ \bone {h}^\top  &   -t_*\bone \\
            -t_*\bone^\top   & 0
        \end{pmatrix}.
    \end{aligned}
    $$
    Thus
    $$
    \begin{aligned}
        &\sum_{i=1}^{{k}}	\sum_{j=k+1}^{{k+m}} a_{j-k} W^{(i,j)}
        +\sum_{1\leq i<j\leq {k}} ({h}_{i}+{h}_{j}) W^{(i,j)}\\
        &=\begin{pmatrix}
            {h} \bone^\top+ \bone {h}^\top  &   -t_*\bone \\
            -t_*\bone^\top   & 0
        \end{pmatrix} + \sum_{1\leq i<j\leq {k}} ({h}_{i}+{h}_{j})\begin{pmatrix}-e_i e^\top_j-e_j e_i^\top & \bzero \\ \bzero^\top & 0 \end{pmatrix}\\
        &=\begin{pmatrix}
            2\diag({h})   &   -t_*\bone \\
            -t_*\bone^\top   & 0
        \end{pmatrix}.
        \end{aligned}
    $$
    It follows that
    $$
    \begin{aligned}
        &\sum_{i=1}^{{k}}	\sum_{j=k+1}^{{k+m}} a_{j-k} W^{(i,j)}
        +\sum_{1\leq i<j\leq {k}} ({h}_{i}+{h}_{j}) W^{(i,j)}
        + {k} t_*^2 W^0
        \\
        &=\begin{pmatrix}
            2\diag({h})   &   -t_*\bone \\
            -t_*\bone^\top   & {k} t_*^2
        \end{pmatrix} .
        \end{aligned}
    $$
    Since
    \[(-t_* \bone)^\top (2\diag({h}))^{-1} (-t_* \bone)=\frac{1}{2} t_*^2 \left( \sum_{i=1}^{{k}} {h}^{-1}_i \right) \leq \frac{ {k}  t_*^2 }{2},\]
    {and $t_*>0$}, we have
    $$
    \sum_{i=1}^{{k}}	\sum_{j={k}+1}^{{k+m}} a_{j-k} W^{(i,j)}
        +\sum_{1\leq i<j\leq {k}} ({h}_{i}+{h}_{j}) W^{(i,j)}
        + {k} t_*^2 W^0 \succ 0.
    $$
    Therefore, if we take
    \begin{align*}
        & \lambda_{i, j} = -{\rho} a_{j-k} , \quad 1 \leq i \leq k < j \leq {k+m}, \\
        & \lambda_{i, j} = -{\rho} ({h}_i + {h}_j), \quad 1 \leq i < j \leq {k}, \\
        & {\lambda_0 = \rho k t_*^2},
    \end{align*}
    for sufficiently large ${\rho>0}$, and take other $\lambda_{i, j}$ (where $i = 0$ or $i > k$) to be 0, we see that there exist $\{\lambda_{i,j}: 0\leq i<j\leq n\} \subset \R_{-}$ and ${\lambda_0} \in \R $ such that
    $$
    \hat Q+\sum_{i<j} \lambda_{i,j} W^{(i,j)}-{\lambda_0} W^0  \prec 0.
    $$
    \qed
\end{proof}

By Lemma~\ref{l:psdf} and~\ref{l:wsdgseg}, we establish Slater's condition and it straightforwardly {implies the} strong duality for the SDP relaxation~\eqref{prob:SQP3Shor}.
\begin{proposition}
    \label{prop:sd}
    Under Assumption~\ref{ass:bound_and_interior}, both
    the SDP problem~\eqref{prob:SQP3Shor} and its dual SDP problem~\eqref{prob:SQP3Shorabsdual}   have an optimal solution and the strong duality holds.
\end{proposition}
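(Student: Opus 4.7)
The proof should be essentially immediate from the two lemmas just established, together with the textbook strong duality theorem for semidefinite programs. The plan is to invoke the classical Slater-type theorem: if a primal SDP in standard conic form admits a strictly feasible point (i.e., a feasible $\hat Y \succ 0$) and its dual also admits a strictly feasible point, then the optimal values of the primal and dual coincide, the common value is finite, and the optimum is attained on both sides.

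First I would recast~\eqref{prob:SQP3Shor} in the abstract primal form~\eqref{prob:SQP3Shorabs} already written in the paper, so that the feasible set is $\{\hat Y \succeq 0 : \langle W^0,\hat Y\rangle = 1,\ \langle W^{(i,j)},\hat Y\rangle \le 0\}$, and observe that~\eqref{prob:SQP3Shorabsdual} is the Lagrangian dual with multipliers $\lambda_{i,j}\le 0$ and free $\nu\in\R$. Then~\Cref{l:psdf} provides, under Assumption~\ref{ass:nei}, a primal-feasible $(Y,y)$ whose lifted matrix $\hat Y=\bigl(\begin{smallmatrix}Y & y\\ y^\top & 1\end{smallmatrix}\bigr)$ has full rank $n-m+1$, hence $\hat Y\succ 0$; and~\Cref{l:wsdgseg} provides, under Assumption~\ref{ass:com}, multipliers $(\lambda,\nu)$ with $\lambda_{i,j}<0$ and $\hat Q+\sum_{i<j}\lambda_{i,j}W^{(i,j)}-\nu W^0 \prec 0$, i.e., a strictly feasible dual point.

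With both sides strictly feasible, I would cite the standard SDP strong duality result (e.g.\ the conic version of Slater's condition) to conclude: the primal optimal value is finite and equal to the dual optimal value, and both are attained. Since the primal value equals $\bar\Phi(\cF)$ by~\Cref{lemma:equiv-shor}, the proposition follows.

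I expect no real obstacle: the genuine work has already been done in~\Cref{l:psdf} and~\Cref{l:wsdgseg}, which verify the two Slater conditions. The only point that needs a line of care is checking that the strictly feasible primal point constructed in~\Cref{l:psdf} truly gives $\hat Y\succ 0$ (not merely rank $\ge n-m+1$ in some weaker sense), and that the abstract form~\eqref{prob:SQP3Shorabs}--\eqref{prob:SQP3Shorabsdual} is indeed the correct conic primal/dual pair so that the textbook Slater theorem applies verbatim; both are routine.
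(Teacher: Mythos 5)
Your proposal is correct and follows exactly the route the paper intends: the paper states \Cref{prop:sd} immediately after \Cref{l:psdf} and \Cref{l:wsdgseg} with no further argument, leaving implicit precisely the step you spell out, namely that strict feasibility of both the primal~\eqref{prob:SQP3Shorabs} and the dual~\eqref{prob:SQP3Shorabsdual} yields zero duality gap and attainment on both sides by the standard conic Slater theorem. Your added care about $\hat Y \succ 0$ and the correctness of the primal--dual pairing is reasonable but routine, as you note.
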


As a result, convergence of state-of-the-art SDP algorithms, such as SDPT3 \cite{TohToddTutuSDPT3} and SDPNAL+~\cite{sun2020sdpnal+} which require the existence of an interior point in the feasible region, is guaranteed.

\subsection{Valid DNN bound from inexact SDP solution}\label{sec:vdnnbi}
Hereinafter, we will assume that Assumption~\ref{ass:bound_and_interior} holds. By~\Cref{lemma:equiv-shor}, in order to compute the DNN bound $\bar \Phi(\cF)$, we can solve the SDP problem~\eqref{prob:SQP3Shor}. With the strong duality property of~\eqref{prob:SQP3Shor}, we rest assured for the convergence of a wide range of SDP solvers.
We mention in particular the interior point type methods and the augmented Lagrangian type methods.
There are many powerful {commercial and academic} solvers that implement these methods: {\texttt{MOSEK}, \texttt{SDPT3}{~\cite{TohToddTutuSDPT3}}, \texttt{SDPNAL+}{~\cite{sun2020sdpnal+}}}, etc.

However, it is important to note that any numerical solver using finite floating-point arithmetic is incapable of returning  the exact value of $\bar \Phi (\cF)$.
Indeed, such numerical solvers terminate when a solution with sufficiently small {constraint violation} and primal dual optimality gap is found.
The value returned by the numerical SDP solver is only an approximation of the true value $\bar\Phi(\cF)$.
Additional care needs to be taken in order to get a valid upper bound of $\Phi^*(\cF)$.

To be more precise, consider applying any suitable SDP numerical solver to the primal SDP problem~\eqref{prob:SQP3Shorabs} and its dual SDP problem~\eqref{prob:SQP3Shorabsdual}. Let $\epsilon>0$ and assume that the pair of primal dual solution ${(\hat Y;  \lambda)}$ returned by the SDP solver satisfies the following {constraint violation} and  duality  gap condition{s}
\begin{equation}\label{eq:pdgapep}
    \left\{
    \begin{array}{ll}
        \<W^{(i,j)}, \hat Y>\leq \epsilon,  \enspace 0\leq i<j\leq {k+m} \\
        \<W^0, \hat Y>=1 \\
        \hat Y \succeq 0 \\
        \hat Q+\sum_{i<j} \lambda_{i,j} W^{(i,j)}-{\lambda_0} W^0 \preceq \epsilon \bI  \\
        {\lambda_{i,j}\leq 0, \enspace 0\leq i<j\leq {k+m}}\\
        |\<\hat Q,\hat Y>-{\lambda_0}  | \leq \epsilon.
    \end{array}
    \right.
\end{equation}
As long as $\epsilon>0$, there is no guarantee that the approximate primal optimal value $\<\hat Q,\hat Y>$ or the approximate dual optimal value {$\lambda_0$} is larger than $ \Phi^*(\cF)$.
We next show how to obtain a valid upper
bound of $\Phi^*(\cF)$ from $(\hat Y;  \lambda)$ satisfying~\eqref{eq:pdgapep}.
\begin{lemma}\label{l:oplfds}
    For any feasible solution $\hat Y$ of~\eqref{prob:SQP3Shorabs}, we have
    \begin{align}\label{eq:oplfds}
    \trace(\hat Y) \leq 1+t^2_*,
    \end{align}
    where $t_* > 0$ is the constant defined in~\eqref{eq:tstar}.
\end{lemma}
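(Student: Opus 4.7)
The plan is to exploit linear programming duality for the LP defining $t_*$ together with the nonnegativity-type constraints hidden inside $\{W^{(i,j)}\}$, in order to control the diagonal of the upper-left block of $\hat Y$.

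First I will decompose $\hat Y$ in block form, writing $\hat Y = \begin{pmatrix} Y & y \\ y^\top & 1 \end{pmatrix}$ with $Y \in \cS^{n-m}$ and $y \in \R^{n-m}$, where the constraint $\<W^0,\hat Y> = 1$ gives the $(n-m+1, n-m+1)$ entry. Then $\trace(\hat Y) = \trace(Y) + 1$, so it suffices to show $\trace(Y) \leq t_*^2$. Unpacking the definitions of $W^{(i,j)}$, I read off from feasibility the entrywise inequalities $y \geq 0$, $F^\top y \leq w$, $Y \geq 0$, and (from the block with $1 \leq i \leq n-m < j \leq n$) the matrix inequality $w y^\top - F^\top Y \geq 0$ componentwise.

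Next I invoke Assumption~\ref{ass:com}: since the LP in~\eqref{eq:tstar} has finite optimal value $t_*$, by LP strong duality there exists a vector $a \in \R_+^m$ with $Fa \geq \bone$ and $a^\top w = t_*$. I will then fix an index $i \in [n-m]$ and form the scalar $\sum_k a_k (F^\top Y)_{ki}$. On the one hand, using $w y^\top \geq F^\top Y$ and $a \geq 0$, this scalar is bounded above by $\sum_k a_k w_k y_i = t_* y_i$. On the other hand, rewriting it as $\sum_j (Fa)_j Y_{ji}$ and using $Fa \geq \bone$ together with $Y \geq 0$ yields the lower bound $\sum_j Y_{ji} \geq Y_{ii}$. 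Chaining these gives $Y_{ii} \leq t_* y_i$ for every $i$.

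Summing over $i$ produces $\trace(Y) \leq t_* \bone^\top y$, and a final application of the feasibility of $y$ in the LP~\eqref{eq:tstar} gives $\bone^\top y \leq t_*$, which completes the bound $\trace(Y) \leq t_*^2$ and therefore~\eqref{eq:oplfds}. The main subtlety I anticipate is not a technical obstacle but a bookkeeping one: matching the abstract constraints $\<W^{(i,j)}, \hat Y> \leq 0$ back to the concrete entrywise inequalities $Y \geq 0$ and $wy^\top - F^\top Y \geq 0$ in the right block positions, and then choosing the correct combination of Lagrange-type weights $a$ so that the positivity semidefiniteness of $\hat Y$ is never needed — only the combinatorial $\geq 0$ constraints and LP duality.
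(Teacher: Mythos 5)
Your argument is correct and is essentially the paper's own proof: the paper takes the same dual certificate $a\in\R_+^m$ with $Fa\geq\bone$, $a^\top w=t_*$, forms the nonnegative combination of the constraints $\<W^{(i,j)},\hat Y>\leq 0$ with $1\leq i\leq n-m<j\leq n$, and derives exactly your chain $\trace(Y)\leq (Fa)^\top Y\bone\leq t_*\bone^\top y\leq t_*^2$, just written as a single matrix inner product rather than entry by entry. One small correction to your closing remark: the constraints $W^{(i,j)}$ with $1\leq i<j\leq n-m$ only force the \emph{off-diagonal} entries of $Y$ to be nonnegative, so the inequality $\sum_j (Fa)_j Y_{ji}\geq Y_{ii}$ still needs $Y_{ii}\geq 0$, which in the abstract formulation~\eqref{prob:SQP3Shorabs} comes only from $\hat Y\succeq 0$ --- so positive semidefiniteness is in fact (mildly) used.
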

\begin{proof}
    Let
    $$
    \hat Y=\begin{pmatrix}
        Y & y \\ y^\top & 1
    \end{pmatrix}
    $$
    be a feasible solution of~\eqref{prob:SQP3Shorabs}.
    In the proof of~\Cref{l:wsdgseg}, we have shown that
    $$
    \begin{aligned} &
        \sum_{i=1}^{{k}}	\sum_{j={k}+1}^{{k+m}} a_{j-k} W^{(i,j)}
        &= \begin{pmatrix}
            {h} \bone^\top+ \bone {h}^\top  &   -t_*\bone \\
            -t_*\bone^\top   & 0
        \end{pmatrix}
    \end{aligned},
    $$
    where $a_{j-{k}}\geq 0$ for all $j\in\{{k}+1,\ldots,{k+m}\}$ and ${h}\geq \bone$. Taking inner product with $\hat Y$ on both sides, we obtain
    $$ \begin{aligned} 
        \sum_{i=1}^{{k}}	\sum_{j={k}+1}^{{k+m}} a_{j-k} \langle W^{(i,j)}, \hat Y\rangle
        =\langle \begin{pmatrix}
            {h} \bone^\top+ \bone {h}^\top  &   -t_*\bone \\
            -t_*\bone^\top   & 0
        \end{pmatrix}, \hat Y \rangle=2\left({h}^\top Y \bone-t_* \bone ^\top y\right).
    \end{aligned}
    $$
    Therefore, since $\langle W^{(i, j)}, \hat{Y} \rangle \leq 0$ {and $a_{j-k}\geq 0$}, we have
    $$
     {h}^\top Y \bone \leq t_* \bone^\top y \leq t^2_*.
    $$
    Since  ${h}\geq \bone$, we have
    $\bI\leq \bone {h}^\top$. Finally, since $Y\geq 0$,  it follows that $$\trace(Y)=\langle Y, \bI \rangle \leq \langle Y, \bone {h}^\top \rangle ={h}^\top Y\bone\leq  t_*^2.$$
    \qed
\end{proof}

\begin{proposition}\label{prop:inexcs}
    Let $\epsilon>0$ and let ${(\hat Y ; \lambda)}$ be a pair of  approximate  primal and dual solutions satisfying the gap conditions in~\eqref{eq:pdgapep}. Then,
    \begin{align}\label{a:inexcs}
    \bar\Phi(\cF) \leq \epsilon\left(t^2_*+1\right)+{\lambda_0}.
    \end{align}
\end{proposition}
\begin{proof}
    Let $(\hat Y^*, \lambda^*)$ be an optimal primal dual solution pair. Then,
    \begin{equation*}
        \begin{aligned}
            \bar \Phi(\cF)&=\<\hat Q, \hat Y^*> \\
            &=\<\hat Q+\sum_{i<j} \lambda_{i,j} W^{(i,j)}-{\lambda_0} W^0, \hat Y^*>-\sum_{i<j} \lambda_{i,j}\<W^{(i,j)}, \hat Y^*>+{\lambda_0} \<W^0, \hat Y^*> \\
            & \overset{\eqref{eq:pdgapep}}{\leq} \epsilon \trace(\hat Y^*)+{\lambda_0}
            \\
            & \overset{\eqref{eq:oplfds}}{\leq} \epsilon \left(1+t_*^2\right)+{\lambda_0}.
        \end{aligned}
    \end{equation*}
    {Here the first equality follows from the fact that $\bar \Phi(\cF)$ is the optimal value of~\eqref{prob:SQP3Shorabs}. The second equality can be verified directly. The third inequality follows from the conditions in~\eqref{eq:pdgapep} and the positive semidefiniteness of $\hat Y^*$. The last inequality follows from~\eqref{eq:oplfds}.}
    \qed
\end{proof}
\Cref{prop:inexcs} enables us to obtain a valid upper bound of $\Phi^*(\cF)$ from any approximate primal-dual solution ${(\hat Y;\lambda)}$ returned by any SDP solver. The correction term to be added is $\epsilon(t^2_*+1)$, where $\epsilon$ depends on the constraint violation and the duality gap, and $t_*$ is defined in~\eqref{eq:tstar}.

\begin{remark}\label{rem:tstar}
Note that the exact value of $t_*$, which is the optimal value of the LP problem~\eqref{eq:tstar},  may not be computable (due to numerical tolerances and finite machine precision).  However, formula~\eqref{a:inexcs} remains valid if one replaces $t_*$ by any number larger than it.  An explicit upper bound is readily available  if $F$ has at least one column with positive elements only. Indeed, if $F_{1,1},\ldots,F_{k,1}$ are all positive, then
$$
        \begin{aligned}
            t_*\leq  \max_{y\in \R^{{k}}} \enspace &
            \bone^\top y  \\
            \mathrm{s.t.}
            \quad & F_{1,1}y_1+\cdots+F_{k,1}y_k\leq w_1 \\
            \quad& y_1,\ldots,y_k\geq 0 .
        \end{aligned}
$$
The optimal value of the above LP problem is known to be $ {w_1}\left({\min(F_{1,1},\cdots,F_{k,1})}\right)^{-1}$.
\end{remark}

\subsection{Upper bound of the DNN bound}

In this subsection, we establish an upper bound for the DNN bound $\bar\Phi(\cF)$. This result will be useful in the later section when we {compare different cuts}.

\begin{proposition}
    \label{prop:sfsdfsdf}
    Let $\Lambda \in \R_{+}^{{k}\times m}$  and $P\in \R_+^{{k}\times {k}}\cap \cS^{{k}}$  be  nonnegative   matrices  such that
    \begin{align}
        \label{a:cDe}
        -Q-P+\Lambda F^\top+ F\Lambda^\top  \succeq 0.
    \end{align}
    Then,
    $$
    \bar\Phi\left(\cF\right)  \leq  {\nu} + \max \left\{  2\left(d+ \Lambda w \right) ^\top y: F^\top y \leq w, y\geq 0 \right\}.
    $$
\end{proposition}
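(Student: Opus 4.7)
The plan is to rewrite $\bar\Phi(\cF)$ via the equivalent Shor relaxation~\eqref{prob:SQP3Shor} (using~\Cref{lemma:equiv-shor}) and then bound the objective value $\<Q,Y>+2d^\top y+v$ at any feasible $(Y,y)$ by $v+2(d+\Lambda w)^\top y$. Once this pointwise bound is established, the conclusion follows by taking the maximum over feasible $(Y,y)$: the right-hand side depends only on $y$, and the set of $y$'s that extend to a feasible pair in~\eqref{prob:SQP3Shor} is exactly $\{y:F^\top y\leq w,\,y\geq 0\}$ (e.g.\ take $Y=yy^\top$, which satisfies all the elementwise and PSD constraints).

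The pointwise bound reduces to showing $\<Q,Y>\leq 2w^\top\Lambda^\top y$. First, since $Y\succeq 0$ and $-Q-P+\Lambda F^\top+F\Lambda^\top\succeq 0$ by hypothesis, their Frobenius inner product is nonnegative:
\begin{equation*}
\<-Q-P+\Lambda F^\top+F\Lambda^\top,\,Y>\;\geq\;0.
\end{equation*}
Rearranging and using $P\geq 0$, $Y\geq 0$ entrywise to drop $\<P,Y>\geq 0$, and the symmetry of $Y$ to identify $\<\Lambda F^\top+F\Lambda^\top,Y>=2\<F\Lambda^\top,Y>$, yields
\begin{equation*}
\<Q,Y>\;\leq\;2\<F\Lambda^\top,Y>\;=\;2\,\mathrm{tr}(\Lambda F^\top Y).
\end{equation*}

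Second, I use the elementwise constraint $wy^\top-F^\top Y\geq 0$ from~\eqref{prob:SQP3Shor}, paired with $\Lambda^\top\geq 0$, to obtain
\begin{equation*}
0\;\leq\;\<\Lambda^\top,\,wy^\top-F^\top Y>\;=\;y^\top\Lambda w-\mathrm{tr}(\Lambda F^\top Y).
\end{equation*}
Combining the two inequalities gives $\<Q,Y>\leq 2y^\top\Lambda w=2w^\top\Lambda^\top y$. Adding $2d^\top y+v$ to both sides produces the claimed pointwise bound, and maximizing over feasible $y$ (as justified above) yields the conclusion.

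I do not expect any real obstacle; the argument is essentially bookkeeping of matrix inner products combined with the correct pairing of the hypothesis (PSD) with the PSD constraint on $Y$, and the pairing of $\Lambda\geq 0$ with the elementwise constraint $wy^\top-F^\top Y\geq 0$. The one subtlety worth checking carefully is the use of symmetry of $Y$ to collapse $\<\Lambda F^\top+F\Lambda^\top,Y>$ into $2\,\mathrm{tr}(\Lambda F^\top Y)$, matching exactly the multiplier that appears in the nonnegativity constraint on $wy^\top-F^\top Y$.
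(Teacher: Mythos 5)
Your proof is correct, and it takes a genuinely different and noticeably shorter route than the paper's. The paper works directly on the DNN formulation~\eqref{prob:DDN2}: it introduces scalar multipliers $\lambda_i$ for the quadratic equality constraints, decomposes $\hat H$ minus the multiplier terms as $-S-T+(\text{linear part})+(\text{$\epsilon$ part})$, proves $S\succeq 0$ via the auxiliary \Cref{l:lerps} after an $\epsilon$-perturbation of $Q$, and finally lets $\epsilon\to 0$. You instead pass to the Shor form~\eqref{prob:SQP3Shor} through \Cref{lemma:equiv-shor} (which is proved earlier, so there is no circularity) and use exactly two pairings: the hypothesis~\eqref{a:cDe} against $Y\succeq 0$ (which follows from the block constraint $\bigl(\begin{smallmatrix} Y & y\\ y^\top & 1\end{smallmatrix}\bigr)\succeq 0$, a point worth stating explicitly), and $\Lambda\geq 0$ against the RLT block $wy^\top - F^\top Y\geq 0$. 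All the inner-product bookkeeping checks out, including the identification $\<\Lambda F^\top + F\Lambda^\top, Y> = 2\tr(\Lambda F^\top Y)$ via symmetry of $Y$, and the final step only needs the easy inclusion that every feasible $(Y,y)$ has $F^\top y\leq w$, $y\geq 0$ (the converse extension via $Y=yy^\top$ is true but not required). What your approach buys is transparency: it isolates precisely which constraints of the relaxation carry the bound ($Y\succeq 0$, $Y\geq 0$, $wy^\top - F^\top Y\geq 0$, and LP-feasibility of $y$), and it dispenses with the $\epsilon$-limit and with \Cref{l:lerps} entirely. What the paper's longer computation buys is independence from \Cref{lemma:equiv-shor} and a multiplier construction ($S$, $T$, $\lambda(\epsilon)$) that is reused in spirit for the dual certificates elsewhere in \Cref{sec:DNN}; but as a proof of this proposition alone, yours is cleaner.
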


\begin{proof}
Let $(Y,y)$ be a feasible solution of{~\eqref{prob:SQP3Shor}}.
Since $P\geq 0$ and $Y\geq 0$, we have
\begin{align}\label{a:PY}
\<-P,Y>\leq 0.
\end{align}
Since
\begin{equation}\label{eq:Lambdageq}
\Lambda\geq 0,\enspace wy^\top -F^\top Y\geq 0,
\end{equation}
we have
\begin{align}\label{a:piwer}
\<\Lambda F^\top+ F\Lambda^\top,Y>=2\langle \Lambda, F^\top Y\rangle \leq 2 \langle\Lambda ,wy^\top\rangle=2\left(\Lambda w\right)^\top y.
\end{align}
Here, the inequality follows from~\eqref{eq:Lambdageq}.
Hence,
\begin{equation}\label{eq:dfsfer}
\begin{aligned}
\<Q,Y>+2 d^\top y & {~\leq~}  \<-P+\Lambda F^\top+ F\Lambda^\top,Y>+2d^\top y \\
& {~\leq~}  2\left(d+ \Lambda w \right) ^\top y .
\end{aligned}
\end{equation}
Here, the first inequality follows from~\eqref{a:cDe} and the fact that $Y\succeq 0$, and the second inequality follows from~\eqref{a:PY} and~\eqref{a:piwer}.
If $(Y^*,y^*)$ is an optimal solution of~\eqref{prob:dnnrelax} (the existence follows from~\Cref{prop:sd}), then
\begin{align*}
\bar \Phi(\cF)&=\<Q,Y^*>+2 d^\top y^* +\nu\\ &\leq {\nu}+2\left(d+ \Lambda w \right) ^\top y^*  \\& \leq   {\nu}+ \max \left\{  2\left(d+ \Lambda w \right) ^\top y: F^\top y \leq w, y\geq 0 \right\}.
\end{align*}
Here, the first inequality follows from~\eqref{eq:dfsfer}, and the second inequality uses the fact that $y^*$ satisfies $F^\top y^*\leq w$ and $y^*\geq 0$.
\qed
\end{proof}

\section{Generation of Valid Cuts}\label{sec:cuts}

In this section we review the idea of cutting plane algorithms and present classical as well as novel methods for the generation of valid cuts at any given KKT {vertex}.

Recall that we have, at our disposal, a reference value $\nu_R$ and the task is to determine whether
\begin{align}
    \label{a:possddf}
    \nu_R > \Phi^*(\cF) \enspace \mathrm{~or~~~}  \Phi^*(\cF)\geq \nu_R \enspace ?
\end{align}
For ease of presentation, we assume without loss of generality that the KKT vertex of interest is the origin point $\textbf{0}\in\R^k$ and the following three conditions hold:
\begin{subequations}\label{esdf}
\begin{align}
&w\geq 0,\\
&d\leq 0, \\
&\nu <\nu_R.
\end{align}
\end{subequations}
The first condition implies that $\textbf{0}$ is a vertex. The second condition implies that $\textbf{0}$ is a KKT point. The third condition means that the objective value at $\textbf{0}$ is strictly smaller than the reference value $\nu_R$.
 For details on reducing the general case to the case satisfying~\eqref{esdf}, see Appendix~\ref{sec:mp}.

\subsection{Cutting plane method}

Under the three conditions in~\eqref{esdf},
    in order to generate a valid cutting plane at the origin point $\textbf{0}\in \R^k$, we search for a nonnegative vector $\theta\in \R^{k}_+$ such that
\begin{equation}
    \label{eq:caQPcut}
    \begin{aligned}
   \nu_R> \max_{y\in \R^{k}} \quad &   y^T Q y + 2 d^\top y + \nu  \\
        \textrm{s.t.} \quad &  F^\top y\leq w\\&   y\geq 0\\
        & \theta^\top y\leq 1.
    \end{aligned}
\end{equation}
If~\eqref{eq:caQPcut} holds, then the cut {$\{y\in \R^{k}: \theta^\top y \geq 1\}$} is said to be \textit{valid} and
$$
 \Phi^*({\cF_{\theta}})\leq
\Phi^*(\cF) \leq \max \left(\nu_R,   \Phi^*({\cF_{\theta}})\right),
$$
where ${\cF_{\theta}}$ is defined as the intersection of the original feasible region $\cF$ and the cut {$\{y\in \R^{k}: \theta^\top y \geq 1\}$}:
$$
\cF_{\theta}:= \cF\cap {\{y\in \R^{k}: \theta^\top y \geq 1\}}.
$$
It is important to note that ${\textbf{0}}\notin {\cF_{\theta}}$ and hence ${\cF_{\theta}}$ is strictly included in $\cF$.
In order to determine~\eqref{a:possddf}, it suffices to determine whether
\begin{align}\label{a:possddf2}
    \nu_R >   \Phi^*({\cF_{\theta}}) \enspace \mathrm{~or~~~}   \Phi^*({\cF_{\theta}})\geq \nu_R \enspace ?
\end{align}
Thus the problem is reduced with a strictly smaller feasible region ${\cF_{\theta}}$. Repeating this procedure of adding valid cuts, the feasible region is reduced at each iteration until we find an answer to~\eqref{a:possddf2}.

In the following, we present different methods to find a nonnegative vector $\theta$ satisfying~\eqref{eq:caQPcut}.

\subsection{Tuy's and Konno's cut revisited}\label{subsec:konno}
We  give a brief review of Tuy and Konno's work \cite{konno1976cutting,konno1976maximization,Tuy1964concave}.
For ease of discussion, we denote for any $\theta\in \R_+^k$ that
$$
\Delta(\theta):=\left \{y\in \R^{k}: \sum_{i=1}^{k} y_i \theta_i \leq 1, y\geq 0\right\}.
$$
We start with a basic result which is essential in both Tuy and Konno's argument.
\begin{lemma}\label{l:dw0}
Let $f:\R^{k}\rightarrow \R$ be a convex function and $\theta>0$.
Then,
$$
\max \left\{f(y): y\in \Delta(\theta)\right\}=\max \left\{f\left(\bf{0}\right),f(\theta^{-1}_1 e_1),\ldots,f(\theta^{-1}_ke_k)\right\}.
$$
\end{lemma}
\begin{proof}
When $\theta>0$, $\Delta(\theta)$ is a simplex with vertices $\left\{{\bf{0}}, \theta^{-1}_1 e_1,\ldots,\theta^{-1}_k e_k\right\}$. The statement follows from the fact that  the maximum value of $f$ is attained at  one vertex of $\Delta(\theta)$.
\qed
\end{proof}

When $\theta\geq 0$ contains zero elements, $\Delta(\theta)$ is not a simplex and it is not enough to directly examine the function value at the vertices of $\Delta(\theta)$.
To handle this issue, we present below a generalization of~\Cref{l:dw0} which covers all the case{s} of $\theta\geq 0$.
\begin{lemma}\label{l:dw}
Let $f:\R^{k}\rightarrow \R$ be a convex function and $\gamma \geq f(\bf{0})$. Assume that for any $i\in [k]$, there is $\eta>0$ such that $f(\eta^{-1}e_i)\leq \gamma$. 
Define the vector $\theta\in \R_+^k$ as follows:
$$
\theta_i:=\inf\{\eta> 0: f(\eta^{-1} e_i)\leq \gamma \} ,\enspace i=1,\ldots,k.
$$
Then
$$
\max \left\{f(y): y\in \Delta(\theta)\right\}\leq \gamma.
$$
\end{lemma}

\begin{proof}
Let $\rho>0$. For each $i\in [k]$, if $f(\rho^{-1}e_i)> \gamma$, then $f(\eta^{-1} e_i)>\gamma$ for any $0<\eta \leq \rho$. This follows from the convexity of $f$ and $f(\bzero) \leq \gamma$. Hence for any $\rho>\theta_i$ we must have $f(\rho^{-1}e_i)\leq \gamma$.

Fix any $y\in \Delta(\theta)$.
For any $t\geq \max((\sum_{i=1}^k y_i)^2,1)$,  we have $\theta + \frac{1}{t}\mathbf{1} > \mathbf{0}$ and
\begin{equation*}
    \begin{aligned}
        &\sum_{i=1}^k\left(\theta_i + \frac{1}{t}\right)\left(1-\frac{1}{\sqrt{t}}\right)y_i  \\
        & =
        \left(1 - \frac{1}{\sqrt{t}}\right) \left(\sum_{i=1}^k \theta_i y_i + \frac{1}{t}  \sum_{i=1}^k y_i \right)\\
        & \leq \left(1 - \frac{1}{\sqrt{t}}\right) \left(1 + \frac{1}{t}  \sum_{i=1}^k y_i \right) \\
        &\leq
 \left(1 - \frac{1}{\sqrt{t}}\right) \left(1 + \frac{1}{\sqrt{t}}  \right)\leq 1,
    \end{aligned}
\end{equation*}
hence $\left(1 -
\frac{1}{\sqrt{t}} \right) y \in \Delta\left(\theta + \frac{1}{t} \mathbf{1}\right)$.
By~\Cref{l:dw0} we know that for sufficiently large $t>0$:
$$
f\left(\left(1-\frac{1}{\sqrt{t}}\right) y\right) \leq  \max \left( f(\bzero), \max_{i=1,\ldots,k} f\left(\frac{t}{t\theta_i+1}e_i\right )\right)  \leq \gamma.
$$
Hence, $f(y)\leq \gamma$ by the continuity of $f$.
\qed
\end{proof}

Tuy~\cite{Tuy1964concave} proposed to use the following value
\begin{equation}
    \label{a:posgewTuy}
    \begin{aligned}
        {\max  \left\{ \Phi(y) : y\in \Delta(\theta)\right\}  }
    \end{aligned}
\end{equation}
as an upper bound of $\max  \left\{ \Phi(y) :   y\in \cF\cap  \Delta(\theta)\right\}$. Let {$0< \delta<\nu_R-\nu$} and define
\begin{equation}\label{eq:tuytaui}
   {\tau_i:= \inf\left\{\eta > 0: \Phi(\eta^{-1} e_i) \leq \nu_R- {\delta}\right\},\enspace  i=1,\ldots,k.}
\end{equation}
It can be checked that
\begin{align}\label{a:theta}
    \tau_i=\frac{Q_{ii}}{-d_i+\sqrt{d^2_i+Q_{ii}\left( \nu_R-\delta-\nu\right)}},\enspace i=1,\dots, k.
\end{align}
 {Note that $\tau_i$ in~\eqref{a:theta} is well-defined for any $\delta<\nu_R-\nu$. If $Q_{ii}=0$ for some $i$, then $\tau_i=0$.  }
Applying~\Cref{l:dw} with $f=\Phi$ and $\gamma=\nu_R-\delta$, we obtain
$$
\max  \left\{ \Phi(y) :   y\in \cF\cap  \Delta(\tau)\right\} \leq \max  \left\{ \Phi(y) : y\in \Delta(\tau)\right\} \leq \nu_R-\delta.
$$
Thus the cut  $\{y \in \R^{k}: \tau^\top y \geq 1\}$ is a valid cut, known as Tuy's cut.

Next we recall the method proposed by Konno in~\cite{konno1976cutting},
for improving any valid cut including in particular Tuy's cut.  Given Tuy's cut $\tau$ defined in~\eqref{a:theta}, we next  deal with  the following program:
\begin{equation}
    \label{eq:caQPcuttau}
    \begin{aligned}
   \phi^*_{\tau,\theta}:= \max_{y\in \R^{k}} \quad &   y^T Q y + 2 d^\top y + \nu  \\
        \textrm{s.t.} \quad &  F^\top y\leq w\\&   y\geq 0\\
        & \tau^\top y \geq 1\\
        & \theta^\top y\leq 1.
    \end{aligned}
\end{equation}
Recall that  $\cF_\tau$ is the reduced region
$$
\cF_\tau=\cF\cap \{y\in \R^k: \tau^\top y\geq 1\},
$$
which is assumed to be nonempty (otherwise the reference value problem is solved).
Konno~\cite{konno1976cutting} proposed to use
\begin{align}\label{a:barphiko}
    \bar\phi^K_{\tau,\theta}:=
    \max\left\{\tilde y^\top Q  y+ d^\top \tilde y+ d^\top  y+\nu:  y\in \Delta(\theta), \tilde y\in \cF_\tau \right\}
\end{align}
as an upper bound of $\phi^*_{\tau,\theta}$.
The key observation made by Konno is that,
\begin{equation}\label{a:barphiko_g_form}
\bar\phi^K_{\tau,\theta}=\max \left\{ g(y): y\in \Delta(\theta)\right\},
\end{equation}
where $g:\R^k\rightarrow \R$ is a convex function defined by:
$$
g(y):=\max\{\tilde y^\top Q  y+ d^\top \tilde y+ d^\top  y+\nu: \tilde y\in \cF_\tau\}.
$$

Next, we define the vector $\theta_K$ which formulates Konno's cut.
The entries of $\theta_K$ for a given $0< \delta'<\nu_R-\nu$ are defined as\footnote{The perturbation parameter $\delta'$ here may be different from the parameter $\delta$ used in Tuy's cut~\eqref{eq:tuytaui}. }:
 \begin{align}\label{a:deflamdai}
 (\theta_K)_i:=\inf\left\{\eta > 0: g(\eta^{-1} e_i) \leq \nu_R- {\delta'}\right\},\enspace \forall i=1,\ldots,k.
 \end{align}
 Again, applying~\Cref{l:dw} with $f=g$ and $\gamma=\nu_R-\delta'$, together with~\eqref{a:barphiko_g_form} we obtain
 \begin{align}\label{a:ewer}
  \phi^*_{\tau,\theta}\leq \bar\phi^K_{\tau,\theta_K} \leq \nu_R-{\delta'}<\nu_R.
\end{align}
The cut $\{y\in \R^{k}: \theta_K^\top y\geq 1\}$ is a valid cut, which we will refer to as Konno's cut.

\begin{remark}
\label{rem:compute_alpha_as_LP}
Konno~\cite[Thm 3.3]{konno1976maximization} showed that the computation of each $(\theta_K)_i$ can be done by solving
 an LP program; see~\Cref{sec:konnocut} for details.
 \end{remark}

We next present a lemma which allows us to argue that Konno's cut is generally deeper than Tuy's cut for a particular choice of $\delta'$.
\begin{lemma}\label{l:posdf}
Without loss of generality, suppose that
$\tau_1=\cdots=\tau_\ell=0$ and {$\tau_{\ell+1}>0,\ldots,\tau_{k}>0$} for some $\ell\in \{0,\ldots,k\}$.
Define:$$
y^{i}\in \arg\max \{ \left(\tau_i^{-1} Q_i+d\right)^\top y: y\in \cF_\tau\}, \enspace i=\ell+1,\ldots,k,
$$
and
\begin{equation}\label{eq:omega}
\omega=\max \left\{  \Phi(y^{i}): i=\ell+1,\ldots,k\right\}.
\end{equation}
Then
\begin{equation}\label{a:wedfrre}
(\theta_K)_i=0,\enspace i=1,\ldots,\ell,
\end{equation}
and
\begin{equation}\label{eq:sewee}
g(\tau_i^{-1}e_i)\leq \frac{\nu_R+\omega}{2},\enspace i=\ell+1,\ldots,k.
\end{equation}
\end{lemma}
We defer the proof to~\Cref{sec:proofof}. Let us  elaborate on the utilization of~\Cref{l:posdf}.
If $\omega \geq \nu_R$, then the reference value problem~\eqref{a:possddf} is solved. Otherwise, as long as
\begin{align}\label{a:deltap}
\delta' \leq \frac{\nu_R-\omega}{2},
\end{align}
we know
from~\Cref{l:posdf} that
$$
g(\tau_i^{-1}e_i) \leq \nu_R-\frac{\nu_R-\omega}{2}\leq \nu_R-\delta',\enspace \forall i=\ell+1,\ldots,k.
$$
Then from~\Cref{a:deflamdai} we deduce that
$\theta_K\leq \tau$, i.e.,  Konno's cut is deeper than Tuy's cut.

In the remaining of this section,
we will present a new method to further improve Konno's cut.
Recall that the key idea in generating  Konno's cut is to give a computable upper bound on $\phi^*_{\tau,\theta}$ { defined in~\eqref{eq:caQPcuttau}}.
For finding deeper cuts, we  next
propose two computable upper  bounds of $\phi^*_{\tau,\theta}$ which are tighter than { Konno's bound} $\bar\phi^K_{\tau,\theta}$  {defined in~\eqref{a:barphiko}}.

\subsection{Tighter bound from {the} LP relaxation} \label{sec:lr-cut}

We start with  an estimate of  $\bar\phi^K_{\tau,\theta}$ {for any positive vector $\theta$.}
\begin{lemma}\label{l:ckonno}
     For any $\theta>0$ we have
    \begin{equation}
        \label{prob:konnoQP}
        \begin{aligned}
            \bar\phi^K_{\tau,\theta}\geq  \min \quad &  \beta+\nu \\
            \mathrm{s.t.} \quad &   Q  \leq  -\Lambda_{0}\tau^\top-\tau \Lambda_0^\top+\Lambda F^\top +F \Lambda^\top
            -\frac{1}{2}\left(d\theta^\top+\theta d^\top\right) \\
            & d   \leq \beta \theta +2\Lambda_0- 2\Lambda w \\
            & \Lambda_0\geq 0, \Lambda \geq 0.
        \end{aligned}
    \end{equation}
\end{lemma}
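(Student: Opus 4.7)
The plan is to exhibit a single feasible triple $(\beta,\Lambda_0,\Lambda)$ for the LP on the right-hand side whose objective value equals $\bar\phi^K_{\tau,\theta}$; this immediately forces the minimum to be at most $\bar\phi^K_{\tau,\theta}$. The multipliers will be assembled from the LP duals of Konno's vertex subproblems and glued together through a symmetrization step that exploits $Q=Q^\top$.

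First I would use the characterization
$\bar\phi^K_{\tau,\theta}=\max_{i\in[n-m]} g(y^{(i)})$ with $y^{(i)}:=\theta_i^{-1}e_i$ recalled in~\eqref{a:barphiko}, and, for each $i$, write
$g(y^{(i)})=\max_{\tilde y\in\cR_\tau}\tilde y^\top(\theta_i^{-1}Q_i+d)+\theta_i^{-1}d_i+v$ as a linear program in $\tilde y$. Since $\cF$ is bounded (so $\cR$ and hence $\cR_\tau\subset\cR$ are bounded) and $\cR_\tau\neq\emptyset$ by assumption, this LP is feasible and bounded, so strong LP duality produces $\Lambda^{(i)}\in\R_+^m$ (dual to $F^\top\tilde y\leq w$) and $\lambda_0^{(i)}\geq 0$ (dual to $\tau^\top\tilde y\geq 1$) with
\[
F\Lambda^{(i)}-\lambda_0^{(i)}\tau\ \geq\ \theta_i^{-1}Q_i+d,\qquad (\Lambda^{(i)})^\top w-\lambda_0^{(i)}+\theta_i^{-1}d_i+v=g(y^{(i)}).
\]

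Then I would aggregate these per-vertex duals into uniform multipliers by the scaling
\[
\Lambda_{i,:}:=\tfrac12\,\theta_i\,(\Lambda^{(i)})^\top,\qquad (\Lambda_0)_i:=\tfrac12\,\theta_i\,\lambda_0^{(i)},\qquad \beta:=\bar\phi^K_{\tau,\theta}-v,
\]
which automatically yields $\Lambda\geq 0$ and $\Lambda_0\geq 0$. The vector constraint $d\leq\beta\theta+2\Lambda_0-2\Lambda w$ follows entrywise: multiplying the strong-duality identity for $g(y^{(i)})$ by $\theta_i>0$ produces $d_i=\theta_i(g(y^{(i)})-v)-2(\Lambda w)_i+2(\Lambda_0)_i$, and $\beta\geq g(y^{(i)})-v$ gives the required estimate.

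The only delicate step — and the one I expect to be the crux — is verifying the matrix inequality. Multiplying the dual feasibility inequality at index $i$ by $\theta_i>0$ and translating via the definitions of $\Lambda$ and $\Lambda_0$ gives, for every $j\in[n-m]$,
\[
2(\Lambda F^\top)_{ij}-2(\Lambda_0\tau^\top)_{ij}\ \geq\ Q_{ij}+\theta_i d_j.
\]
Swapping the roles of $i$ and $j$, using $Q_{ji}=Q_{ij}$, and averaging the two inequalities produces exactly
\[
Q_{ij}\ \leq\ (\Lambda F^\top+F\Lambda^\top)_{ij}-(\Lambda_0\tau^\top+\tau\Lambda_0^\top)_{ij}-\tfrac12(d\theta^\top+\theta d^\top)_{ij},
\]
which is the prescribed entrywise matrix constraint. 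This averaging is the one place where the symmetry of $Q$ is essential and where the aggregation weight $\tfrac12\theta_i$ is forced. With both constraints verified, $(\beta,\Lambda_0,\Lambda)$ is feasible for~\eqref{prob:konnoQP} with value $\beta+v=\bar\phi^K_{\tau,\theta}$, which is exactly the claimed inequality.
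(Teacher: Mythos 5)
Your proof is correct and follows essentially the same route as the paper's: both arguments extract nonnegative multipliers for each index $i$ from the LP $g(\theta_i^{-1}e_i)$ over $\cR_\tau$ (the paper via a Farkas--type alternative applied to the infeasible system $\{(\theta_i^{-1}Q_i+d)^\top y>\beta-\theta_i^{-1}d_i\}\cap\cR_\tau$, you via strong LP duality, which amounts to the same thing once $\cR_\tau\neq\emptyset$ and boundedness are in hand), then aggregate with the weights $\tfrac12\theta_i$ and symmetrize using $Q=Q^\top$ to obtain a feasible $(\beta,\Lambda_0,\Lambda)$ with objective $\bar\phi^K_{\tau,\theta}$. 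The $\tfrac12\theta_i$ scaling and the averaging of the inequality with its transpose are exactly the paper's steps \eqref{eq:Qfi}--\eqref{eq:l35}.
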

\begin{proof}
    Let
    $
    \beta=  \bar\phi^K_{\tau,\theta}-\nu.
    $
    In view of~\eqref{a:barphiko_g_form},
    $$
    g(\theta^{-1}_i e_{i})-\nu=\max_{y\in {\cF_\tau}}  \left(\theta^{-1}_i Q_{i}+ d\right )^\top y+\theta^{-1}_i d_i \leq \beta,\enspace \forall  i\in [k].
    $$
    Consequently, for each $i\in [k]$, the following system (on $y\in \R^{k}$) is infeasible:
    \begin{equation}\label{eq:sdsdf}
        \left\{
        \begin{aligned}
            & \left(\theta^{-1}_i Q_{i}+ d\right ) ^\top y > \beta-\theta^{-1}_i d_i \\
            & \tau^\top y \geq 1\\
            &F^\top y\leq w \\
            &y\geq 0
        \end{aligned}\right.
    \end{equation}
    By {Farkas' Lemma} and ${\cF_\tau} \neq \emptyset$, for each $i\in [k]$,  the infeasibility of~\eqref{eq:sdsdf} implies the feasibility of the following system  (on  ${u^i_0}\in \R$ and ${u^i}\in \R^{m}$):
    \begin{equation}\label{eq:esdfdf}
        \left\{
        \begin{aligned}
            &	\theta^{-1}_i Q_{i}+d  \leq  -{u^i_0} \tau+ F {u^i}  \\
            &\beta-\theta^{-1}_i d_i   \geq -{u^i_0} + ({u^i})^\top w\\
            &{u^i_0} \geq 0, {u^i} \geq 0
        \end{aligned}
        \right.
    \end{equation}
    Multiplying both sides of the inequalities by the positive number $\theta_i$,
    the system~\eqref{eq:esdfdf} can be equivalently written as
    \begin{equation}\label{eq:Qfi}
        \left\{
        \begin{aligned}
            & Q_{i}+\theta_i d  \leq  - {u^i_0} \tau + F {u^i} \\
            &\beta \theta_i - d_i   \geq -{u^i_0}  +({u^i})^\top w\\
            &{u^i_0} \geq 0, {u^i}  \geq 0
        \end{aligned}
        \right.
    \end{equation}
    The fact that the system~\eqref{eq:Qfi} is solvable for each $i\in [k]$  implies that the following system on $(\Lambda_0\in \R^{{k}}, \Lambda\in \R^{{k}\times m})$ is solvable:
    \begin{equation}\label{eq:l35}
        \left\{
        \begin{aligned}
            & Q  \leq  -\Lambda_{0}\tau^\top-\tau \Lambda_0^\top+\Lambda F^\top +F \Lambda^\top
            -\frac{1}{2}\left(d\theta^\top+\theta d^\top\right) \\
            & d   \leq \beta \theta+2 \Lambda_0- 2  \Lambda w \\
            & \Lambda_0\geq 0, \Lambda\geq 0
        \end{aligned}
        \right.
    \end{equation}
    To see this, just let $\Lambda_{0}=\frac{1}{2}\begin{pmatrix}{u_0^1}& \cdots & {u_0^k}\end{pmatrix}^\top$ and $\Lambda=\frac{1}{2}\begin{pmatrix}
        {u^1} & \cdots & {u^k}
    \end{pmatrix}^\top$  and use the fact that $Q=Q^\top$.
    \qed
\end{proof}
\begin{remark}
 In general the inequality in~\eqref{prob:konnoQP}  is strict since~\eqref{eq:l35} cannot imply~\eqref{eq:Qfi}.
\end{remark}

We next show that the lower bound of $\bar\phi^K_{\tau,\theta}$ provided by~\Cref{l:ckonno} is also an upper bound of $\phi^*_{\tau,\theta}$. For this,
consider the following  extended  LP program:
\begin{equation}
    \label{prob:konnoQP2e}
    \begin{aligned}
        \bar\phi^L_{\tau,\theta}:= \min \quad &  - {\alpha}+{q}^\top w+\beta+\nu \\
        \textrm{s.t.} \quad &  Q \leq   -\Lambda_0 \tau^\top- \tau \Lambda_0^\top+\Lambda F^\top +F\Lambda^\top +\left( \Lambda_{m+1}\theta^\top+\theta \Lambda_{m+1}^\top\right) \\
        & 2d-2\Lambda_0+2\Lambda w+2\Lambda_{m+1} \leq  -{\alpha} \tau+ F{q}  +\beta \theta\\
        & \Lambda_0 \geq 0,\Lambda\geq 0, \Lambda_{m+1} \geq 0
        ,  {\alpha} \geq 0, {q} \geq 0, \beta \geq 0.
    \end{aligned}
\end{equation}

\begin{proposition}\label{prop:IK}
    For any positive vector $\theta\in\R^{k}$, we have
    \begin{equation}
        \label{eq:comp1}
        \bar\phi^K_{\tau,\theta}  \geq\bar\phi^L_{\tau,\theta}\geq \phi^*_{\tau,\theta}.
    \end{equation}
\end{proposition}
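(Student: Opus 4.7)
My plan is to prove the two inequalities separately. For the second inequality $\bar\phi^L_{\tau,\theta}\geq \phi^*_{\tau,\theta}$, I would use an LP-duality style argument. Let $(\Lambda_0,\Lambda,\Lambda_{m+1},\alpha_0,\alpha,\beta)$ be any feasible solution of~\eqref{prob:konnoQP2e} and let $y\in \cR_\tau\cap \Delta(\theta)$. Multiply the first (matrix) constraint on both sides by $y^\top$ and $y$; the three bilinear terms on the right-hand side unfold as $-2(y^\top\Lambda_0)(\tau^\top y)$, $2(y^\top \Lambda)(F^\top y)$, $2(y^\top \Lambda_{m+1})(\theta^\top y)$, and can be bounded using $\tau^\top y\geq 1$, $F^\top y\leq w$, $\theta^\top y\leq 1$ together with $y\geq 0$ and the nonnegativity of $\Lambda_0,\Lambda,\Lambda_{m+1}$. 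This yields $y^\top Q y\leq 2y^\top(-\Lambda_0+\Lambda w+\Lambda_{m+1})$. Adding $2d^\top y$ and invoking the second constraint of~\eqref{prob:konnoQP2e} gives $y^\top Qy+2d^\top y\leq y^\top(-\alpha_0\tau+F\alpha+\beta\theta)$, which is then bounded by $-\alpha_0+\alpha^\top w+\beta$ using the same polyhedral inequalities together with $\alpha_0,\alpha,\beta\geq 0$. Taking the min over feasible $(\Lambda_0,\Lambda,\Lambda_{m+1},\alpha_0,\alpha,\beta)$ and the max over $y$ delivers the desired inequality.

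For the first inequality $\bar\phi^K_{\tau,\theta}\geq \bar\phi^L_{\tau,\theta}$, I will rely on~\Cref{l:ckonno}, which tells us that $\bar\phi^K_{\tau,\theta}$ is bounded below by the optimal value of~\eqref{prob:konnoQP}. It then suffices to show that any feasible solution $(\Lambda_0,\Lambda,\beta)$ of~\eqref{prob:konnoQP} extends to a feasible solution of~\eqref{prob:konnoQP2e} with the same objective value $\beta+v$. The natural candidate is to set $\alpha_0=0$, $\alpha=0$, keep $\Lambda_0,\Lambda,\beta$ unchanged, and choose $\Lambda_{m+1}$ so as to reconcile the two problems' constraints. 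The key observation is that the first constraint of~\eqref{prob:konnoQP} contains the extra term $-\tfrac12(d\theta^\top+\theta d^\top)$, while~\eqref{prob:konnoQP2e} allows a term $\Lambda_{m+1}\theta^\top+\theta\Lambda_{m+1}^\top$ with $\Lambda_{m+1}\geq 0$. Since $\bar x$ is a KKT point and the basis matrix $B$ was chosen so that $d\leq 0$, the choice $\Lambda_{m+1}=-d/2\geq 0$ is valid, and a direct substitution shows that both constraints of~\eqref{prob:konnoQP2e} are satisfied with objective value exactly $\beta+v$.

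The main obstacle is precisely identifying this bridging assignment $\Lambda_{m+1}=-d/2$ and recognizing that its nonnegativity relies on the KKT property $d\leq 0$ established at the beginning of~\Cref{sec:cuts}. Once this is noticed, both inequalities become bookkeeping. A brief remark at the end of the proof can flag that the bound $\bar\phi^L_{\tau,\theta}$ is in general strictly smaller than $\bar\phi^K_{\tau,\theta}$ because the extra dual multipliers $\Lambda_{m+1}, \alpha_0, \alpha$ enlarge the dual feasible set, consistent with the intuition that~\eqref{prob:konnoQP2e} is a tighter LP relaxation of~\eqref{prob:QPDelta} than Konno's bound.
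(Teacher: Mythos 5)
Your proposal is correct and follows essentially the same route as the paper's proof: the second inequality via the same weak-duality computation (sandwiching the matrix constraint between $y^\top$ and $y$, bounding the bilinear terms with $\tau^\top y\geq 1$, $F^\top y\leq w$, $\theta^\top y\leq 1$, then invoking the second constraint), and the first inequality via exactly the paper's bridging assignment $\alpha_0=0$, $\alpha=0$, $\Lambda_{m+1}=-d/2$ combined with \Cref{l:ckonno} and the choice of basis ensuring $d\leq 0$. No substantive differences to report.
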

\begin{proof}
    Any feasible solution to the optimization problem in~\eqref{prob:konnoQP} can be extended to a feasible solution of~\eqref{prob:konnoQP2e} by letting
    $$
    {\alpha}=0,\enspace {q}={\bf{0}}, \enspace \Lambda_{m+1}=-d/2.
    $$
    (Recall that $d\leq 0$.) Then we apply~\Cref{l:ckonno} to obtain the first inequality.

    Let $\Lambda_0,\Lambda, \Lambda_{m+1}, {\alpha}
    , {q} , \beta$ be feasible to~\eqref{prob:konnoQP2e}. Then for any $y\in {\cF_\tau} \cap \Delta(\theta)$, we have
    \begin{align*}
        y^\top Q y  & \leq -2y^\top \tau \Lambda_0^\top y+ 2 y^\top \Lambda F^\top y +2y^\top \Lambda_{m+1}\theta^\top y  \\&
        \leq  -2 \Lambda_0^\top y + 2 y^\top \Lambda w  +2y^\top \Lambda_{m+1}.
    \end{align*}
    Here, the first inequality follows from the first constraint of~\eqref{prob:konnoQP2e} and $y\geq 0$. The second inequality follows from the nonnegativity of $\Lambda_0 ^\top y$,  $\Lambda^\top y$ and $\Lambda_{m+1}^\top y$ and the fact that $\theta^\top y\leq 1$, $\tau^\top y \geq 1$ and $F^\top y \leq w$.
    Now we use the second constraint of~\eqref{prob:konnoQP2e} to obtain
    \begin{align*}
        y^\top Q y  & \leq -2d^\top y-{\alpha} \tau^\top y + {q}^\top F^\top y+\beta \theta^\top y\\
        & \leq -2d^\top y -{\alpha}+ {q}^\top w +\beta.
    \end{align*}
    Hence
    $$
     y^\top Q y+2d^\top y+\nu \leq -{\alpha}+{q}^\top w +\beta +\nu, \enspace \forall y \in {\cF_\tau} \cap \Delta(\theta),
    $$
    which implies  $\phi^*_{\tau,\theta}\leq  \bar\phi^L_{\tau,\theta} $.
    \qed
\end{proof}
Based on~\Cref{prop:IK},  the vector $\theta$  such that
$\bar\phi^L_{\tau,\theta}= \nu_R-\delta'$  leads to a cut deeper than {Konno's cut}.
Unfortunately, the search of such $\theta$ cannot be done by solving $k$ LP programs as for the search of $\theta_K$.
Nevertheless, we can try to improve Konno's cut through the standard bisection trick.
Specifically, let $\theta_K$ be the vector generating Konno's cut and choose some $ \theta <\theta_K$.
If $\bar\phi^L_{\tau,\theta}\leq \nu_R-\delta'$, then we get an improved cut.
Otherwise, we increase $ \theta$, for example to  $( \theta+\theta_K)/2$, and repeat until we get an improved cut.

\subsection{Deeper cut by {the} doubly nonnegative relaxation} \label{sec:dnnr-cut}
In this subsection, we show that a deeper cut than Konno's cut can be achieved by {the} DNN relaxation.
The main results are summarized in~\Cref{thm:phi-val-comp}.

We begin by rewriting~\eqref{eq:caQPcuttau} into the form of~\eqref{prob:original_max_qp0}.
Let $\hat F=\begin{pmatrix} F & \theta & -\tau \end{pmatrix}$ and $\hat w=\begin{pmatrix} w\\ 1\\ -1\end{pmatrix}$ so that~\eqref{eq:caQPcuttau} is written as
\begin{equation}
    \label{prob:konnoQP34}
    \begin{aligned}
        \phi^*_{\tau,\theta}=  \max \quad & y^\top Q y + 2 d^\top y +\nu  \\
        \textrm{s.t.}
        \quad & \hat F^\top y  \leq \hat w \\
        \quad & y \geq 0 .
    \end{aligned}
\end{equation}
Now~\eqref{prob:konnoQP34} takes the same form as \eqref{prob:original_max_qp0}, thus all the results on the DNN relaxation of~\eqref{prob:original_max_qp0} presented in~\Cref{sec:DNN} can be directly adapted to the DNN relaxation of~\eqref{prob:konnoQP34}.
Denote by
$
\bar\phi^D_{\tau,\theta}
$
the DNN relaxation value of~\eqref{prob:konnoQP34}, i.e.,
\begin{equation}
        \label{prob:SQP3Shor3ae}
        \begin{aligned}
            \bar\phi^D_{\tau,\theta}:=\max_{\substack{Y\in \cS^{k}\\ y\in \R^{k}}} \enspace &
            \<Q, Y> + 2 d^\top y  +\nu  \\
            \textrm{s.t.}
            \quad & \hat F^\top y\leq \hat w  \\
            \quad & y\geq 0 \\
            \quad & \hat F^\top Y \hat F-\hat w y^\top \hat F-\hat F^\top y \hat w^\top +\hat w\hat w^\top \geq 0\\
            \quad & Y \geq 0
            \\ \quad & \hat w y^\top -\hat F^\top Y \geq 0\\
            \quad &\begin{pmatrix}
                Y & y \\ y^\top & 1
            \end{pmatrix}  \succeq 0.
        \end{aligned}
    \end{equation}
    Note that $\bar\phi^D_{\tau,\theta}=\bar \Phi(\cF_{\tau}\cap \Delta(\theta))$.
Using directly~\Cref{prop:sfsdfsdf} we get the following upper bound of  $ \bar\phi^D_{\tau,\theta}  $.
\begin{corollary}
    \label{cor:sfsdfsdf}
    If there exists $\Lambda_0 ,\Lambda_{m+1} \in \R_{+}^{{k}}$, $\Lambda\in \R_{+}^{{k}\times m}$ and $P\in \R_{+}^{{k}\times {k}}\cap {\mathcal{S}^{k}}$  such that
    \begin{align}\label{a:cDe2}
        -Q-P-\Lambda_0 \tau^\top- \tau \Lambda_0^\top+\Lambda F^\top + F\Lambda^\top+\Lambda_{m+1}\theta^\top +\theta\Lambda_{m+1}^\top \succeq 0.
    \end{align}
    Then
    $$
    \bar\phi^D_{\tau,\theta}  \leq {\nu}+2 \max_{y\in {\cF_\tau} \cap \Delta(\theta)}  \left(d-\Lambda_0+ \Lambda w+\Lambda_{m+1} \right) ^\top y.
    $$
\end{corollary}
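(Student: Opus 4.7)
The proof plan is to invoke \Cref{prop:sfsdfsdf} directly with a suitable substitution. The key observation is that~\eqref{prob:konnoQP34} has exactly the same structural form as~\eqref{eq:caQP}, except that the matrix $F$ and the vector $w$ are replaced by the augmented matrix $\hat F=\begin{pmatrix}F & \theta & -\tau\end{pmatrix}\in\R^{(n-m)\times(m+2)}$ and the augmented vector $\hat w=\begin{pmatrix}w\\ 1\\ -1\end{pmatrix}\in\R^{m+2}$, respectively. Since all the results in~\Cref{sec:DNN} were derived purely from the form of the problem (and not from any specific assumption on $F$, $w$), \Cref{prop:sfsdfsdf} applies verbatim to the DNN relaxation~\eqref{prob:SQP3Shor3ae} of~\eqref{prob:konnoQP34}.

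The plan, then, is as follows. First, apply \Cref{prop:sfsdfsdf} to~\eqref{prob:konnoQP34} with a nonnegative multiplier matrix $\hat\Lambda\in\R_+^{(n-m)\times(m+2)}$ and an auxiliary $P\in\R_+^{(n-m)\times(n-m)}\cap\cS^{n-m}$ satisfying
\[
-Q-P+\hat\Lambda\hat F^\top+\hat F\hat\Lambda^\top\succeq 0.
\]
The conclusion delivered by \Cref{prop:sfsdfsdf} is then
\[
\bar\phi^D_{\tau,\theta}\leq v+\max\left\{2\bigl(d+\hat\Lambda\hat w\bigr)^\top y:\hat F^\top y\leq\hat w,\ y\geq 0\right\}.
\]
Second, perform the obvious block decomposition of $\hat\Lambda$: write $\hat\Lambda=\begin{pmatrix}\Lambda & \Lambda_{m+1} & \Lambda_0\end{pmatrix}$ with $\Lambda\in\R_+^{(n-m)\times m}$ and $\Lambda_{m+1},\Lambda_0\in\R_+^{n-m}$. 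A routine computation gives
\[
\hat\Lambda\hat F^\top=\Lambda F^\top+\Lambda_{m+1}\theta^\top-\Lambda_0\tau^\top,\qquad \hat\Lambda\hat w=\Lambda w+\Lambda_{m+1}-\Lambda_0,
\]
so the semidefiniteness condition above coincides exactly with~\eqref{a:cDe2}, and the linear functional of the maximization coincides with $2(d-\Lambda_0+\Lambda w+\Lambda_{m+1})^\top y$.

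Third, translate the polyhedral constraint $\hat F^\top y\leq\hat w$ with $y\geq 0$ into its concrete meaning: the first $m$ rows give $F^\top y\leq w$, the next row gives $\theta^\top y\leq 1$ (i.e.\ $y\in\Delta(\theta)$), and the last row gives $-\tau^\top y\leq -1$ (i.e.\ $\tau^\top y\geq 1$). Combined with $y\geq 0$, this is precisely $y\in\cR_\tau\cap\Delta(\theta)$. Substituting this back yields the claimed inequality.

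There is no real obstacle, since everything reduces to the algebraic verification above; the only thing one needs to be careful about is the sign of the last column of $\hat F$. Because the inequality $\tau^\top y\geq 1$ is encoded through the \emph{negated} column $-\tau$, the nonnegativity of the corresponding block $\Lambda_0$ of $\hat\Lambda$ produces precisely the minus signs $-\Lambda_0\tau^\top-\tau\Lambda_0^\top$ in~\eqref{a:cDe2} and $-\Lambda_0$ in the linear functional, matching the statement of the corollary exactly.
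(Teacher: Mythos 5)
Your proposal is correct and follows exactly the paper's own route: the paper's proof of \Cref{cor:sfsdfsdf} is a one-line appeal to \Cref{prop:sfsdfsdf} applied to~\eqref{prob:konnoQP34} with the augmented data $\hat F$, $\hat w$, and your block decomposition $\hat\Lambda=\begin{pmatrix}\Lambda & \Lambda_{m+1} & \Lambda_0\end{pmatrix}$ together with the sign bookkeeping for the $-\tau$ column is precisely the algebra the paper leaves implicit. Nothing is missing.
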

\begin{proof}
    {Let $\hat \Lambda={[ \Lambda \enspace \Lambda_{m+1} \enspace \Lambda_0]}\geq 0$.
    Then~\eqref{a:cDe2} can be written as
    $$
    -Q-P+\hat \Lambda \hat F^\top +\hat F \hat \Lambda^\top \succeq 0.
    $$
    Applying~\Cref{prop:sfsdfsdf}, we know that
    $$
    \begin{aligned}
    \bar \phi^D_{\tau,\theta} & \leq \nu+2 \max  \left\{ \left(d+  \hat\Lambda  \hat w \right) ^\top y: \hat F^\top y\leq \hat w,\enspace y\geq 0\right\}
    \\& ={\nu}+2 \max_{y\in {\cF_\tau} \cap \Delta(\theta)}  \left(d-\Lambda_0+ \Lambda w+\Lambda_{m+1} \right) ^\top y
    .
\end{aligned}
    $$
    }
    \qed
\end{proof}

\begin{theorem}
    \label{thm:phi-val-comp}
    For any positive vector $\theta\in\R_{++}^{{k}}$, we have
    \begin{equation}
        \label{eq:comp2}
        \bar\phi^K_{\tau,\theta}  \geq \bar\phi^L_{\tau,\theta} \geq \bar\phi^D_{\tau,\theta} \geq  \phi^*_{\tau,\theta}.
    \end{equation}
\end{theorem}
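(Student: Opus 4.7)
My plan is to verify the inequality chain one link at a time, noting that two of the three links have already been done.

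The first inequality $\bar\phi^K_{\tau,\theta}\geq \bar\phi^L_{\tau,\theta}$ is already contained in Proposition~\ref{prop:IK}, so I would simply cite it. Likewise, the final inequality $\bar\phi^D_{\tau,\theta} \geq \phi^*_{\tau,\theta}$ follows from the fact that the DNN program~\eqref{prob:SQP3Shor3ae} is a relaxation of the QP~\eqref{prob:konnoQP34}: for any feasible $y$ of~\eqref{prob:konnoQP34}, the rank-one lift $(Y,y)=(yy^\top,y)$ is feasible for~\eqref{prob:SQP3Shor3ae} and delivers the same objective value $y^\top Qy+2d^\top y+v=\phi(y)$. This is the same construction used implicitly in the proof of~\Cref{lemma:equiv-shor}.

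The substantive step is the middle inequality $\bar\phi^L_{\tau,\theta}\geq \bar\phi^D_{\tau,\theta}$. I would take an arbitrary feasible tuple $(\Lambda_0,\Lambda,\Lambda_{m+1},\alpha_0,\alpha,\beta)$ for~\eqref{prob:konnoQP2e} and use it to instantiate~\Cref{cor:sfsdfsdf}. The first (componentwise) constraint of~\eqref{prob:konnoQP2e} reads $Q\leq M$ where
\[
M:=-\Lambda_{0}\tau^\top-\tau\Lambda_{0}^\top+\Lambda F^\top+F\Lambda^\top+\Lambda_{m+1}\theta^\top+\theta\Lambda_{m+1}^\top.
\]
Setting $P:=M-Q$, this $P$ is nonnegative entrywise by the constraint and symmetric because $Q$ and every rank-two piece in $M$ is symmetric. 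A direct substitution gives $-Q-P+M=0\succeq 0$, so the PSD hypothesis~\eqref{a:cDe2} of~\Cref{cor:sfsdfsdf} holds; the corollary then yields
\[
\bar\phi^{D}_{\tau,\theta}\leq v+2\max_{y\in\cR_\tau\cap\Delta(\theta)}\bigl(d-\Lambda_0+\Lambda w+\Lambda_{m+1}\bigr)^\top y.
\]
Now I would feed the second constraint of~\eqref{prob:konnoQP2e}, namely $2(d-\Lambda_0+\Lambda w+\Lambda_{m+1})\leq -\alpha_0\tau+F\alpha+\beta\theta$, into this bound: for any $y\in\cR_\tau\cap\Delta(\theta)$, because $y\geq 0$, $\tau^\top y\geq 1$, $F^\top y\leq w$, $\theta^\top y\leq 1$ and $\alpha_0,\alpha,\beta\geq 0$, we obtain
\[
2\bigl(d-\Lambda_0+\Lambda w+\Lambda_{m+1}\bigr)^\top y\leq -\alpha_0+\alpha^\top w+\beta.
\]
Hence $\bar\phi^{D}_{\tau,\theta}\leq v-\alpha_0+\alpha^\top w+\beta$; minimizing the right side over feasible tuples of~\eqref{prob:konnoQP2e} gives exactly $\bar\phi^{L}_{\tau,\theta}$.

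The main conceptual obstacle is precisely the sign bookkeeping in that middle step: one has to notice that the componentwise constraint in~\eqref{prob:konnoQP2e} can be strengthened to a PSD relation by absorbing the slack into a nonnegative symmetric $P$, and one must keep track of the factor of $2$ in~\Cref{cor:sfsdfsdf} so that the linear-inequality constraint on $d-\Lambda_0+\Lambda w+\Lambda_{m+1}$ matches up with the $\max$ expression. Once these alignments are made, no further computation is needed.
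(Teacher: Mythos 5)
Your proposal is correct and follows essentially the same route as the paper: cite \Cref{prop:IK} for the first inequality, absorb the componentwise slack of the first constraint of~\eqref{prob:konnoQP2e} into a nonnegative symmetric $P$ so that~\eqref{a:cDe2} holds with equality, apply \Cref{cor:sfsdfsdf}, and then bound the resulting linear maximum via the second constraint to recover $-\alpha_0+\alpha^\top w+\beta$. The only difference is that you spell out the rank-one-lift justification of $\bar\phi^D_{\tau,\theta}\geq\phi^*_{\tau,\theta}$, which the paper leaves implicit as a standard relaxation property.
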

\begin{proof}
    The first inequality follows from~\Cref{prop:IK}.
    Let $\Lambda_0,\Lambda, \Lambda_{m+1}
    , {q},\alpha, \beta$ be feasible to {the program}~\eqref{prob:konnoQP2e}. Let
    $$
    P:=-Q-\Lambda_0 \tau^\top- \tau \Lambda_0^\top+\Lambda F^\top+ F\Lambda^\top+\Lambda_{m+1}\theta^\top +\theta\Lambda_{m+1}^\top.
    $$
Then~\eqref{a:cDe2} holds trivially. Further,
    by the first inequality constraint in~\eqref{prob:konnoQP2e}, $P\geq 0$.
    \Cref{cor:sfsdfsdf} allows to deduce:
    $$
    \bar\phi^D_{\tau,\theta}  \leq {\nu}+2 \max_{y\in {\cF_\tau} \cap \Delta(\theta)}  \left(d-\Lambda_0+ \Lambda w+\Lambda_{m+1} \right) ^\top y.
    $$
    Now we use the second constraint of~\eqref{prob:konnoQP2e} to obtain
    \begin{align*}
        &	2 \max_{y\in {\cF_\tau} \cap \Delta(\theta)}  \left(d-\Lambda_0+ \Lambda w+\Lambda_{m+1} \right) ^\top y  \\
        & \leq \max_{y\in {\cF_\tau} \cap \Delta(\theta)}  \left(-{\alpha} \tau +F{q}+\beta\theta\right) ^\top y\\
        & \leq  -{\alpha}+{q}^\top w +\beta.
    \end{align*}
    {Here, the first inequality follows from the second constraint of~\eqref{prob:konnoQP2e}, and the second inequality follows from $\tau^\top y \geq 1$, $F^\top y \leq w$ and $\theta^\top y \leq 1$ for any $y\in \cF_\tau \cap \Delta(\theta).$}
    It follows that
    $$
    \bar\phi^L_{\tau,\theta}\geq \bar\phi^D_{\tau,\theta}.
    $$
    \qed
\end{proof}
\Cref{thm:phi-val-comp} provides a foundation of the new cutting plane method that we are about to propose.
In order to search for a vector $\theta$ such that~\eqref{eq:caQPcuttau} holds, we choose a perturbation parameter {$0<\delta<\nu_R-\nu$} and start by computing {the vector $\tau$ defined in~\eqref{a:theta}. Then we compute $\omega$ defined by~\eqref{eq:omega}. If $\omega\geq \nu_R$, the reference value problem is solved. Otherwise we choose $0<\delta'\leq \frac{\nu_R-\max\left(\omega,\nu\right)}{2}$  and compute $\theta_K$ defined through~\eqref{a:deflamdai}} such that
$$
\bar \phi^K_{\tau,\theta_K}{\leq}\nu_R-{{\delta'}}.
$$
Then we choose a factor $\eta\in (0,1)$ and test whether $\theta=\eta\theta_K$ satisfies
\begin{align}\label{a:sdsefad}
\bar \phi^L_{\tau,\theta}\leq \nu_R-{{\delta'}}.
\end{align}
If~\eqref{a:sdsefad} holds, then $\theta$ yields a valid cut, which is deeper than Konno's cut.  Otherwise we further check if
\begin{align}\label{a:sdsefadfsd}
\bar \phi^D_{\tau,\theta}\leq \nu_R-{{\delta'}}.
\end{align}
If~\eqref{a:sdsefadfsd} holds, then $\theta$ yields a valid cut, which is deeper than Konno's cut. The pseudo code of this process is given in~\Cref{alg:ikc}.
One can also
use the bisection trick mentioned in the end of~\Cref{sec:lr-cut}. In any case, the core theory behind is~\Cref{thm:phi-val-comp}, which guarantees that
$$
\bar \phi^D_{\tau,\theta_K}\leq \bar \phi^L_{\tau,\theta_K}  \leq \bar \phi^K_{\tau,\theta_K} {\leq \nu_R-{\delta'}}<\nu_R,
$$
and provides a way to improve Konno's cut.

\section{Algorithm} \label{sec:cuttingplane}

In this section we summarize our method  in~\Cref{alg:CuP} and~\Cref{alg:CuP-global}. For the future reference, we name our   algorithm as \texttt{QuadProgCD}, where the letter ``C" refers to the cutting plane method and the letter ``D" refers to the doubly nonnegative relaxation.

The algorithm has two variants: 1. \texttt{QuadProgCD-R} (\Cref{alg:CuP}) is designed for solving the reference value problem~\eqref{a:possddf}, where we use the suffix \texttt{-R} to refer to the reference value problem;
2. \texttt{QuadProgCD-G} (\Cref{alg:CuP-global}) is designed for solving the global optimization problem \eqref{prob:original_max_qp0}, where the suffix \texttt{-G} refers to the global optimization mode.

\subsection{Algorithm for the reference value problem}
\Cref{alg:CuP} takes as input {the positive semidefinite matrix $Q\in \cS^k$, the vector $d\in \R^k$ and the scalar $\nu\in \R$ which define the quadratic objective function $\Phi$,  the matrix $F\in \R^{k\times m}$ and the vector $w\in \R^m$ which define the feasible region $\cF$},  the reference value $\nu_R$, a factor $\eta\in(0, 1)$, and a perturbation parameter $\delta>0$.
The algorithm solves the reference value problem~\eqref{a:possddf} by returning a lower bound $\underline v$ of $\Phi^*(\cF)$ and an upper bound $\bar v$ of $\Phi^*(\cF)$ such that $\nu_R\notin (\underline v, \bar v]$.

\Cref{alg:CuP} starts by searching for a KKT {vertex} $\bar y$ of~\eqref{prob:original_max_qp0} (\cref{algline:search-vertex}).  There are many ways to achieve a KKT {vertex} and we adopt the mountain climbing algorithm proposed by Konno in~\cite{konno1976maximization}. The details are recalled in~\Cref{sec:KKT}.

If  $\Phi(\bar y)$ is larger than or equal to $\nu_R$, then we obtain a lower bound $\underline v$ such that $\nu_R\leq \underline v\leq \Phi^*(\cF)$.
Otherwise, we compute the DNN  bound $\bar \Phi(\cF)$.
If the latter is strictly smaller than $\nu_R$, then we obtain an upper bound $\bar v$ such that $\nu_R> \bar v$ (note that if $\cF = \varnothing$, we would also reach this case because $\bar{\Phi}(\cF) = -\infty$).
Otherwise, we  proceed to the cutting plane step (from~\cref{algline:minimal-prog} to~\cref{algline:cutting-plane-step-end}).

In~\cref{algline:minimal-prog} we {make a suitable change of coordinate so that $\bar y$ is transformed to the origin point $\textbf{0}$ and compute the parameters $(Q,d,\nu,F,w)$ under the new basis}. For details of this step please check~\Cref{sec:mp}.
In~\cref{algline:updatedeltanu} we make sure that $\delta$ is smaller than $\nu_R-\nu$.
In~\cref{algline:cutting-plane-step-begin} we compute the vector $\tau$ which defines Tuy's cut. From~\cref{algline:phiKtautau-check-begin} to~\cref{algline:phiKtautau-check-end} we check whether $\omega$ defined in~\eqref{eq:omega} is {larger} than $\nu_R$. If {yes}, then we obtain a lower bound $\underline v$ such that $\nu_R\leq \underline v\leq \Phi^*(\cF)$. Otherwise, we {update $\delta$ so that it does not increase  and satisfies~\eqref{a:deltap} in~\cref{algline:updatedelta} . Then} we proceed to generate a deeper cut (from~\cref{algline:deeper-cut-begin} to~\cref{algline:deeper-cut-end}).

We provide two options for the generation of a cut deeper than Tuy's cut. The first option is to use Konno's cut as recalled  in~\Cref{subsec:konno}. The second option is to generate an even deeper cut  using the theory that we developed in~\Cref{sec:dnnr-cut}. The two algorithms for option I and II are described in~\Cref{sec:konnocut} and~\Cref{sec:bisec-cut}.

In the while loop, the lower bound $\underline v$ is nondecreasing and the upper bound $\bar v$ is nonincreasing.
When we solve the reference value problem~\eqref{a:possddf},  the while loop breaks when either $ \nu_R\leq  \underline v$ or $\nu_R> \bar v$ so that the reference value problem is solved.

\begin{algorithm}[htbp]
    \caption{\texttt{QuadProgCD-R}}
    \begin{algorithmic}[1]
        \Require {  $Q\succeq 0$, $d\in \R^k$, $\nu \in \R$, $F\in \R^{k\times m}$, $w\in \R^m$,} reference value $\nu_R\in \R$, factor $\eta\in (0,1)$, perturbation $\delta>0$.
  \State $\underline{v} \gets -\infty, \bar{v} \gets \infty$;
        \While{$\underline v < \nu_R\leq \bar v $}
        \State { $\bar y\leftarrow$ \texttt{Search\_of\_KKT\_Point}($Q,d,\nu,F,w$);} \Comment{See~\Cref{alg:MC}.} \label{algline:search-vertex}
        \If{{$\bar y ^\top Q \bar y+2 d^\top \bar y+\nu \geq \nu_R$}}
        \State {$\underline{v}\gets \bar y ^\top Q \bar y+2 d^\top \bar y+\nu $ } and break; \Comment{Terminate with $\underline{v} \geq \nu_R$.}
        \EndIf
        \State $\underline v\gets \max\left(\underline v, \bar y ^\top Q \bar y+2 d^\top \bar y+\nu \right)$; \Comment{Keep the best lower bound value ever reached.}
        \State $t\leftarrow $ { \texttt{DNN\_Bound}($Q,d,\nu,F,w$);} \label{algline: compute-dnn-bound}\Comment{Compute the DNN bound by solving~\eqref{prob:SQP3Shor}.}
        \State {$\bar v\leftarrow\min\left(\bar v, t\right)$; }\Comment{Keep the best upper bound value ever obtained.}
        \If{$\bar v < \nu_R$}
             \State  $\bar v\leftarrow  \max(\bar v, \nu_R-\delta) $ and break; \Comment{Terminate with $\bar{v} < \nu_R$.}
        \EndIf
        \State $(Q, d, \nu, F,w)\leftarrow$ { \texttt{Change\_of\_Coordinate}}($\bar y, Q, d, \nu, F,w$);\Comment{See~\Cref{alg:minimalprogram}.} \label{algline:minimal-prog}
 \State  { $\delta \leftarrow \min (\delta, (\nu_R-\nu)/2)$; }\label{algline:updatedeltanu}
  \For{$i=1,\ldots,k$}  \State $\tau_i \leftarrow\frac{Q_{ii}}{-d_i+\sqrt{d^2_i+Q_{ii}\left( \nu_R-\delta-\nu\right)}}$ \Comment{Tuy's cut.} \label{algline:cutting-plane-step-begin}
   \EndFor \label{algline:endfor}
  \For{$i=1,\ldots,{k}$} \label{algline:phiKtautau-check-begin}
  \If{$\tau_i>0$} \label{algline:iftau}
  \State $y^i\leftarrow\argmax\{ (\tau^{-1}_i Q_{i}+d)^\top y: F^\top y\leq w, \tau^\top y \geq 1, y\geq 0\}$;
  \State $\underline v \leftarrow   \max \left( \underline v, (y^i)^\top Q y^i+2 d^\top y^i+\nu \right)$;
  \If{$ \underline v \geq \nu_R$}
  \State break;\Comment{Terminate with  $\underline v\geq  \nu_R$.}
  \EndIf
  \EndIf
  \EndFor \label{algline:phiKtautau-check-end}
   \State $\delta \leftarrow \min (\delta, (\nu_R-\underline v)/2)$;  \label{algline:updatedelta}
  \State $\theta_K\leftarrow$ \texttt{Konno\_Cut}($Q,d, \nu,F,w, \nu_R,\tau,\delta$); \label{algline:deeper-cut-begin}\Comment{See~\Cref{alg:Konnocut}.}
        \State \textbf{Option I}: $\theta\leftarrow \theta_K$ ; \Comment{Use Konno's cut.}
        \State \textbf{Option II}: $\theta\leftarrow$ \texttt{DNN\_Cut}($Q,d, \nu,F,w, \nu_R,\tau,\theta_K, \eta,\delta$);\Comment{See~\Cref{alg:ikc}.}
        \label{algline:dnncut}
        \State { $F\leftarrow \begin{pmatrix}F & -\theta\end{pmatrix} $, $w\leftarrow \begin{pmatrix}w \\ -1\end{pmatrix}$;} \Comment{Add a valid cut.}
         \label{algline:cutting-plane-step-end} \label{algline:deeper-cut-end}
        \EndWhile
        \Ensure upper bound  $\bar v$, lower bound $\underline v$ such that $\nu_R\notin (\underline v, \bar v]\enspace$.
    \end{algorithmic}
    \label{alg:CuP}
\end{algorithm}

\subsection{Algorithm for global solution of the concave QP problem}\label{sec:quadprogcd-global}

Now we present \texttt{QuadProgCD-G} as in~\Cref{alg:CuP-global} for globally solving the concave QP problem~\eqref{prob:original_max_qp0}.
\texttt{QuadProgCD-G} follows a similar structure as~\Cref{alg:CuP}, with some differences that we detail below.

The input of~\Cref{alg:CuP-global} includes a gap tolerance $\epsilon > 0$.
The output is a lower bound $\underline v$ and an upper bound $\bar v$ of $\Phi^*(\cF)$ such that
\begin{equation}\label{eq:relgapdef} \dfrac{\bar{v} - \underline{v}}{\max\{\epsilon, |\underline{v}|\}} \leq \epsilon.\end{equation}
Here, we take the maximum of $\epsilon$ and $|\underline{v}|$ in the denominator to handle the pathological case $|\underline{v}| = 0$.

 There is  no longer a prescribed reference value $\nu_R$. Instead,
 \Cref{alg:CuP-global} uses a dynamic reference value, which is set to be the lower bound ${ \underline{v}+\epsilon^2}$ and  updated  throughout the iterations (\cref{algline:simul-update-1,algline:simul-update-2}).
The termination criteria are also modified to ensure global convergence (\cref{algline:termnation-criterion-g,algline:break-criterion-g,algline:break-criterion-3}).

Another difference with~\Cref{alg:CuP} is that in~\Cref{alg:CuP-global}  the perturbation parameter  $\delta$ is set to be zero.
This is because~\Cref{alg:CuP-global} uses the dynamic reference value $\nu_R = \underline{v}{+\epsilon^2}$.
As a result, there is no harm to cut off any solution with {an} objective value equal to $\nu_R$, since we must have recorded some points with the same objective value $\underline{v}$ beforehand.
Tuy's cut is modified accordingly (\cref{algline:Tuycut2}). Comparing with~\cref{algline:cutting-plane-step-begin} of~\Cref{alg:CuP}, we see that the term $\nu_R-\delta-\nu$ is replaced with $ \underline v+\epsilon^2-\nu$. This is because  $\nu_R={\underline v}+\epsilon^2$ and $\delta=0$ in~\Cref{alg:CuP-global}.  In this case, we cut the region with {an} objective value bounded by $\underline v+\epsilon^2$, which is consistent with our termination criteria~\eqref{eq:relgapdef}.

Finally, the lower bound $\underline v$ returned by~\Cref{alg:CuP-global} corresponds to the objective value of  a certain feasible solution of~\eqref{prob:original_max_qp0}.
The corresponding solution can be added to the output if needed.

\begin{algorithm}[htbp]
    \caption{\texttt{QuadProgCD-G}}
    \begin{algorithmic}[1]
    \Require { $Q\succeq 0$, $d\in \R^k$, $\nu \in \R$, $F\in \R^{k\times m}$, $w\in \R^m$,} relative gap tolerance $\epsilon>0$, factor $\eta\in (0,1)$.
    \State $\underline{v} \gets -\infty; \bar{v}\gets \infty$;
        \While{$\bar v -\underline v > \epsilon \max\left(|\underline v|,\epsilon\right)$} \label{algline:termnation-criterion-g}
        \State {$\bar y\leftarrow$ \texttt{Search\_of\_KKT\_Point}($Q,d,\nu,F,w$);} \Comment{See~\Cref{alg:MC}.}
        \State $\underline v \gets \max\left(\underline v, \bar y^\top Q \bar y+ 2d^\top \bar y +\nu\right)$; \Comment{Keep the best lower bound value ever reached.} \label{algline:simul-update-1}
        \State $t\leftarrow $ {\texttt{DNN\_Bound}($Q,d,\nu,F,w$)};  \Comment{Compute the DNN bound by solving~\eqref{prob:SQP3Shor}.} \label{algline: compute-dnn-bound2}
         \State {$\bar v\leftarrow\min\left(\bar v, t\right)$; }\Comment{Keep the best upper bound value ever reached.}
        \If{{$\bar v \leq \underline{v}+ \epsilon \max\left(|\underline v|,\epsilon\right)$}} \label{algline:break-criterion-g}
            \State  {$\bar v\leftarrow   \max\left(\bar v, \underline v+\epsilon^2\right)$} and break; \Comment{Terminate.}
        \EndIf \label{algline:endifg}
        \State $(Q, d, \nu, F,w)\leftarrow$ \texttt{Change\_of\_Coordinate}($\bar y, Q, d, \nu, F,w$);\Comment{See~\Cref{alg:minimalprogram}.}\label{algline:reduce}
  \For{$i=1,\ldots,k$} \State $\tau_i \leftarrow\frac{Q_{ii}}{-d_i+\sqrt{d^2_i+Q_{ii}\left(\underline v+ \epsilon^2-\nu\right)}}$; \label{algline:Tuycut2} \Comment{Tuy's cut.}
  \EndFor
  \For{$i=1,\ldots,{k}$}
  \If{$\tau_i>0$}
  \State $y^i\leftarrow\argmax\{ (\tau^{-1}_i Q_{i}+d)^\top y: F^\top y\leq w, \tau^\top y \geq 1, y\geq 0\}$;
  \State $\underline{v} \leftarrow   \max \left( \underline v, (y^i)^\top Q y^i+2 d^\top y^i+\nu \right)$; \label{algline:simul-update-2}
  \If{ {$\bar v-\underline v \leq \epsilon \max(|\underline v|, \epsilon)$}} \label{algline:break-criterion-3}
  \State { $\bar v \leftarrow  \max\left(\bar v,\underline v +\epsilon^2\right)$ and } break;  \Comment{Terminate.}
  \EndIf\label{algline:enfif-3}
  \EndIf
  \EndFor
  \State $\theta_K\leftarrow$ \texttt{Konno\_Cut}($Q,d, \nu,F,w, \underline{v}+\epsilon^2, \tau, 0$); \Comment{See~\Cref{alg:Konnocut}.}
    \State \textbf{Option I}: $\theta\leftarrow \theta_K$ ; \Comment{Use Konno's cut.}
    \label{algline:konnocut2}
    \State \textbf{Option II}: $\theta\leftarrow$ \texttt{DNN\_Cut}($Q,d, \nu,F,w, \underline{v}+\epsilon^2, \tau, \theta_K,\eta,0$);\Comment{See~\Cref{alg:ikc}.}
    \label{algline:dnncut2}
    \State { $F\leftarrow \begin{pmatrix}F & -\theta\end{pmatrix} $, $w\leftarrow \begin{pmatrix}w \\ -1\end{pmatrix}$;}\Comment{Add a valid cut.}
    \EndWhile
        \Ensure Upper bound  $\bar v$, lower bound $\underline v$ such that $0\leq \bar v-\underline v\leq \epsilon \max \left(|\underline v|,\epsilon\right)$.
    \end{algorithmic}
    \label{alg:CuP-global}
\end{algorithm}

\subsection{Computation of the DNN bound}
The DNN bound in~\cref{algline: compute-dnn-bound} of~\Cref{alg:CuP} and in~\cref{algline: compute-dnn-bound2} of~\Cref{alg:CuP-global} can be replaced by any valid upper bound of $\Phi^*(\cF)$. In fact, as discussed in~\Cref{sec:vdnnbi}, it is impossible to compute the exact value of $\bar\Phi(\cF)$, which corresponds to the optimal value of the SDP problem~\eqref{prob:SQP3Shor}. For high-dimensional problems, even an approximation with high accuracy  of $\bar\Phi(\cF)$ is not accessible due to {the} memory issue or {the} time limit. We rely on~\Cref{prop:inexcs} for obtaining a valid upper bound of $\Phi^*(\cF)$ from any approximate SDP solution. Similarly, when we compute the DNN cut in~\cref{algline:dnncut} of~\Cref{alg:CuP} and in~\cref{algline:dnncut2} of~\Cref{alg:CuP-global}, it suffices to find a valid upper bound of  the optimal value of the SDP problem~\eqref{prob:SQP3Shor3ae} and we again rely on~\Cref{prop:inexcs}. In this way,~\Cref{alg:CuP} and~\Cref{alg:CuP-global} are open to a wide range of SDP solvers, including in particular those that are able to provide medium accuracy solutions for high dimensional problems. This flexibility is crucial to make \texttt{QuadProgCD} competitive when the problem size increases.
The price to pay is the degradation of the quality of the upper bound. Nevertheless, as we will show in the next section through extensive numerical tests, the gain appears to outweigh  the loss and \texttt{QuadProgCD} shows superior performance compared with existing solvers especially for problems with large size.

\subsection{Convergence of the algorithm}\label{sec:conv}

As the paper is already lengthy, we  have decided to leave the study of the convergence properties of~\Cref{alg:CuP} and~\Cref{alg:CuP-global} to future research. Nonetheless, we would like to provide some brief comments on this challenging problem.

Firstly, it is important to note that the lower bound $\underline{v}$ is only guaranteed to be non-decreasing, and the upper bound $\bar{v}$ is only guaranteed to be non-increasing during the iterations. In other words, while adding a valid cut always leads to a strictly smaller feasible region, it may not always result in a strict improvement of the upper or lower bound. Interested readers may refer to the numerical experiments in~\Cref{fig:time-lbub} in the later section, which demonstrate the stagnation of both lower and upper bounds during the iterations.

In fact,  finite convergence of a ``pure cutting plane method" is hard to be ensured. The best-known result is the finite convergence of Tuy's cut, which is established under certain conditions; see~\cite[Theorem V.2]{horst2013global}. The condition basically requires that the cuts successively added should be sufficiently large.
Without imposing such conditions, even the convergence of Konno's cut is unknown, according to Chapter V Section 4.2 of~\cite{horst2013global}.  As an alternative, many papers propose the modification of the pure cutting plane method by incorporating cone-splitting, enumerative elements, or branch-and-bound techniques to ensure finite convergence; see~\cite{jacobsen1981convergence,Porembski01,VanTuy80}.

\section{Numerical Experiments}\label{sec:experiments}

In this section, we evaluate the numerical performance of the proposed algorithms \texttt{QuadProgCD-R} (\Cref{alg:CuP}) and \texttt{QuadProgCD-G} (\Cref{alg:CuP-global}), one for solving the reference value problem~\eqref{a:possddf} and the other for solving globally~\eqref{prob:original_max_qp0}.
For the sake of simplicity, when the problem type is clear,  we  use \texttt{QuadProgCD} to refer to both of the two variants.
We compare \texttt{QuadProgCD} with two commercial solvers \texttt{CPLEX}  (version 22.1.0) \footnote{https://www.ibm.com/hk-en/analytics/cplex-optimizer} and \texttt{Gurobi} (version 9.5.0) \footnote{https://www.gurobi.com}, and also with an open-source solver \texttt{quadprogIP}\footnote{https://github.com/xiawei918/quadprogIP} which is a nonconvex QP solver proposed very recently in \cite{xia2020globally}.

Our algorithm \texttt{QuadProgCD} is implemented and run with \texttt{MATLAB R2021a} (version 9.10.0.1851785).
In numerical experiments, the interfaces of \texttt{CPLEX} and \texttt{Gurobi} are called through a Python script. Unless otherwise specified,
all the tests are performed on a Windows laptop with Intel(R) Core(TM) i7-8750H CPU 2.20 GHz, 6 cores and 16GB memory.
The code of \texttt{QuadProgCD} is available at
\url{https://github.com/tianyouzeng/QuadProgCD}.

\subsection{Problem instances}

We use both real and synthetic data for experiments.
All the instances used for the comparison are uploaded  at the above address.
We describe them in detail below.

\subsubsection{Real data}

This set of data has a real application background in computational biology.  In \cite{zhao2020new}, the authors proposed an approach to detect new genome or protein sequence based on some known sequence data. A key step in their approach is to solve a set of reference value problems  which can be described as
\begin{equation}
    \label{prob:numexp_max_qp0}
    \begin{aligned}
        \max_{x\in \R^{n}}  &  \qquad   x^\top H x + 2p^\top x  \\
        ~ \textrm{s.t.} &\qquad   Ax = b \\ &\qquad x \geq 0,
    \end{aligned}
\end{equation}
where $H \in \cS^n$ is positive semidefinite, $p \in \R^n$, $A \in \R^{m \times n}$ with $m < n$, and $b \in \R^m$.
By elimination of variables, program~\eqref{prob:numexp_max_qp0} can be reduced into the form of~\eqref{prob:original_max_qp0} with $k = n - m$.
Due to the very special application background, the matrices and vectors $H$, $p$, $A$, $b$ for describing the concave QP problem~{\eqref{prob:numexp_max_qp0}} are  dense. Moreover, all the entries in   $p$, $A$, $b$ are nonnegative and $H$ is a completely positive matrix.

The data is divided into two groups: one group of dimension $n=100$ and the other group of dimension $n=841$. For both groups, the number of rows of $A$ is $m=22$.
Therefore in the reduced form~\eqref{prob:original_max_qp0}, we have $k = 78$ and $k = 819$ for each group respectively.
The reference value is fixed to be $n(n+1)(2n+1)/6$ for all the instances.
For convenience, we call the two groups BioData100 and BioData841 respectively.
For BioData100, there are 317,458 instances, while for BioData841 we have only 3 instances.

\subsubsection{Synthetic data}

We next consider QP in the format of~\eqref{prob:numexp_max_qp0} with randomly generated synthetic data.
The matrices and vectors required to define the program are generated as follows.
For $k_1,k_2 \in \mathbb{N}$ and $a, b\in \R$, we denote by $\mathcal{U}(k_1, k_2,a,b)$ a random matrix with $k_1$ rows and $k_2$ columns such that each entry is a uniform random variable in $[a,b]$ and entries are mutually independent.
We first fix the data dimension $n$, then we generate the feasible region $\cF := \{x: Ax = b, x \geq 0\}$ by the following steps:
\begin{enumerate}
    \item Choose  $m$ uniformly randomly from the integers in $[0.1n,0.5n]$;
    \item Generate $A\sim \cU(n,m,-20,20)$;
    \item Generate  $x_0 \sim \cU(n,1,0,1)$;
    \item Let $b = Ax_0/\|x_0\|$. If $\cF$ is bounded, remove redundant rows of $A$ if necessary and terminate. Otherwise go to step 1.
\end{enumerate}
Such a procedure guarantees that $\cF$ is feasible and bounded so that  Assumption~\ref{ass:bound_and_interior} is satisfied. The objective function $\Phi$ is generated as follows:
\begin{enumerate}
    \item[5.] Generate $U\sim \cU(n,n,-1,1)$,  $p\sim \cU(n,1,-10,10)$;
    \item[6.] Generate  $h \sim \cU(n,1,0,1)$.
    Let $D_0=\diag(h)$ and $H_0 = UD_0U^\top$;
    \item[7.] Generate $\alpha \sim \cU(1,1,10,11)$.
    Set $H=n\alpha H_0/\|H_0\|$ and $D = n\alpha D_0 / \|H_0\|$.
\end{enumerate}
Note that the  matrices and vectors generated in this way are dense.

The synthetic data
is divided into seven groups: CQMAX20, CQMAX50,   CQMAX100,  PCQMAX20, PCQMAX50, PCQMAX100 and PCQMAX500.  The integer number in the suffix refers to the dimension $n$ of the instance.
  Instances with prefix CQMAX are generated following the above procedure. Instances with prefix PCQMAX are generated with a small difference in step 2 and 5:   $A\sim \cU(n,m,0,20)$, $U\sim \cU(n,n,0,1)$ and $p\sim \cU(n,1,0,10)${, so that the generated instances have positive entries for each matrix or vector.}

 For problems of dimension $n\leq 100$, we solve {their corresponding reduced} concave QP problem~\eqref{prob:original_max_qp0} by \texttt{CPLEX} or \texttt{Gurobi} to get the value $\Phi^*(\cF)$ and  generate $\nu_R$ in a neighborhood of $\Phi^*(\cF)$.  For problem of dimension 500, as we have no access to  $\Phi^*(\cF)$ using \texttt{CPLEX} or \texttt{Gurobi},  we  set $\nu_R$ to be a random number larger than some known lower bound of $\Phi^*(\cF)$.

\subsection{Numerical results}

This subsection is divided into three parts.
In the first part (\Cref{subsec:numexp-ref-val-prob} and~\Cref{subsec:sfsdfsdf}), we present numerical results for solving the reference problem with~\Cref{alg:CuP}. In the second part (\Cref{subsec:gsosssw}), we show the numerical results for the global solution of~\eqref{prob:original_max_qp0} with~\Cref{alg:CuP-global}. In the third part (\Cref{subsec:comakc}), we demonstrate the difference between option I (Konno's cut) and option II (DNN cut) in~\Cref{alg:CuP-global}.

Unless otherwise specified, we let $\eta=1/2$ and $\delta=10^{-6}$.
The displayed computational time are all measured in seconds.
The SDP problems are solved  with either \texttt{MOSEK} or \texttt{SDPNAL+}.
A valid upper bound based on the  solution returned by the SDP solver   is then computed using the formula established in~\Cref{prop:inexcs}.

\subsubsection{Reference value problems with real data}
\label{subsec:numexp-ref-val-prob}

In this section, we report the computational results of \texttt{QuadProgCD-R} (\Cref{alg:CuP}) for solving the reference value problem arising from the new genome or protein detection problem.
Recall that we have two groups of instances: BioData100 with 317,458 instances  and BioData841 with 3 instances.

For BioData841, the test results are summarized in~\Cref{tab:refval-biodata841}.
\texttt{CPLEX} and \texttt{Gurobi} both failed to solve these three instances within \textbf{1000} seconds, while our algorithm \texttt{QuadProgCD} managed to solve them in less than \textbf{40} seconds each. Due to the large dimension $n=841$,   \texttt{MOSEK} fails to terminate in reasonable time limit for the inner SDP problems and we had to {adopt} \texttt{SDPNAL+} for solving the inner SDP problems.

\ExplSyntaxOn
\NewDocumentCommand { \getmin } {  }
  {
    \clist_gset:Nx \g_tmpa_clist {\csvcoliii, \csvcolv, \csvcolvii, \csvcolix}
    \clist_sort:Nn \g_tmpa_clist
      {
        \fp_compare:nNnTF {##1} > {##2}
          { \sort_return_swapped: }
          { \sort_return_same: }
      }
    \tl_gset:Nx \g_tmpa_tl { \clist_item:Nn \g_tmpa_clist {1} }
  }
  \NewDocumentCommand { \getmintwo } {  }
  {
    \clist_gset:Nx \g_tmpa_clist {\csvcolii, \csvcoliii}
    \clist_sort:Nn \g_tmpa_clist
      {
        \fp_compare:nNnTF {##1} > {##2}
          { \sort_return_swapped: }
          { \sort_return_same: }
      }
    \tl_gset:Nx \g_tmpa_tl { \clist_item:Nn \g_tmpa_clist {1} }
  }
\NewDocumentCommand { \mynum } { m }
  {
    \fp_compare:nNnTF { #1 } = { \g_tmpa_tl }
      { \fontsize{7pt}{7pt}\selectfont \textbf{\num{#1}} }
      { \fontsize{7pt}{7pt}\selectfont \num{#1} }
  }
\NewDocumentCommand{ \myrelgap } { m }
{
    \str_if_eq:eeTF { #1 } { - }
   { \fontsize{7pt}{7pt}\selectfont #1 }
   { \str_if_eq:eeTF { #1 } { N/A } { \fontsize{7pt}{7pt}\selectfont #1 } { \fontsize{7pt}{7pt}\selectfont \num{#1} } }
}
\ExplSyntaxOff

\begin{table}[htbp]
    \scriptsize
    \centering
    \caption{Comparison of three algorithms on the dataset BioData841. Note that the instance with time equal to 1000 means that the corresponding solver failed to solve it within 1000 seconds.}
    \label{tab:refval-biodata841}
    \DTLloaddb{tabledata}{tables/biodata831-refval-log.csv}
    \sisetup{round-mode=places, zero-decimal-to-integer, round-precision=2, scientific-notation=true,
    detect-all}
    \begin{tabular}{ c  c c c }
        \toprule
        Instance  & \texttt{QuadProgCD} Time & \texttt{CPLEX} Time & \texttt{Gurobi} Time  \\
        \midrule
        \csvreader[head to column names]{tables/biodata831-refval-log.csv}{}{%
            \csvcoli  & \textbf{{\csvcoliv}} & \csvcolv & \csvcolvi \\
        }
        \\[-\normalbaselineskip] \bottomrule
    \end{tabular}
    \normalsize
\end{table}

For BioData100,
we randomly selected 100 instances out of the 317,458 instances  and tested the three algorithms for  the reference value problem. The results are displayed in~\Cref{fig:refval-biodata100-sdpnalp}. Here we follow the  format used in~\cite{chen2012globally} to display the comparison.
Each square represents one instance, and its $xy$-coordinates represent the wall-clock times of the two methods. The diagonal $y=x$ line is plotted in dashed line for reference. If a square is located above the diagonal,  \texttt{QuadProgCD} solves the instance faster. The far  the square  from the diagonal, the larger  the ratio between the two wall-clock times.  We set the maximum time limit to be {1000} seconds and the dotted horizontal line represents this time limit. If a square sits on the dotted horizontal line in the left (resp. right) plot, it means that \texttt{CPLEX} (resp. \texttt{Gurobi}) is not able to determine whether $\nu_R>\Phi^*(\cF)$ or $\nu_R\leq \Phi^*(\cF)$ within 1000 seconds.

From~\Cref{fig:refval-biodata100-sdpnalp}, we observe that \texttt{CPLEX} failed to solve most of the instances within \textbf{1000} seconds. \texttt{Gurobi} seems to perform better than \texttt{CPLEX} but it also failed to solve about half of the instances within the time limit. In contrast, our algorithm \texttt{QuadProgCD} is able to solve all the  100 instances with an average wall-clock time around \textbf{10} seconds.

\begin{figure}[ht]
    \centering
    \begin{subfigure}[t]{0.45\textwidth}
        \includegraphics[width=\textwidth]{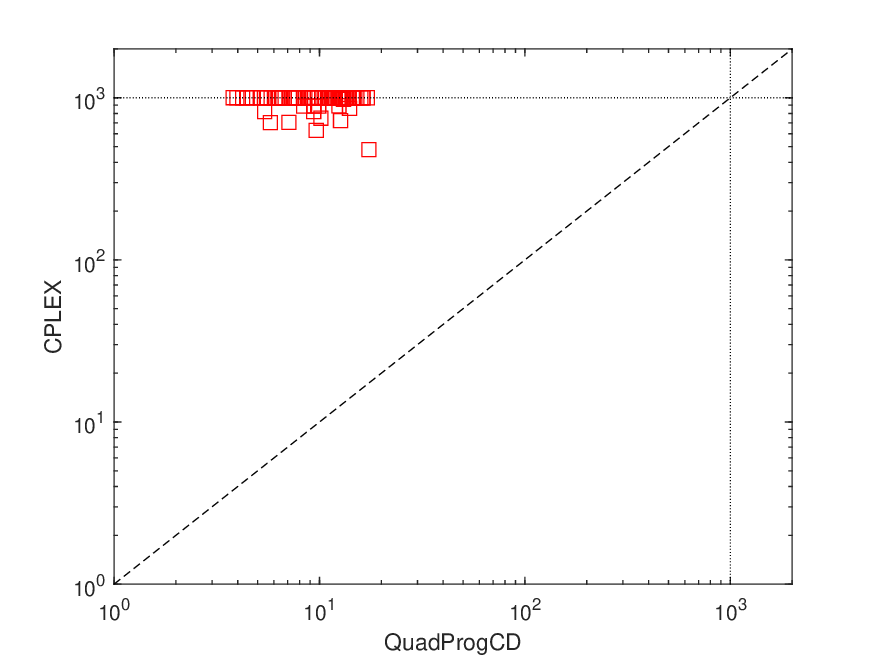}
        \caption{\texttt{QuadProgCD} vs \texttt{CPLEX} on BioData100.}
    \end{subfigure}
    \begin{subfigure}[t]{0.45\textwidth}
        \includegraphics[width=\textwidth]{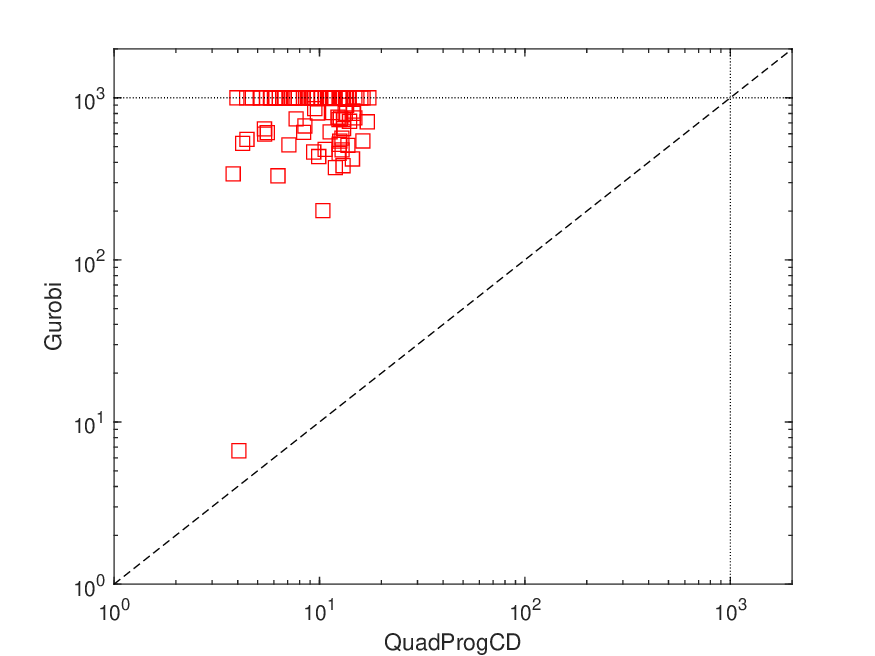}
        \caption{\texttt{QuadProgCD} vs \texttt{Gurobi} on BioData100.}
    \end{subfigure}
    \caption{Wall-clock time comparison for  solving the reference value problem on  100 instances randomly selected from BioData100, plotted in log-log scale.
   }
    \label{fig:refval-biodata100-sdpnalp}
\end{figure}

We also run our algorithm \texttt{QuadProgCD} to solve the reference value problem for all the 317,458 instances of dimension $n=100$ ($k=78$) on the HKU High Performance Computing cluster\footnote{https://hpc.hku.hk/hpc/hpc2021/} using 32 processors at the same time. The algorithm successfully solved all the instances within \textbf{3 days} (about $5\%$ of them are infeasible instances).  However, with \texttt{CPLEX} and \texttt{Gurobi}, the computational time for one single instance may take up to  several hours  and it is estimated to take \textbf{years} of computational time for solving all the  317,458 instances with these two solvers in the same computational environment.

\subsubsection{Reference value problems with synthetic data}\label{subsec:sfsdfsdf}

In this section we compare the performance of \texttt{QuadProgCD-R} (\Cref{alg:CuP}), \texttt{CPLEX} and  \texttt{Gurobi}
for the reference problem on synthetic data PCQMAX.
Each group PCQMAX20, PCQMAX50, PCQMAX100 has 100 instances. The group  PCQMAX500 has 10 instances.

\begin{figure}[ht!]
    \centering

    \begin{subfigure}[t]{0.45\textwidth}
        \includegraphics[width=\textwidth]{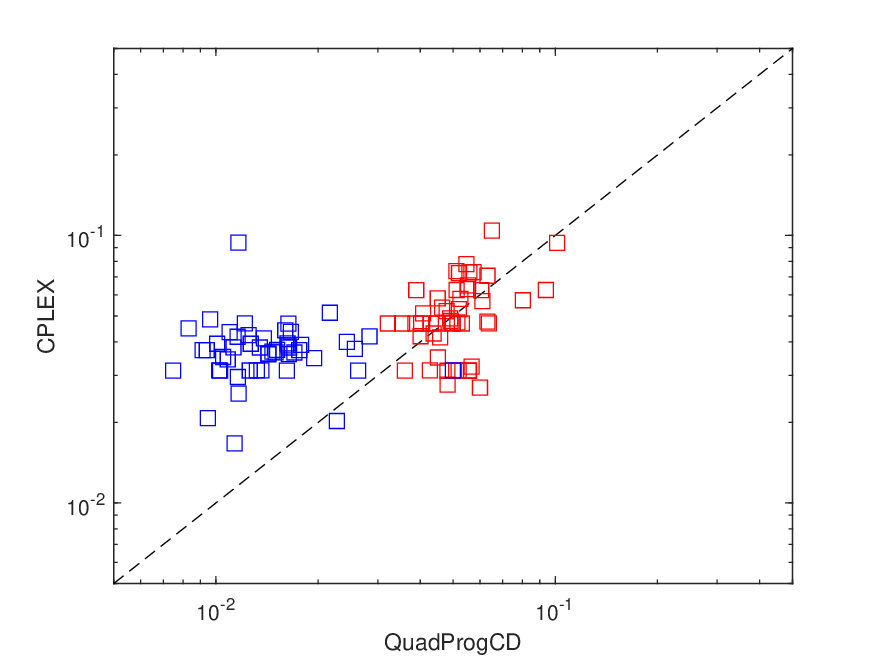}
        \caption{\texttt{QuadProgCD} vs \texttt{CPLEX} on PCQMAX20.}
    \end{subfigure}
    \begin{subfigure}[t]{0.45\textwidth}
        \includegraphics[width=\textwidth]{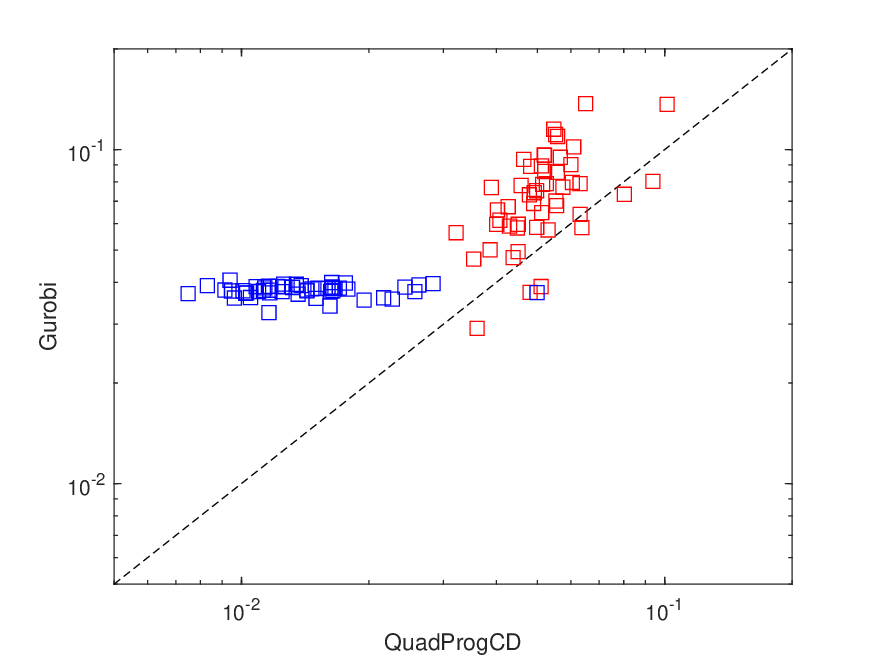}
        \caption{\texttt{QuadProgCD} vs \texttt{Gurobi} on PCQMAX20.}
    \end{subfigure}
    \hfill
    \begin{subfigure}[t]{0.45\textwidth}
        \includegraphics[width=\textwidth]{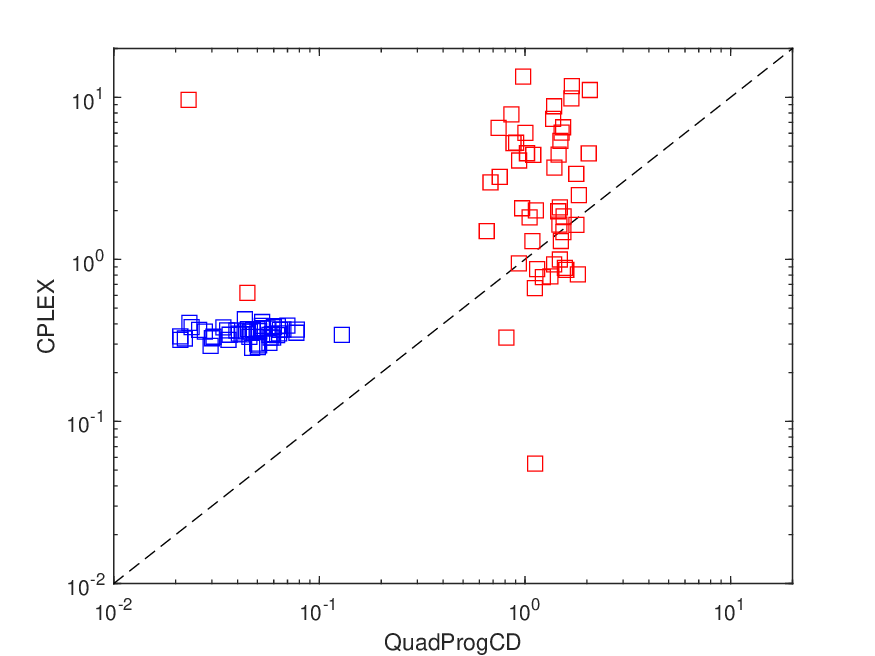}
        \caption{\texttt{QuadProgCD} vs \texttt{CPLEX} on PCQMAX50.}
    \end{subfigure}
    \begin{subfigure}[t]{0.45\textwidth}
        \includegraphics[width=\textwidth]{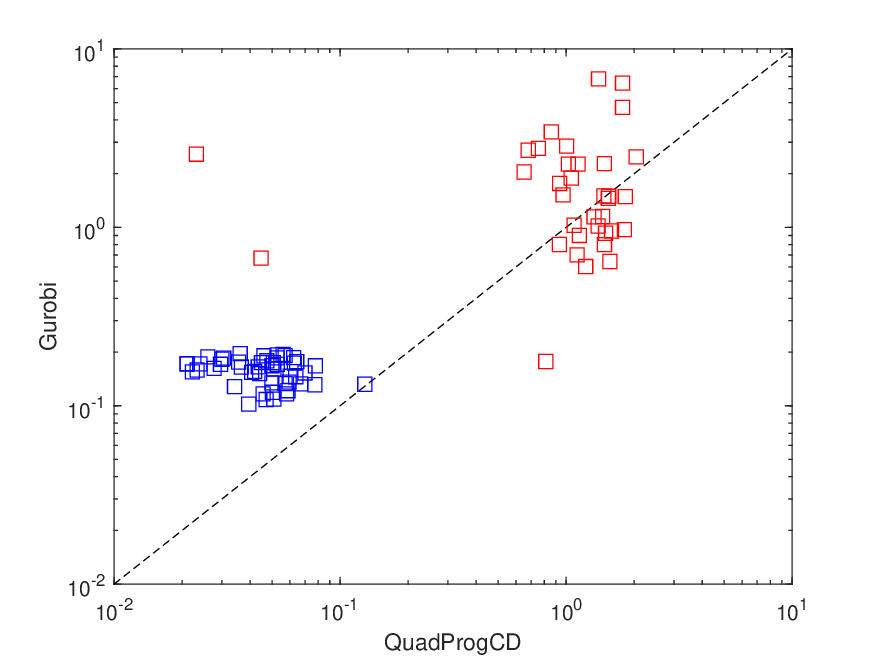}
        \caption{\texttt{QuadProgCD} vs \texttt{Gurobi} on~PCQMAX50.}
    \end{subfigure}
    \hfill
    \begin{subfigure}[t]{0.45\textwidth}
        \includegraphics[width=\textwidth]{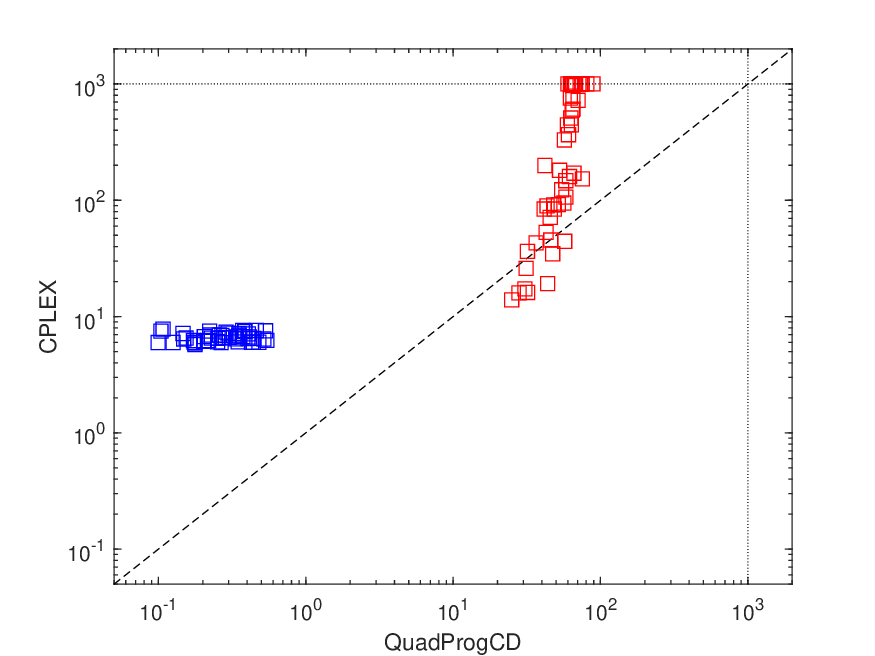}
        \caption{\texttt{QuadProgCD} vs \texttt{CPLEX} on PCQMAX100.}
  \label{subfig5:QC100M}
    \end{subfigure}
    \begin{subfigure}[t]{0.45\textwidth}
        \includegraphics[width=\textwidth]{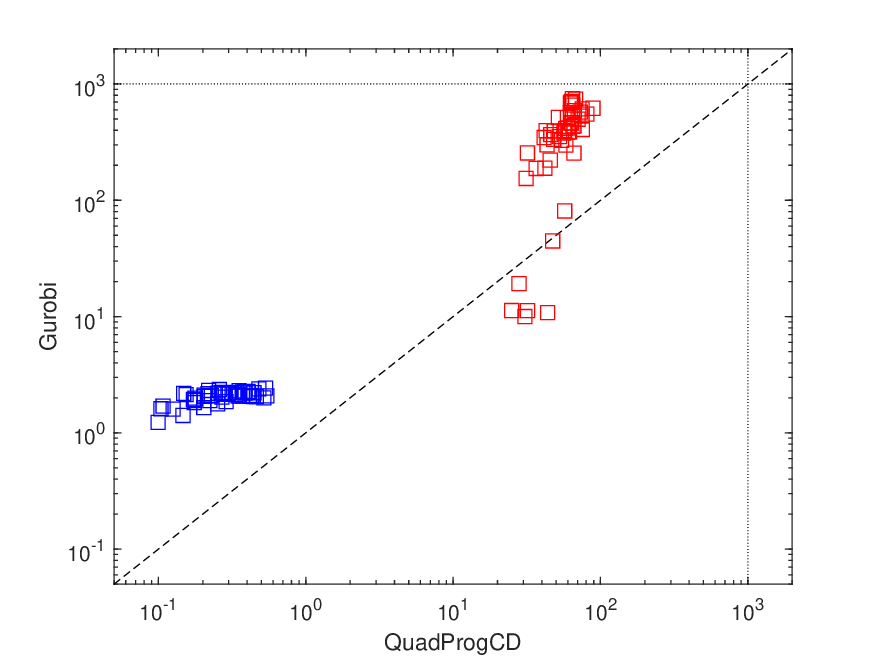}
        \caption{\texttt{QuadProgCD} vs \texttt{Gurobi} on PCQMAX100.}
  \label{subfig6:QG100M}
    \end{subfigure}
    \caption{Wall-clock time comparison for solving the reference value problem on PCQMAX, plotted in log-log scale, where the SDP problem in \texttt{QuadProgCD} is solved  by \texttt{MOSEK}.  }
    \label{fig:refval-pcqmax-mosek}
\end{figure}
For instances  of
 relatively low dimension from the dataset PCQMAX20, PCQMAX50 and PCQMAX100, it is possible to solve the doubly nonnegative relaxation using an interior point method.
The results  with \texttt{MOSEK} as the SDP solver are  summarized in~\Cref{fig:refval-pcqmax-mosek}.   The red
squares stand for instances for which {$\nu_R > \Phi^*(\cF)$}, while the blue squares stand for  instances for which {$\nu_R \leq \Phi^*(\cF)$}.
It can be observed  that \texttt{QuadProgCD} outperforms significantly \texttt{CPLEX} and \texttt{Gurobi} in most of the cases. Moreover, the superior performance of \texttt{QuadProgCD} is more and more remarkable when the dimension increases.

For instances of relatively large dimension from the dataset PCQMAX100 and PCQMAX500, we adopt \texttt{SDPNAL+} as the SDP solver.  The results are displayed in~\Cref{fig:refval-pcqmax-sdpnalp}.   By comparing~\Cref{subfig5:QC100M}-\ref{subfig6:QG100M} with~\Cref{subfig1:QC100}-\ref{subfig2:QC100}, we conclude that \texttt{QuadProgCD} with medium-accuracy SDP solver \texttt{SDPNAL+} can be faster than \texttt{QuadProgCD} with high-accuracy SDP solver \texttt{MOSEK} on the dataset PCQMAX100.
For the dataset PCQMAX500, we set the maximum time limit to be \textbf{3600} seconds. It can be seen from~\Cref{subfig3:QC100} and~\Cref{subfig4:QC100}
that
\texttt{CPLEX} and \texttt{Gurobi} failed on all the 10 instances within the time limit \textbf{3600} seconds. In contrast,  \texttt{QuadProgCD} solved all the 10 instances of dimension 500 all within \textbf{1000} seconds.

\begin{figure}[ht]
    \centering
    \begin{subfigure}[t]{0.45\textwidth}
        \includegraphics[width=\textwidth]{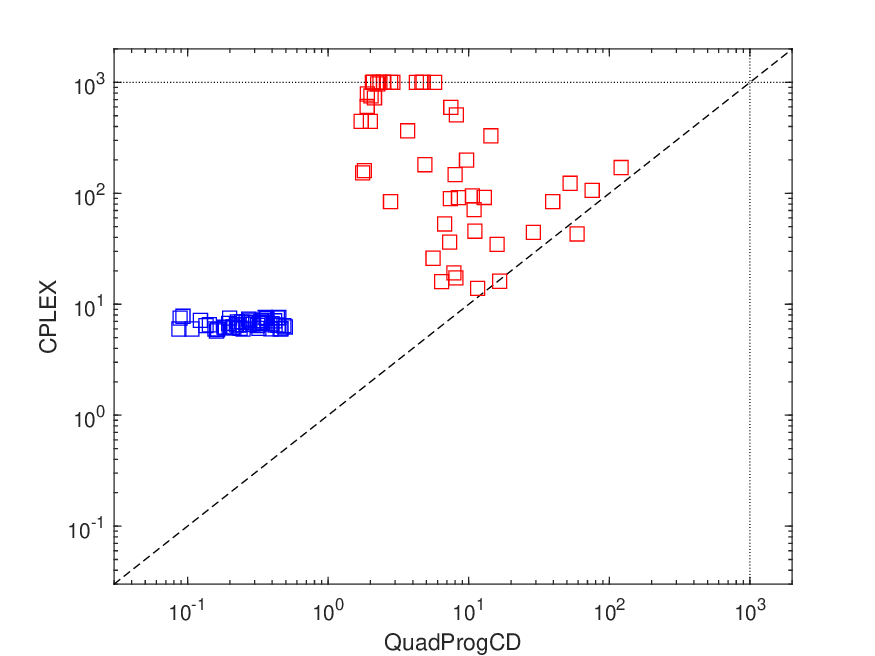}
        \caption{\texttt{QuadProgCD} vs \texttt{CPLEX} on PCQMAX100.}
  \label{subfig1:QC100}
    \end{subfigure}
    \begin{subfigure}[t]{0.45\textwidth}
        \includegraphics[width=\textwidth]{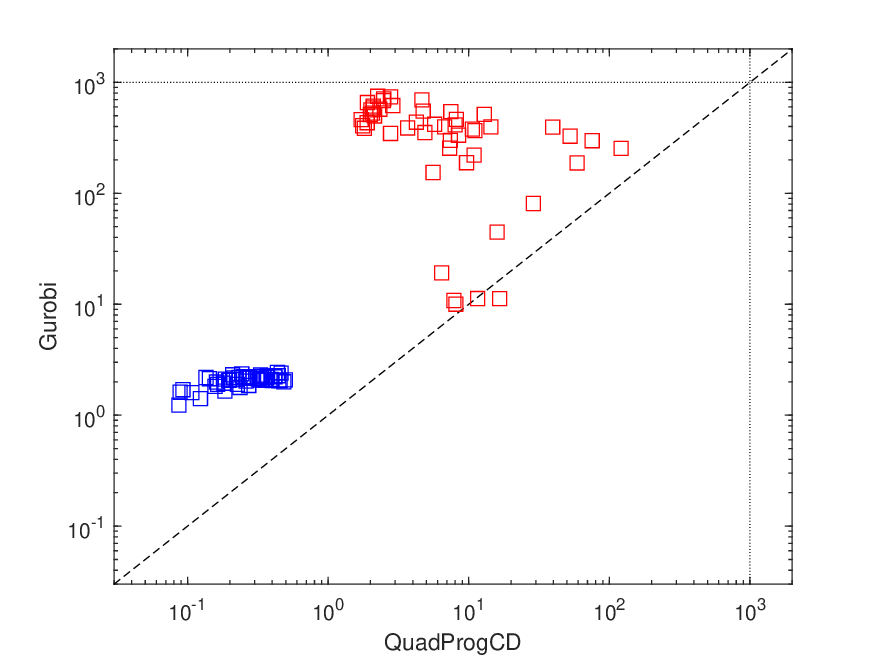}
        \caption{\texttt{QuadProgCD} vs \texttt{Gurobi} on PCQMAX100.}
    \label{subfig2:QC100}
    \end{subfigure}
    \hfill
    \begin{subfigure}[t]{0.45\textwidth}
        \includegraphics[width=\textwidth]{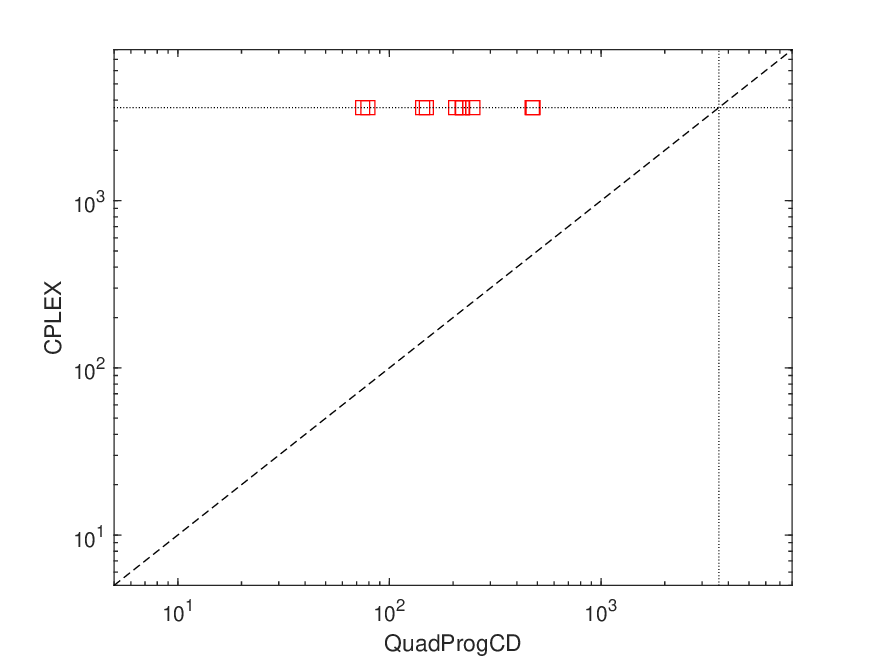}
        \caption{\texttt{QuadProgCD} vs \texttt{CPLEX} on PCQMAX500.}
   \label{subfig3:QC100}
    \end{subfigure}
    \begin{subfigure}[t]{0.45\textwidth}
        \includegraphics[width=\textwidth]{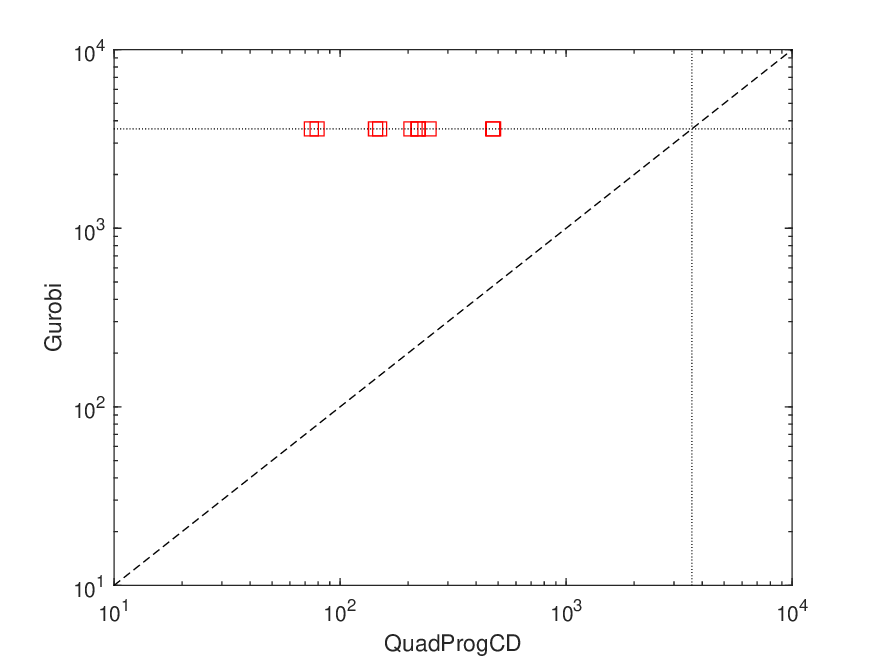}
        \caption{\texttt{QuadProgCD} vs \texttt{Gurobi} on PCQMAX500.}
   \label{subfig4:QC100}
    \end{subfigure}
    \caption{Wall-clock time comparison for solving the reference value problem on PCQMAX, plotted in log-log scale, where the SDP problem in \texttt{QuadProgCD} is solved inexactly by \texttt{SDPNAL+}. }
    \label{fig:refval-pcqmax-sdpnalp}
\end{figure}

\subsubsection{Global solution}\label{subsec:gsosssw}

In this section, we compare  \texttt{QuadProgCD-G} (\Cref{alg:CuP-global})
with three other global solvers  \texttt{CPLEX},  \texttt{Gurobi}
and \texttt{quadprogIP} for globally solving~\eqref{prob:original_max_qp0}.  The relative gap tolerance $\epsilon$ in the termination criteria~\eqref{eq:relgapdef} is set to be $\epsilon=10^{-6}$.
The test is performed on  samples from  the real dataset BioData100 and from the synthetic dataset PCQMAX20, PCQMAX50, PCQMAX100, CQMAX20, CQMAX50 and CQMAX100.

Let us examine the performance of the four algorithms on real dataset  displayed in~\Cref{tab:conv-comparison-biodata}.
We   randomly selected 20 instances from the 317,458 instances of the dataset BioData100.
\texttt{QuadProgCD} reached the gap tolerance $\epsilon=10^{-6}$ in \textbf{hundreds} of seconds for 19 instances.
\texttt{CPLEX} reached the gap tolerance $\epsilon=10^{-6}$ for only 4  instances within the time limit which is set to be \textbf{3600} seconds.
\texttt{Gurobi}  reached the gap tolerance $\epsilon=10^{-6}$ for only 6 instances within the time limit.
\texttt{quadprogIP} reached the gap tolerance $\epsilon=10^{-6}$ for only 6 instances within the time limit.

\begin{table}[htbp]
    \scriptsize
    \centering
    \caption{Comparison of four algorithms on 20 instances  from the real dataset  BioData100 for  reaching the relative gap tolerance $\epsilon=10^{-6}$.
    The shortest wall-clock time in each row is in bold font.
    A hyphen (-)  indicates that a gap  less than
    $10^{-6}$ is reached, and N/A represents that the algorithm fails to obtain an upper or lower bound before the time limit. If a gap less than $10^{-6}$ is not reached before the time limit, then the achieved gap is displayed in the column RelGap.}\label{tab:conv-comparison-biodata}
    \csvreader[before reading=\footnotesize\sisetup{round-mode=places, zero-decimal-to-integer, round-precision=2,
    detect-all}
    \begin{adjustbox}{max width=0.98\columnwidth},
    after reading=\end{adjustbox},
    tabular=c c c c c c c c c,
    table head=\toprule
    & \multicolumn{2}{c}{\texttt{QuadProgCD}} & \multicolumn{2}{c}{\texttt{CPLEX}} & \multicolumn{2}{c}{\texttt{Gurobi}} & \multicolumn{2}{c}{\texttt{quadprogIP}} \\
    \cmidrule(l{1pt}r{1pt}){2-3}
    \cmidrule(l{2pt}r{2pt}){4-5}
    \cmidrule(l{2pt}r{2pt}){6-7}
    \cmidrule(l{2pt}r{2pt}){8-9}
    Instance & RelGap & Time & RelGap & Time & RelGap & Time & RelGap & Time \\
    \midrule,
    head to column names,
    before line=\getmin,
    late after last line=\\\bottomrule]{tables/conv-overall-log-biodata.csv}{}{%
        \csvcoli & \myrelgap{\csvcolvi} & \mynum{\csvcolvii} & \myrelgap{\csvcolii} & \mynum{\csvcoliii} & \myrelgap{\csvcoliv} & \mynum{\csvcolv} & \myrelgap{\csvcolviii} & \mynum{\csvcolix}
    }
    \normalsize
\end{table}

If we examine the performance of the four algorithms on the synthetic dataset in~\Cref{tab:conv-comparison-pcqmax}, we observe that \texttt{QuadProgCD} has comparable short running time with \texttt{CPLEX} for low dimensional instances with $n = 20$.
However,  when the dimension increases to $n=100$, \texttt{QuadProgCD} outperforms the other three algorithms by  a significant margin in most of the cases.

\begin{table}[htbp]
    \centering
    \caption{Comparison of four algorithms on synthetic data PCQMAX and CQMAX for reaching the relative gap tolerance $\epsilon=10^{-6}$.
    The shortest wall-clock time in each row is in bold font.
    A hyphen (-)  indicates that a gap  less than $10^{-6}$ is reached, and N/A represents that the algorithm fails to obtain an upper or lower bound before the time limit. If a gap less than $10^{-6}$ is not reached before the time limit, then the achieved gap is displayed in the column RelGap.
    }\label{tab:conv-comparison-pcqmax}
    \csvreader[before reading=\footnotesize\sisetup{round-mode=places, zero-decimal-to-integer, round-precision=2,
    detect-all}
    \fontsize{7pt}{7pt}\selectfont
    \begin{adjustbox}{max width=0.98\columnwidth},
    after reading=\end{adjustbox},
    tabular=c c c c c c c c c,
    table head=\toprule
    &   \multicolumn{2}{c}{\texttt{QuadProgCD}} & \multicolumn{2}{c}{\texttt{CPLEX}} & \multicolumn{2}{c}{\texttt{Gurobi}} & \multicolumn{2}{c}{\texttt{quadprogIP}} \\
    \cmidrule(l{1pt}r{2pt}){2-3}
    \cmidrule(l{2pt}r{2pt}){4-5}
    \cmidrule(l{2pt}r{2pt}){6-7}
    \cmidrule(l{2pt}r{2pt}){8-9}
    Instance & RelGap & Time & RelGap & Time & RelGap & Time & RelGap & Time  \\
    \midrule,
    head to column names,
    before line=\getmin,
    after line=\vspace{0.05em},
    late after last line=\\\midrule
    ]{tables/conv-overall-log-pcqmax.csv}{}{%
        \fontsize{7pt}{7pt}\selectfont
        \csvcoli & \myrelgap{\csvcolvi} & \mynum{\csvcolvii} & \myrelgap{\csvcolii} & \mynum{\csvcoliii} & \myrelgap{\csvcoliv} & \mynum{\csvcolv} & \myrelgap{\csvcolviii} & \mynum{\csvcolix}
    }
     \csvreader[before reading=\footnotesize\sisetup{round-mode=places, zero-decimal-to-integer, round-precision=2,
    detect-all}
    \fontsize{7pt}{7pt}\selectfont
    \begin{adjustbox}{max width=0.98\columnwidth},
    after reading=\end{adjustbox},
    tabular=c c c c c c c c c,
    table head=
    Instance & RelGap & Time & RelGap & Time & RelGap & Time & RelGap & Time  \\
    \midrule,
    head to column names,
    before line=\getmin,
    after line=\vspace{0.05em},
    late after last line=\\\bottomrule
    ]{tables/conv-overall-log-cqmax.csv}{}{%
        \fontsize{7pt}{7pt}\selectfont
        \csvcoli & \myrelgap{\csvcolvi} & \mynum{\csvcolvii} & \myrelgap{\csvcolii} & \mynum{\csvcoliii} & \myrelgap{\csvcoliv} & \mynum{\csvcolv} & \myrelgap{\csvcolviii} & \mynum{\csvcolix}
    }
    \normalsize
\end{table}

Finally we demonstrate the long time behavior of the packages \texttt{QuadProgCD}, \texttt{CPLEX} and \texttt{Gurobi} on six selected instances from the real dataset BioData100.
The plot of time versus relative gap is shown in~\Cref{fig:time-relgap}. It is easy to conclude that \texttt{QuadProgCD} outperforms notably
\texttt{CPLEX} and \texttt{Gurobi} both in terms of the convergence speed and in terms of the  accuracy that can ever be reached.
\begin{figure}[htbp]
    \centering
    \begin{subfigure}[t]{0.49\textwidth}
        \includegraphics[width=\textwidth]{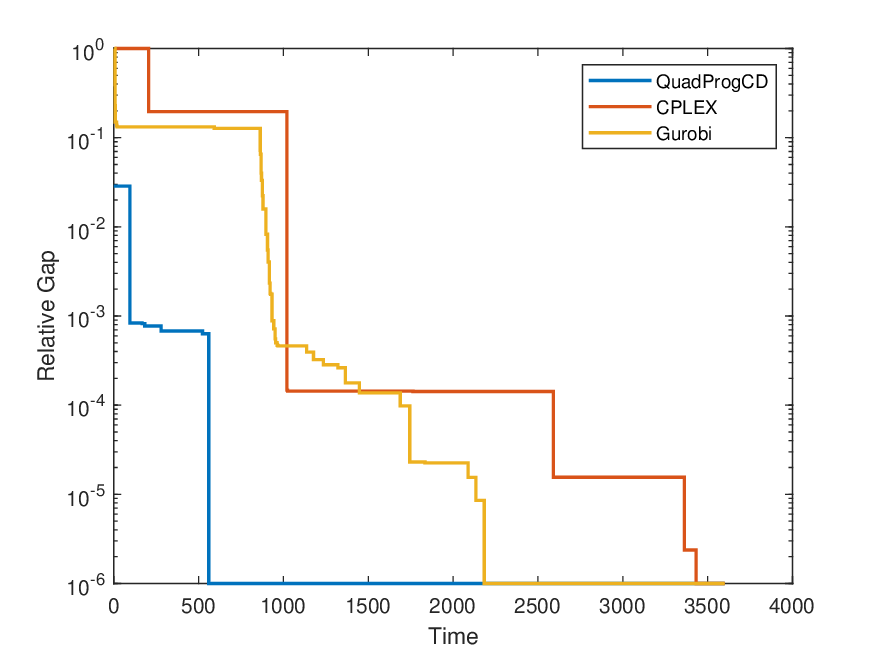}
        \caption{BioData instance 257424.}
        \label{fig:time-relgap-biodata-257424}
    \end{subfigure}
    \begin{subfigure}[t]{0.49\textwidth}
        \includegraphics[width=\textwidth]{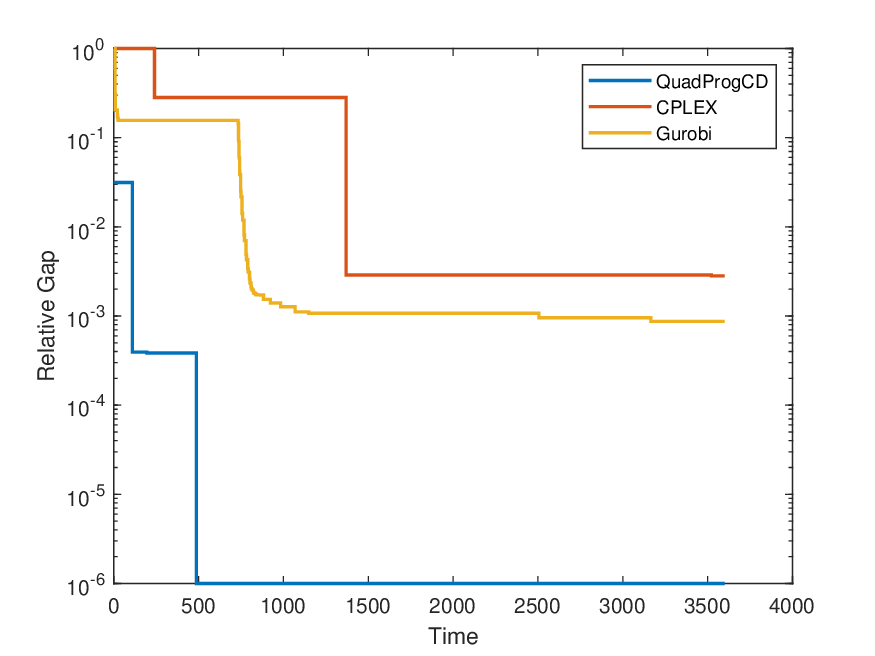}
        \caption{BioData instance 305293.}
        \label{fig:time-relgap-biodata-305293}
    \end{subfigure}
    \hfill
    \begin{subfigure}[t]{0.49\textwidth}
        \includegraphics[width=\textwidth]{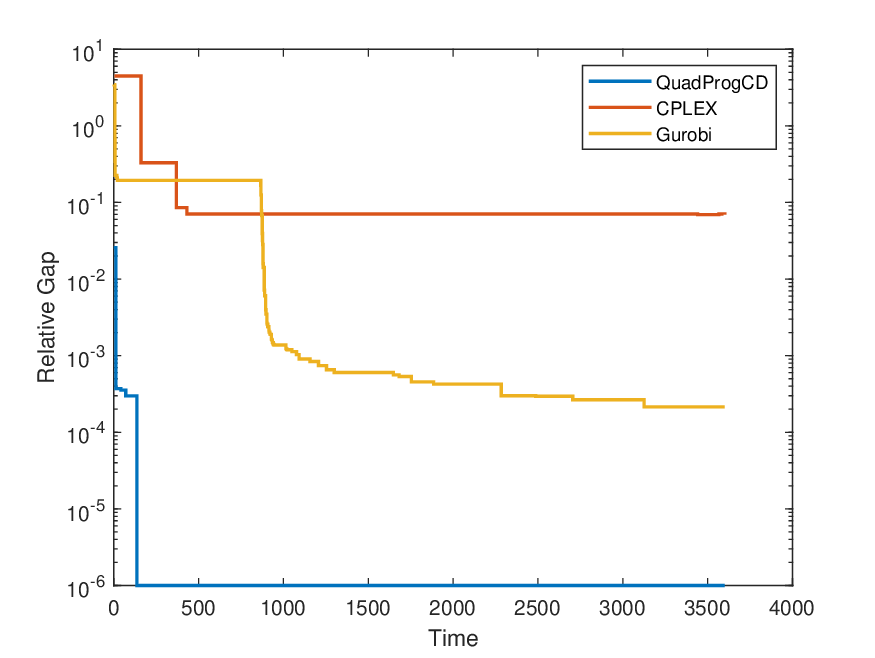}
        \caption{BioData instance 141783.}
        \label{fig:time-relgap-biodata-141783}
    \end{subfigure}
    \begin{subfigure}[t]{0.49\textwidth}
        \includegraphics[width=\textwidth]{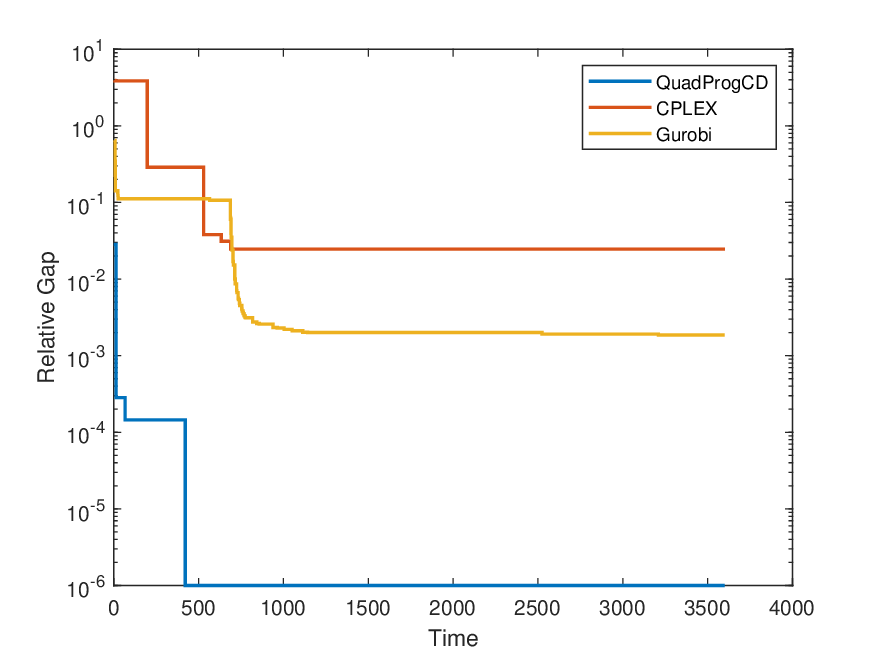}
        \caption{BioData instance 303256.}
        \label{fig:time-relgap-biodata-303256}
    \end{subfigure}
    \hfill
    \begin{subfigure}[t]{0.49\textwidth}
        \includegraphics[width=\textwidth]{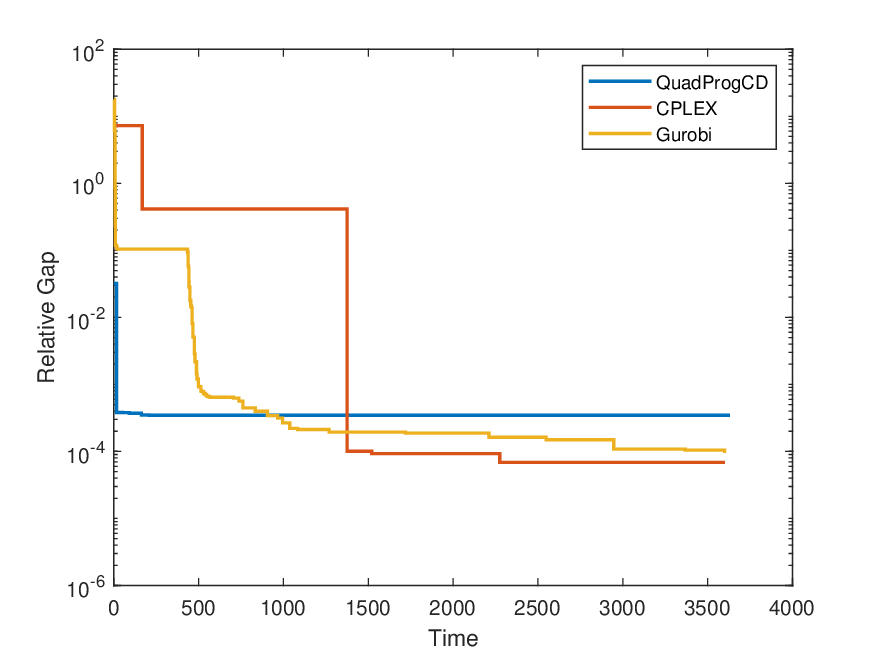}
        \caption{BioData instance 312988.}
        \label{fig:time-relgap-biodata-312988}
    \end{subfigure}
    \begin{subfigure}[t]{0.49\textwidth}
        \includegraphics[width=\textwidth]{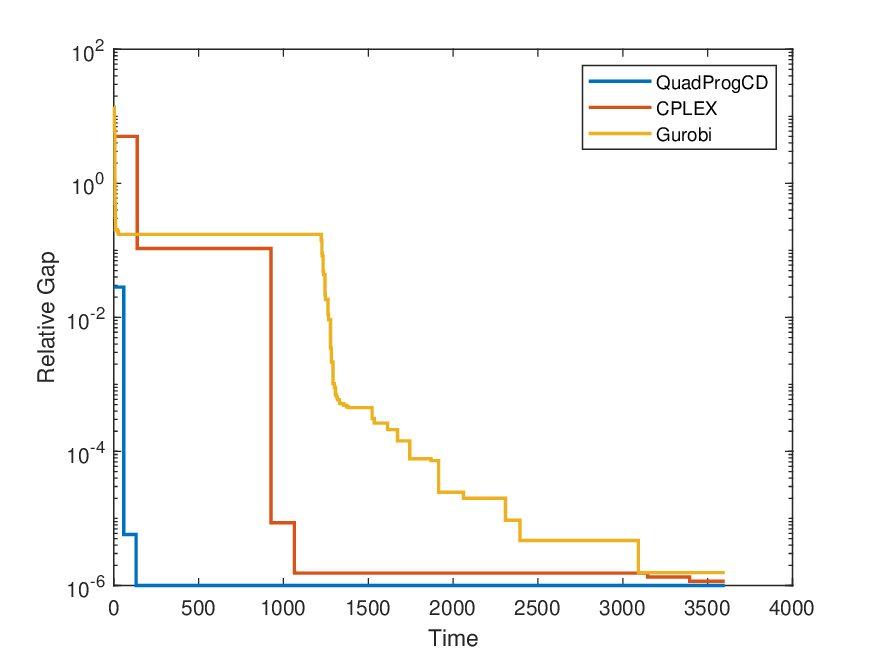}
        \caption{BioData instance 84896.}
        \label{fig:time-relgap-biodata-84896}
    \end{subfigure}
    \caption{Convergence behaviour of three algorithms, with relative gap ${\left(\bar v-\underline v\right)}/{|\underline v|}$ plotted in log scale.}
    \label{fig:time-relgap}
\end{figure}

More detailed information about \texttt{QuadProgCD}, including the number of iteration and the percentage of wall-clock running time spent on different sub-tasks, is summarized in~\Cref{sec:supp-numexp}.

\subsubsection{Comparison between Konno's cut and DNN cut}\label{subsec:comakc}

Recall that~\Cref{thm:phi-val-comp} states:
\begin{equation}
    \label{eq:phival-ineq}
    \bar\phi^K_{\tau,\theta}  \geq \bar\phi^L_{\tau,\theta}\geq \bar\phi^D_{\tau,\theta}\geq  \phi^*_{\tau,\theta},\enspace \forall \theta \geq 0.
\end{equation}
We calculate the relative improvement of the LP relaxation bound and the DNN relaxation bound, which are defined by
\[ \text{RI}_L(\theta_K): = \frac{\bar \phi^K_{\tau, \theta_K} - \bar \phi^L_{\tau, \theta_K}}{\bar \phi^K_{\tau, \theta_K}}, \quad \text{RI}(\theta_K) := \frac{\bar \phi^K_{\tau, \theta_K} - \bar{\phi}^D_{\tau, \theta_K}}{\bar \phi^K_{\tau, \theta_K}}. \]
Note that \eqref{eq:phival-ineq} implies that
\begin{align}\label{a:RI}0\leq \text{RI}_L(\theta_K)\leq \text{RI}(\theta_K) \leq 1.
\end{align}
Let us verify~\eqref{a:RI} with numerical experiments.
We randomly selected 20 instances from the real dataset BioData100 and run one iteration of \texttt{QuadProgCD}.
We display the values of $(\text{RI}_L(\theta_K), \text{RI}(\theta_K))$ in~\Cref{tab:konno-cut-phival-comparison} for those instances with nonempty feasible region  after adding Konno's cut.
\begin{center}
\begin{table}[htbp]%
    \footnotesize
    \centering
    \caption{Comparison of different relaxation bounds.}
    \label{tab:konno-cut-phival-comparison}
    \sisetup{round-mode=places, round-precision=4,
    detect-all}
    \csvreader[tabular=c c c,
    head=false,
    table head=\toprule,
    late after line=\\,
    late after first line=\\\midrule,
    table foot=\bottomrule,]{tables/phival-log.csv}{}{\csvcoli & \csvcolii & \csvcoliii}
    \normalsize
\end{table}
\end{center}
To demonstrate the  impact on generation of deeper cuts, we show in~\Cref{tab:two-time-deep-konno-cut-ar-rate}  the acceptance/rejection rate of the deepened cut $\theta=\theta_K/2$ under the criteria $\bar{\phi}^L_{\tau, \theta}\leq \nu_R-\delta$ and $\bar{\phi}^D_{\tau, \theta}\leq \nu_R-\delta$ over the dataset PCQMAX and CQMAX. The second column displays the number of instances with nonempty feasible region after adding Konno's cut in the first iteration of \texttt{QuadProgCD}. The third column displays the percentage of the instances which satisfy  $\bar{\phi}^L_{\tau, \theta}\leq \nu_R-\delta$ and the fourth column displays the percentage of the instances which satisfy
$ \bar{\phi}^D_{\tau, \theta} \leq \nu_R-\delta$.

\begin{table}[htbp]%
    \footnotesize
    \centering
    \caption{Acceptance/rejection rates of {the} deepened cut $\theta := \theta_K/2$ using different criteria. Here, a valid instance refers to an instance which is feasible after adding a Konno's cut.
    }
    \label{tab:two-time-deep-konno-cut-ar-rate}
    \csvreader[tabular=c c c c,
    head=false,
    table head=\toprule,
    late after line=\\,
    late after first line=\\\midrule,
    table foot=\bottomrule,
    respect all]{tables/deepened-cut-accept-rate.csv}{}{\csvcoli & \csvcolii & \csvcoliii & \csvcoliv}
    \normalsize
\end{table}

\begin{remark}\label{rem:thm2-contrad}
In \Cref{tab:two-time-deep-konno-cut-ar-rate}, it can be observed that the percentage of valid LR cuts is higher than that of valid DNNR cuts for the dataset PCQMAX100, which appears to contradict the inequality $\bar{\phi}_{\tau,\theta}^D\leq \bar{\phi}^L_{\tau, \theta}$ established in~\Cref{thm:phi-val-comp}.
This is due to the fact that numerical solvers cannot return the exact values of $\bar{\phi}_{\tau,\theta}^D$.
To this end, we used the upper bound in \eqref{a:inexcs} instead of the exact value of $\bar{\phi}_{\tau,\theta}^D$ in our computation.
This upper bound of $\bar{\phi}^D_{\tau, \theta}$ may occasionally exceed the value of $\bar{\phi}^L_{\tau, \theta}$, resulting in LR cuts exhibiting better performance than DNNR cuts.
\end{remark}

Finally, we compare the performance of  Option I (Konno's cut)  with Option II (DNN cut).
We randomly selected 20 instances from the real dataset Biodata100
and set $\eta=1/10$ in this experiment. The termination criterion is the reach of a relative gap $\epsilon=10^{-6}$. The time limit is set to be 1000 seconds.
The result is summarized in~\Cref{tab:cut-walltime-comp}.
It can be seen that Option II is superior than Option I in most of the cases, which confirms the effectiveness of the proposed strategy of deepening Konno's cut through the LP and DNN relaxation.

\begin{table}[htb]
    \scriptsize
    \centering
    \caption{Comparison of Option I and with Option II in \texttt{QuadProgCD-R}.
    The time limit is set to 1000 seconds.
    The shortest wall-clock time in each row is in bold font.}
    \label{tab:cut-walltime-comp}
    \sisetup{round-mode=places, round-precision=2,
    detect-all}
    \csvreader[tabular=c c c,
    table head=\toprule
    Instance & Option I & Option II \\
    \midrule,
    head to column names,
    before line=\getmintwo,
    late after last line=\\\bottomrule]{tables/cut-walltime-comp.csv}{}{%
        \csvcoli   & \mynum{\csvcoliii} & \mynum{ \csvcolii}
    }
    \normalsize
\end{table}

\section{Conclusion}\label{sec:conclusion}
In this paper, we develop an efficient global solver for  the concave QP problem~\eqref{prob:original_max_qp0}, which is known to be an NP-hard problem. The concave QP problem finds a recent application in computational biology, where the associated reference value problem needs to be solved for a huge number (317,584) of instances of dimension 78. The existing QP solvers such as \texttt{CPLEX} or \texttt{Gurobi} seem to require years of computational time for this particular task. By revisiting the classical cutting plane method proposed by Tuy and Konno, we propose to construct deeper cuts through the DNN relaxation. We prove that the  DNN relaxation is equivalent to the Shor relaxation of an equivalent QCQP problem. This allows us to write down an SDP formulation of the DNN relaxation which satisfies Slater's condition, which is crucial for the robustness of applying the existing SDP solvers to compute the DNN bound. We also provide the explicit formula for obtaining a valid upper bound from any approximate primal and dual solution. The proposed algorithm is tested on a variety of real and synthetic instances and outperforms  \texttt{CPLEX}, \texttt{Gurobi} and \texttt{quadprogIP} in most of the cases.  In particular,
our method successfully solved all the 317,584 instances within 3 days on the HKU HPC cluster using 32 processors. Moreover, our algorithm demonstrates superior performance on large-scale instances of dimension up to {819}.

\begin{acknowledgements}\label{sec:ack}

We are deeply grateful to the anonymous referees for their careful reading and their many constructive comments, which helped us improve this paper.
The authors would like to thank Prof. Stephen S.-T.  Yau  for introducing the application of concave QP in computational biology and providing the dataset. We also thank Ms. Xinyuan Zhang for the proofreading work.
Parts of computations were performed using research computing facilities offered by Information Technology Services, the University of Hong Kong.

\end{acknowledgements}

\noindent\small{\textbf{Funding}
The work of the authors was supported by NSFC Young Scientist Fund grant 12001458 and Hong Kong
Research Grants Council grant 17317122.
}

\vspace{1em}

\noindent\small{\textbf{Author contributions}
All authors contributed to the study conception and design. Implementation, data generation and numerical experiments were performed by Tianyou Zeng and  Yuchen Lou. The first draft of the manuscript was written by Zheng Qu and all authors commented on previous versions of the manuscript. All authors read and approved the final manuscript.
}

\vspace{1em}

\noindent\small{\textbf{Data Availability Statement}
The datasets generated during and/or analysed during the current study are available in the repository \url{https://github.com/tianyouzeng/QuadProgCD}.
}

\vspace{1em}

\noindent\small{\textbf{Code Availability}
The full code was reviewed and is available on GitHub at the following address: \url{https://github.com/tianyouzeng/QuadProgCD}.
}

\vspace{1em}

\noindent\small{\textbf{Conflict of interest}
The authors have no relevant financial or non-financial interests to disclose.
}

\bibliographystyle{spmpsci}      
\bibliography{references.bib}   

\appendix

\section{Search of KKT Vertex}
\label{sec:KKT}

Consider problem \eqref{prob:original_max_qp0}.
We first define the bilinear function $\Psi: \R^{{k}}\times \R^{{k}}\rightarrow \R$ as follows:
\begin{equation}\label{eq:bilinear-form-psi}
    \Psi(y, \tilde y):= y^\top Q \tilde y+ d^\top y+ d^\top  \tilde y + \nu,\enspace \forall y, \tilde y\in \R^k.
\end{equation}
By simple algebra,
\begin{align}\label{a:psiphi}
    \Psi(y, \tilde y)=\Phi(y)+\frac{1}{2}\<\nabla \Phi(y),  \tilde  y-y>=\Phi( \tilde y)+\frac{1}{2}\<\nabla \Phi( \tilde y), y- \tilde y>,\enspace \forall y, \tilde y \in \R^k.
\end{align}
It then follows that
\begin{align}\label{a:phisdfs}
    \Psi(y, \tilde y)=\frac{\Phi(y)+\Phi(\tilde  y)}{2}-{\frac{1}{4}\<\nabla\Phi(y)-\nabla\Phi( \tilde y), y- \tilde y>}
    \leq \frac{\Phi(y)+\Phi(\tilde y)}{2},\enspace \forall y, \tilde y \in \R^k,
\end{align}
where the inequality follows from the convexity of  $\Phi$. Based on~\eqref{a:phisdfs}, we immediately have the following result; see~\cite[Thm 2.2]{konno1976maximization}.
\begin{theorem}[\cite{konno1976maximization}]\label{thm:konno1}
    For any subset $\cY \subset \R^k$, we have
    $$
    \max_{y\in \cY} \Phi(y)=\max_{y\in \cY,  \tilde y\in \cY} \Psi(y,\tilde  y).
    $$
\end{theorem}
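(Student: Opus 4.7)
The plan is to prove the two inequalities $\max_{x\in \cX} \Phi(x) \leq \max_{x,\tilde x \in \cX} \Psi(x,\tilde x)$ and $\max_{x\in \cX} \Phi(x) \geq \max_{x,\tilde x \in \cX} \Psi(x,\tilde x)$ separately, each of which follows essentially from a one-line observation.

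For the first inequality, I would note that the diagonal value $\Psi(x,x)$ recovers $\Phi(x)$: indeed, $\Psi(x,x) = x^\top H x + 2 p^\top x = \Phi(x)$ directly from the definitions. Hence for any $x\in \cX$, the pair $(x,x)$ is a feasible point of the right-hand side problem with objective value $\Phi(x)$, which yields $\max_{x\in \cX} \Phi(x) \leq \max_{x,\tilde x \in \cX} \Psi(x,\tilde x)$.

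For the reverse inequality, the key tool is the estimate~\eqref{a:phisdfs}, which was derived from the convexity of $\Phi$: $\Psi(x,\tilde x) \leq \tfrac{1}{2}(\Phi(x)+\Phi(\tilde x))$ for all $x,\tilde x$. Applying this to an arbitrary pair $(x,\tilde x)\in \cX\times \cX$ gives $\Psi(x,\tilde x) \leq \tfrac{1}{2}(\Phi(x)+\Phi(\tilde x)) \leq \max(\Phi(x),\Phi(\tilde x)) \leq \max_{z\in \cX} \Phi(z)$. Taking the supremum over $(x,\tilde x)$ on the left yields the desired bound.

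Combining the two directions proves the equality. There is no real obstacle here: the statement is a clean bilinear relaxation identity for convex quadratic objectives, and both inequalities are immediate consequences of (i) the tautology $\Psi(x,x)=\Phi(x)$ and (ii) the already-derived convexity-based majorization~\eqref{a:phisdfs}. The only subtlety worth mentioning is that the set $\cX$ is arbitrary (no convexity or compactness is used), so the proof goes through verbatim when $\cX$ is, for instance, a nonconvex region obtained after adding cuts, which is exactly how the result will be invoked later for justifying~\eqref{a:psisdfs} and~\eqref{eq:ksdfew}.
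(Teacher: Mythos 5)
Your proof is correct and follows essentially the same route as the paper, which states that the theorem is deduced ``immediately'' from the convexity-based majorization~\eqref{a:phisdfs}; your reverse inequality is exactly that deduction, and the forward inequality via the diagonal identity $\Psi(x,x)=\Phi(x)$ is the implicit other half. No further comparison is needed.
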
\noindent
\Cref{thm:konno1} implies that~\eqref{prob:original_max_qp0} is equivalent to the following bilinear program.
\begin{equation}
    \label{prob:bilinear2}
    \begin{aligned}
        \max  &  \qquad \Psi(y, \tilde y) \\
        ~ \textrm{s.t.} &\qquad  y\in \cF,\quad \tilde  y \in \cF.
    \end{aligned}
\end{equation}
In~\cite{konno1976cutting},
Konno proposed a \textit{mountain climbing} algorithm for~\eqref{prob:bilinear2}, which corresponds to alternatively maximizing over $y$ and $\tilde y$.
We define $V(\cF)$ as the set of vertices of $\mathcal{F}$.

\begin{algorithm}[ht]
    \caption{Mountain Climbing Algorithm~\cite{konno1976cutting} (\texttt{Search\_of\_KKT\_Point})}
    \begin{algorithmic}[1]
        \Require {Initial point  $y^{0}\in \R^k$}, feasible region $\cF:=\{y\in \R^k: Fy\leq w, y\geq 0\}$, bilinear function $\Psi(y, \tilde y):= y^\top Q \tilde y+ d^\top y+ d^\top  \tilde y + \nu$.
        \State $\bar y\leftarrow y^{0}$
        \While{true}
        \State ${\tilde y \in  V(\cF)}\cap \argmax \{ \Psi({ y, \bar y}):  { y\in \cF}\} $ \label{alg:tildey}
        \If {$\Psi\left({\tilde y, \tilde y}\right)=\Psi\left({\bar y, \bar y}\right)$}
        \State break;
        \EndIf
        \State $\bar y\leftarrow \tilde y$
        \EndWhile
        \Ensure ${\bar y}\in V\left(\cF\right)$ such that
        \begin{align}\label{a:sdfswerwea}
            {\bar y}\in  V(\cF)\cap \argmax \{ \Psi({\bar y, y}):  {y}\in \cF\}.
        \end{align}
    \end{algorithmic}
    \label{alg:MC}
\end{algorithm}

Since $V(\cF)$ is a finite set, ~\Cref{alg:MC} terminates in finitely many iterations.
The main computational step of~\Cref{alg:MC} is line 3, where one needs to solve an LP problem.
Also note that the starting point {$y^0$} is not required to be a feasible solution.
In view of~\eqref{a:psiphi}, \eqref{a:sdfswerwea} is equivalent to~\eqref{eq:sdfsew1}.
Therefore, the output {$\bar y$} of~\Cref{alg:MC}  is a KKT vertex of problem~\eqref{prob:original_max_qp0}.

\section{Change of Coordinate}\label{sec:mp}

In this section, we discuss the reformulation of the original QP~\eqref{prob:original_max_qp0} so that it satisfies the assumptions~\eqref{esdf}.
Let $\bar{y}$ be any KKT vertex of the original problem~\eqref{prob:original_max_qp0}.
Without loss of generality we assume $\Phi(\bar{y}) < \nu_R$, otherwise the reference value problem is solved.

We lift the original problem~\eqref{prob:original_max_qp0} into $\R^n$ as in~\eqref{prob:original_max_qps}, which can be written as
\begin{equation}\label{eq:lifted}
\begin{aligned}
          \max_{x\in \R^n}  &  \qquad   x^\top H x + 2 p^\top x + \nu  \\
        ~ \textrm{s.t.} & \qquad  Ax = b\\ &  \qquad  x \geq 0.
    \end{aligned}
\end{equation}
where $H:=\begin{pmatrix}
            Q & \bzero  \\
            \bzero^\top & 0
        \end{pmatrix}$, $p:=\begin{pmatrix} d \\ \bzero\end{pmatrix}$, $A:=\begin{pmatrix} F^\top & \bI\end{pmatrix} $ and $b=w$.
Since $\bar{y}$ is a KKT vertex of~\eqref{prob:original_max_qp0},  $\bar{x} := \begin{pmatrix}\bar{y}\\ w - F^\top \bar{y}\end{pmatrix}$ is a KKT point of~\eqref{eq:lifted} and also a vertex of the feasible region of~\eqref{eq:lifted}.
Let $c=H\bar x+p$.
Then $\bar x$ is a basic feasible solution as well as an optimal solution of the following LP problem:
\begin{equation}
    \label{prob:localLP}
    \begin{aligned}
      \max_{x\in \R^n} \quad & c^\top x \\
        \textrm{s.t.}
        \quad &  Ax=b \\
        \quad & x \geq 0.
    \end{aligned}
\end{equation}
Let $\{i_1,\ldots,i_m\}$ be the indices of basic variables of $\bar x$ and $\{j_1,\ldots,j_{n-m}\}$ be the indices of nonbasic variables.
Note that the choice of basic variables is not unique if $\bar x$ is a degenerate vertex.
Let  $B:=\begin{pmatrix}A_{i_1} & \cdots & A_{i_m}
\end{pmatrix}$ be the basis matrix and $N:=\begin{pmatrix}A_{j_1} & \cdots & A_{j_{n-m}}
\end{pmatrix}$ be the nonbasis matrix.
Following the standard notations in linear programming, for any vector $x\in\R^n$ we denote by $x_B$ the subvector $(x_{i_1},\ldots,x_{i_m})\in \R^{m}$ and by $x_N$ the subvector $(x_{j_1},\ldots,x_{j_{n-m}})\in\R^{n-m}$.
Further, we partition accordingly the matrix $H$ and vector $p$ into blocks:
\begin{equation}\label{eq:psdfsdfwe}
    \begin{pmatrix}
        H_{BB} & H_{BN} \\ H^\top_{BN} & H_{NN}
    \end{pmatrix}, \quad \begin{pmatrix}
        p_B \\
            p_N
    \end{pmatrix}.
\end{equation}
By the feasibility and optimality of $\bar x$ as a solution of~\eqref{prob:localLP},  there exists a basis matrix $B$ associated with $\bar x$ such that
\begin{subequations}\label{sub:qeset}
\begin{align}
&\bar x_B=B^{-1}b\geq 0, \enspace \bar x_N=0, \\
&c^\top_B B^{-1}N- c_N^\top \geq 0. \label{a:sub2ebf}
\end{align}
\end{subequations}
Given such choice of basis $B$, for any feasible solution $x$ of~\eqref{eq:lifted}, we have
\begin{equation}\label{eq:sederrr}
\begin{aligned}
&x^\top Hx+2p^\top x +\nu \\& =
\begin{pmatrix} x_B^\top & x_N^\top
\end{pmatrix} \begin{pmatrix}
        H_{BB} & H_{BN} \\ H^\top_{BN} & H_{NN}
    \end{pmatrix} \begin{pmatrix} x_B \\ x_N
\end{pmatrix} +2 \begin{pmatrix} p_B^\top & p_N^\top
\end{pmatrix} \begin{pmatrix} x_B \\  x_N
\end{pmatrix}+\nu  \\
&=\begin{pmatrix} \left(B^{-1}(b-Nx_N)\right)^\top & x_N^\top
\end{pmatrix} \begin{pmatrix}
        H_{BB} & H_{BN} \\ H^\top_{BN} & H_{NN}
    \end{pmatrix} \begin{pmatrix} B^{-1}(b-Nx_N) \\ x_N
\end{pmatrix} \\
& \qquad +2 \begin{pmatrix} p_B^\top & p_N^\top
\end{pmatrix} \begin{pmatrix} B^{-1}(b-Nx_N) \\  x_N
\end{pmatrix}+\nu\\
&=x_N^\top \tilde Q x_N +2 \tilde d^\top x_N+\tilde \nu
\end{aligned}
\end{equation}
where
\begin{equation}\label{eq:symb_def}
    \begin{aligned}
        & \tilde Q = H_{NN} + \left(B^{-1} N\right)^\top H_{BB} B^{-1} N -   \left(B^{-1} N\right)^\top H_{BN}-H^\top_{BN} B^{-1} N,\\
        & \tilde d = p_N+H^\top_{BN}B^{-1} b - \left(B^{-1} N\right)^\top p_B -  \left(B^{-1} N\right)^\top H_{BB}B^{-1} b ,\\
        & \tilde \nu=(B^{-1}b)^\top H_{BB} (B^{-1}b)+2p_B^\top (B^{-1}b)+\nu .
    \end{aligned}
\end{equation}
Then~\eqref{eq:lifted} can be rewritten equivalently as follows:
\begin{equation}\label{eq:liftedr2}
\begin{aligned}
          \max_{y\in \R^k}  &  \qquad   y^\top \tilde Q y + 2 \tilde d^\top y + \tilde \nu  \\
        ~ \textrm{s.t.} & \qquad  \tilde F ^\top y \leq  \tilde w\\ &  \qquad  y \geq 0
    \end{aligned}
\end{equation}
where \begin{align}\label{a:qerf}\tilde F:=(B^{-1}N)^\top,\enspace \tilde w:=B^{-1}b\geq 0. \end{align}
Recall that
$$
 c_B=(H\bar x+p)_{B}=H_{BB}B^{-1}b+p_B,\enspace c_N=(H\bar x+p)_{N}=H^\top_{BN} B^{-1}b+p_N.
 $$
 Thus
 $$
 c_B^\top B^{-1}N-c_N^\top=\left(H_{BB}B^{-1}b+p_B\right)^\top B^{-1}N-
\left( H^\top_{BN} B^{-1}b+p_N\right)^\top=(-\tilde d)^\top,$$
and it follows from~\eqref{a:sub2ebf} that $\tilde d\leq 0$.
We also know that
\begin{align}
\Phi(\bar y)= \bar x^\top H \bar x
+2 p^\top \bar x+\nu = \tilde \nu,\end{align}
where the first equality can be checked from the definition of $H$ and $\bar x$, the second equality follows from~\eqref{eq:sederrr} and the fact that $\bar x_N=0$. Hence we have $\tilde \nu<\nu_R$. Therefore~\eqref{eq:liftedr2} is an equivalent model of~\eqref{prob:original_max_qp0} and it satisfies $\tilde w\geq 0,\enspace \tilde d\leq 0, \enspace \tilde \nu<\nu_R$.
The above transformation procedure is summarized in~\Cref{alg:minimalprogram}.

\begin{algorithm}
    \caption{\texttt{{Change\_of\_Coordinate}}}
    \begin{algorithmic}[1]
        \Require  KKT vertex $\bar y\in \R^k$,  matrix $Q\in \cS^k$, vector $d\in \R^k$, constant $\nu\in \R$, matrix $F\in \R^{k\times m}$, vector $w\in \R^m$.
\State $H\leftarrow \begin{pmatrix}
            Q & \bzero  \\
            \bzero^\top & 0
        \end{pmatrix}$, $p\leftarrow \begin{pmatrix} d \\ \bzero\end{pmatrix}$, $A \leftarrow \begin{pmatrix} F^\top & \bI\end{pmatrix} $, $b\leftarrow w$.
        \State $\bar x\leftarrow \begin{pmatrix}\bar y\\  w-F^\top \bar y\end{pmatrix}$.
        \State $c\leftarrow H\bar x+p$.
\State Choose basis matrix $B$ and non-basis matrix $N$ associated with $\bar x$ such that~\eqref{sub:qeset} holds.
\State Compute {$(\tilde{Q},\tilde{d},\tilde{F},\tilde{w},\tilde{\nu})$} according to~\eqref{eq:symb_def} and~\eqref{a:qerf}.
        \Ensure {$(\tilde{Q},\tilde{d},\tilde{F},\tilde{w},\tilde{\nu})$} .
    \end{algorithmic}
    \label{alg:minimalprogram}
\end{algorithm}

\section{Proof of \texorpdfstring{\Cref{l:posdf}}{Lemma 7}}\label{sec:proofof}
\begin{proof}[proof of~\Cref{l:posdf}]
Note that $\tau_i=0$ if and only if $Q_{ii}=0$.
Therefore, we have $Q_{11}=\cdots=Q_{\ell \ell}=0$.
Since $Q\succeq 0$, the first $\ell$ columns of $Q$ are all zero vectors, i.e., $Q_1=\cdots=Q_\ell=\bf{0}$.
It follows that for any scalar $\eta \geq 0$ we have
$$
\begin{aligned}
    g(\eta^{-1} e_i) & = \max\{(\eta^{-1} Q_i+d)^\top \tilde y+d_i+\nu: \tilde y\in \cF_\tau\} \\
    & = \max\{d^\top \tilde y+ d_i +\nu: \tilde y\in \cF_\tau\} \\
    & \leq \nu \leq \nu_R-\delta', \qquad \forall i = 1, \ldots, \ell.
\end{aligned}
$$
Here, the first inequality follows from $d\leq 0$ and $\tilde y\geq 0$ for any $\tilde y\in \cF_\tau$.
For $i \in \{\ell + 1, \ldots, k\}$, using the function $\Psi$ defined in~\eqref{eq:bilinear-form-psi}, we obtain
\begin{equation*}
    g(\tau_i^{-1} e_i) = \Psi(\tau_i^{-1} e_i, y^i) \leq \frac{\Phi(y^i) + \Phi(\tau_i^{-1} e_i)}{2} \leq \frac{\nu_R - \delta + \omega}{2}\leq \frac{\nu_R+\omega}{2}.
\end{equation*}
where the first inequality follows from~\eqref{a:phisdfs}, and the second is due to the definition of Tuy's cut and $\omega$.
\qed
\end{proof}

\section{Further Discussion on Konno's Cut}\label{sec:konnocut}
Recall in Section~\ref{subsec:konno}, we have discussed the generation of the Konno's cut by computing $\theta_K$, and the elements $\{(\theta_K)_i: i=1,\ldots, k\}$ are defined in~\eqref{a:deflamdai}.
In this section, we argue that Konno's cut exists and is computable by solving $k$ LP problems, given that $\delta<\nu_R-\nu$.

We first address~\Cref{rem:compute_alpha_as_LP}. It was shown in
\cite{konno1976maximization}  that for each $i\in \{1,\ldots,k\}$,
$(\theta_K)_i$ can be computed by solving the following LP:
\begin{equation}
    \label{prob:konnocuttheta2}
    \begin{aligned}
   { (\theta_K)_i^{-1}} =  \min \quad & -d^\top z+(\nu_R-\delta-\nu)z_0 \\
        \textrm{s.t.}
        \quad &  -{F^\top} z+w z_0  \geq 0 \\
        \quad &  \tau^\top z-z_0 \geq 0 \\
  \quad & (Q_{i})^\top z+d_i z_0=1\\
        \quad & z \geq 0,\enspace z_0\geq 0 .
    \end{aligned}
\end{equation}
We refer interested readers to~\cite{konno1976maximization,tuy2016nonconvex} for more details.
The pseudo code for the generation of Konno's cut is summarized in Algorithm~\ref{alg:Konnocut} below.

Next, we demonstrate that even if LP~\eqref{prob:konnocuttheta2} is pathological, $(\theta_K)_i$ is still well-defined under Assumption~\ref{ass:bound_and_interior}.
The first case is when~\eqref{prob:konnocuttheta2} is infeasible, then the optimal value of~\eqref{prob:konnocuttheta2} is $+\infty$, and thus $(\theta_K)_i=0$.
The other pathological case is when the optimal value of~\eqref{prob:konnocuttheta2} is $0$, in which case $(\theta_K)_i=+\infty$ and Konno's cut does not exist.
However, as long as $\delta<\nu_R-\nu$, such case never occurs.
We first note that by $\nu_R-\delta-\nu>0$, $d\leq 0$, and the constraints $z\geq 0$, $z_0\geq 0$, the objective of~\eqref{prob:konnocuttheta2} is always nonnegative.
Having an optimal value equal to 0 thus implies that there exists an optimal  solution $(z^*, z_0^*)$ satisfying $z_0^*=0$. It follows that  $F^\top z^*\leq 0$, $z^*\geq 0$ and $(Q_i)^\top z^*=1 $. In particular,
 $z^*\neq 0$ is a direction of $\cF$, which contradicts with the boundedness of $\cF$ in Assumption~\ref{ass:bound_and_interior}.
We then conclude that each $(\theta_K)_i$ is well-defined for any $\delta<\nu_R-\nu$ under Assumption~\ref{ass:bound_and_interior}.

\begin{algorithm}
    \caption{\texttt{Konno\_Cut}}
    \begin{algorithmic}[1]
        \Require $Q,d, \nu, F,w,\nu_R,\tau,\delta$.
        \For{$i=1,\ldots,k$}
        \State Compute $(\theta_K)_i$ by solving the LP problem~\eqref{prob:konnocuttheta2}.
        \EndFor
        \Ensure $\theta_K \in \R_+^k$ such that $\bar\phi^K_{\tau, \theta_K}\leq \nu_R-\delta$.
    \end{algorithmic}
    \label{alg:Konnocut}
\end{algorithm}

\section{Generation of Deeper Cuts}
\label{sec:bisec-cut}

In~\Cref{sec:cuts} it is mentioned that deeper cuts can be generated using a bisection method.
Below in Algorithm~\ref{alg:ikc} we attach the pseudo code for generation of those deeper cuts.

\begin{algorithm}[ht!]
    \caption{\texttt{DNN\_Cut}}
    \begin{algorithmic}[1]
        \Require $Q,d, \nu, F,w,\nu_R,\tau,\theta_0,\eta, \delta$.
 \State $ \theta\leftarrow\theta_0$;
    \State Compute $\bar\phi^L_{{\tau}, \eta\theta_0}$ based on~\eqref{prob:konnoQP2e};
        \If {$\bar\phi^L_{\tau, \eta\theta_0}\leq \nu_R-\delta$}
        \State $ \theta\leftarrow \eta\theta_0$;
        \Else
        \State Compute $\bar\phi^D_{\tau, \eta\theta_0}$ based on~\eqref{prob:SQP3Shor3ae}.
  \If {$\bar\phi^D_{\tau, \eta\theta_0}\leq \nu_R-\delta$}
  \State $ \theta\leftarrow \eta\theta_0$;
  \EndIf
        \EndIf
        \Ensure $\theta\leq \theta_0$ such that $\phi^*_{\tau, \theta} \leq \nu_R-\delta$.
    \end{algorithmic}
    \label{alg:ikc}
\end{algorithm}

\section{Supplementary Numerical Results}\label{sec:supp-numexp}

In addition to the numerical experiments presented in~\Cref{sec:experiments}, we provide more detailed results for \texttt{QuadProgCD-G} in this section.
\Cref{tab:supp-numexp-biodata-100} supplements~\Cref{tab:conv-comparison-biodata} by offering additional information about the algorithm performance on the instances from the real dataset BioData100.
Specifically, the table includes the number of iteration performed by \texttt{QuadProgCD-G} before convergence or reaching the time limit, and the proportion of wallclock time spent on different tasks.
The tasks include the search of KKT points, computation of DNN bounds, performing Konno's cut (\Cref{alg:Konnocut}), and performing DNN cuts (\Cref{alg:ikc}).
The same information for datasets PCQMAX and CQMAX is presented in~\Cref{tab:supp-numexp-synthetic}.
According to the tables, the majority of computation efforts is on computing DNN bounds, and generating DNN cuts takes the second place.
This is not surprising since these two tasks requires solving several semidefinite programs.

We also test the per-instance variation of {the} lower and upper objective bounds computed by \texttt{QuadProgCD-G} with respect to iteration, as shown in~\Cref{fig:time-lbub}.
The test instances are chosen to be the same ones as in~\Cref{fig:time-relgap}.
It can be observed that the upper bound for objective value is \emph{not} decreased for each iteration, meaning that adding a single cut may not reduce the upper bound.
Therefore, keeping adding various cuts to the feasible region is necessary for the convergence.

\begin{table}[htpb]
    \centering
    \caption{Detailed numerical results for \texttt{QuadProgCD} on real dataset BioData100.
    A red instance index means that \texttt{QuadProgCD} failed to solve the corresponding instance to the prescribed tolerance ($\epsilon = 10^{-6}$) in the time limit ($3600$ seconds).
    The percentages in each row may not add up to $100\%$ due to time spent on pre-processing and post-processing.}
    \begin{tabular}{c c c c c c }
        \toprule
         & & \multicolumn{4}{c}{Time} \\
        \cmidrule(l{1pt}r{2pt}){3-6}
        Instance & Iteration & KKT Points & DNN Bound & Konno's Cut & DNN Cut \\
        \midrule
        257424 & 5 & 0.23\% & 61.05\% & 0.64\% & 38.05\% \\
        84896 & 3 & 0.28\% & 93.15\% & 0.80\% & 5.74\% \\
        141783 & 3 & 0.21\% & 68.57\% & 0.57\% & 30.64\% \\
        123044 & 2 & 0.29\% & 96.16\% & 0.60\% & 2.93\% \\
        303256 & 2 & 0.32\% & 93.66\% & 0.72\% & 5.28\% \\
        {312988} & 37 & 0.08\% & 60.27\% & 0.33\% & 39.32\% \\
        229644 & 1 & 0.21\% & 96.47\% & 1.09\% & 2.17\% \\
        259490 & 2 & 0.28\% & 93.29\% & 0.68\% & 5.73\% \\
        143339 & 2 & 0.27\% & 95.16\% & 0.60\% & 3.95\% \\
        95618 & 12 & 0.15\% & 62.00\% & 0.56\% & 37.28\% \\
        23659 & 2 & 0.27\% & 95.36\% & 0.63\% & 3.71\% \\
        284341 & 2 & 0.24\% & 95.11\% & 0.56\% & 4.07\% \\
        79834 & 2 & 0.28\% & 94.65\% & 0.61\% & 4.44\% \\
        88552 & 2 & 0.25\% & 98.25\% & 0.60\% & 0.88\% \\
        305293 & 6 & 0.20\% & 70.23\% & 0.64\% & 28.93\% \\
        149864 & 1 & 0.27\% & 98.40\% & 1.29\% & 0.00\% \\
        81426 & 4 & 0.19\% & 73.45\% & 0.69\% & 25.65\% \\
        274863 & 4 & 0.17\% & 66.34\% & 0.60\% & 32.88\% \\
        82857 & 2 & 0.27\% & 93.97\% & 0.60\% & 5.15\% \\
        211213 & 3 & 0.26\% & 92.89\% & 0.74\% & 6.09\% \\
        \bottomrule
    \end{tabular}
    \label{tab:supp-numexp-biodata-100}
\end{table}

\begin{table}[htpb]
    \centering
    \caption{Detailed numerical results for \texttt{QuadProgCD} on synthetic datasets PCQMAX and CQMAX.
    The percentages in each row may not add up to $100\%$ due to time spent on pre-processing and post-processing.}
    \fontsize{7pt}{7pt}\selectfont
    \begin{tabular}{c c c c c c }
        \toprule
         & & \multicolumn{4}{c}{Time} \\
        \cmidrule(l{1pt}r{2pt}){3-6}
        Instance & Iteration & KKT Points & DNN Bound & Konno's Cut & DNN Cut \\
        \midrule
        pcqmax20-1 & 1 & 44.92\% & 29.96\% & 16.78\% & 0.00\% \\
        pcqmax20-2 & 1 & 44.06\% & 16.39\% & 36.23\% & 0.00\% \\
        pcqmax20-3 & 2 & 19.06\% & 44.73\% & 23.06\% & 7.19\% \\
        pcqmax20-4 & 1 & 11.13\% & 36.80\% & 38.89\% & 5.45\% \\
        pcqmax20-5 & 1 & 25.83\% & 62.66\% & 0.00\% & 0.00\% \\
        pcqmax20-6 & 1 & 5.86\% & 36.42\% & 52.58\% & 0.00\% \\
        pcqmax20-7 & 2 & 16.57\% & 41.01\% & 33.90\% & 3.23\% \\
        pcqmax20-8 & 1 & 12.17\% & 37.85\% & 44.41\% & 0.00\% \\
        pcqmax20-9 & 1 & 11.94\% & 35.74\% & 47.68\% & 0.00\% \\
        pcqmax20-10 & 1 & 11.29\% & 26.05\% & 58.34\% & 0.00\% \\
        pcqmax50-1 & 1 & 1.98\% & 83.72\% & 13.72\% & 0.00\% \\
        pcqmax50-2 & 1 & 4.93\% & 83.86\% & 10.60\% & 0.00\% \\
        pcqmax50-3 & 2 & 3.43\% & 85.03\% & 5.77\% & 5.09\% \\
        pcqmax50-4 & 1 & 2.17\% & 75.51\% & 10.14\% & 11.18\% \\
        pcqmax50-5 & 2 & 3.33\% & 85.05\% & 6.21\% & 4.83\% \\
        pcqmax50-6 & 2 & 5.70\% & 83.94\% & 5.74\% & 4.09\% \\
        pcqmax50-7 & 1 & 4.29\% & 95.08\% & 0.00\% & 0.00\% \\
        pcqmax50-8 & 1 & 2.06\% & 85.78\% & 11.58\% & 0.00\% \\
        pcqmax50-9 & 1 & 11.63\% & 64.98\% & 22.78\% & 0.00\% \\
        pcqmax50-10 & 1 & 2.47\% & 67.91\% & 16.22\% & 12.46\% \\
        pcqmax100-1 & 2 & 0.30\% & 93.86\% & 0.89\% & 4.92\% \\
        pcqmax100-2 & 1 & 0.28\% & 93.06\% & 1.36\% & 5.22\% \\
        pcqmax100-3 & 2 & 0.62\% & 94.02\% & 0.80\% & 4.54\% \\
        pcqmax100-4 & 2 & 0.65\% & 91.62\% & 0.88\% & 6.83\% \\
        pcqmax100-5 & 2 & 0.48\% & 94.33\% & 0.81\% & 4.36\% \\
        pcqmax100-6 & 2 & 0.69\% & 95.71\% & 0.62\% & 2.95\% \\
        pcqmax100-7 & 3 & 0.58\% & 91.75\% & 1.00\% & 6.64\% \\
        pcqmax100-8 & 2 & 0.24\% & 91.24\% & 0.87\% & 7.63\% \\
        pcqmax100-9 & 2 & 0.67\% & 96.21\% & 0.63\% & 2.46\% \\
        pcqmax100-10 & 2 & 0.24\% & 92.82\% & 0.82\% & 6.10\% \\
        \midrule
        cqmax20-1 & 1 & 28.60\% & 31.66\% & 35.14\% & 0.00\% \\
        cqmax20-2 & 1 & 19.73\% & 36.85\% & 39.32\% & 0.00\% \\
        cqmax20-3 & 1 & 27.12\% & 31.25\% & 37.68\% & 0.00\% \\
        cqmax20-4 & 1 & 26.27\% & 26.16\% & 43.73\% & 0.00\% \\
        cqmax20-5 & 1 & 7.36\% & 35.53\% & 51.42\% & 0.00\% \\
        cqmax20-6 & 1 & 14.37\% & 39.98\% & 41.18\% & 0.00\% \\
        cqmax20-7 & 1 & 16.09\% & 35.26\% & 35.99\% & 6.16\% \\
        cqmax20-8 & 1 & 14.59\% & 37.36\% & 43.46\% & 0.00\% \\
        cqmax20-9 & 1 & 21.29\% & 35.86\% & 38.69\% & 0.00\% \\
        cqmax20-10 & 1 & 8.44\% & 41.69\% & 45.49\% & 0.00\% \\
        cqmax50-1 & 1 & 2.58\% & 83.90\% & 12.82\% & 0.00\% \\
        cqmax50-2 & 1 & 2.97\% & 76.89\% & 12.49\% & 6.49\% \\
        cqmax50-3 & 1 & 4.91\% & 76.77\% & 11.38\% & 5.93\% \\
        cqmax50-4 & 1 & 3.65\% & 83.82\% & 11.79\% & 0.00\% \\
        cqmax50-5 & 2 & 4.32\% & 87.41\% & 4.48\% & 3.24\% \\
        cqmax50-6 & 2 & 4.83\% & 86.43\% & 5.05\% & 3.16\% \\
        cqmax50-7 & 2 & 3.59\% & 88.31\% & 5.85\% & 1.71\% \\
        cqmax50-8 & 2 & 4.75\% & 84.07\% & 5.79\% & 4.92\% \\
        cqmax50-9 & 1 & 4.36\% & 84.58\% & 10.48\% & 0.00\% \\
        cqmax50-10 & 1 & 2.87\% & 82.76\% & 13.64\% & 0.00\% \\
        cqmax100-1 & 2 & 0.77\% & 96.69\% & 0.52\% & 1.98\% \\
        cqmax100-2 & 1 & 1.70\% & 97.04\% & 1.20\% & 0.00\% \\
        cqmax100-3 & 2 & 0.78\% & 97.11\% & 0.48\% & 1.58\% \\
        cqmax100-4 & 2 & 0.49\% & 96.93\% & 0.53\% & 2.00\% \\
        cqmax100-5 & 1 & 1.72\% & 96.86\% & 1.34\% & 0.00\% \\
        cqmax100-6 & 3 & 0.62\% & 83.99\% & 0.54\% & 14.81\% \\
        cqmax100-7 & 2 & 1.01\% & 96.54\% & 0.49\% & 1.93\% \\
        cqmax100-8 & 3 & 0.92\% & 76.25\% & 0.45\% & 22.35\% \\
        cqmax100-9 & 3 & 0.59\% & 73.98\% & 0.42\% & 24.99\% \\
        cqmax100-10 & 2 & 1.06\% & 97.14\% & 0.45\% & 1.30\% \\
        \bottomrule
    \end{tabular}
    \label{tab:supp-numexp-synthetic}
\end{table}

\begin{figure}[htbp]
    \centering
    \begin{subfigure}[t]{0.49\textwidth}
        \includegraphics[width=\textwidth]{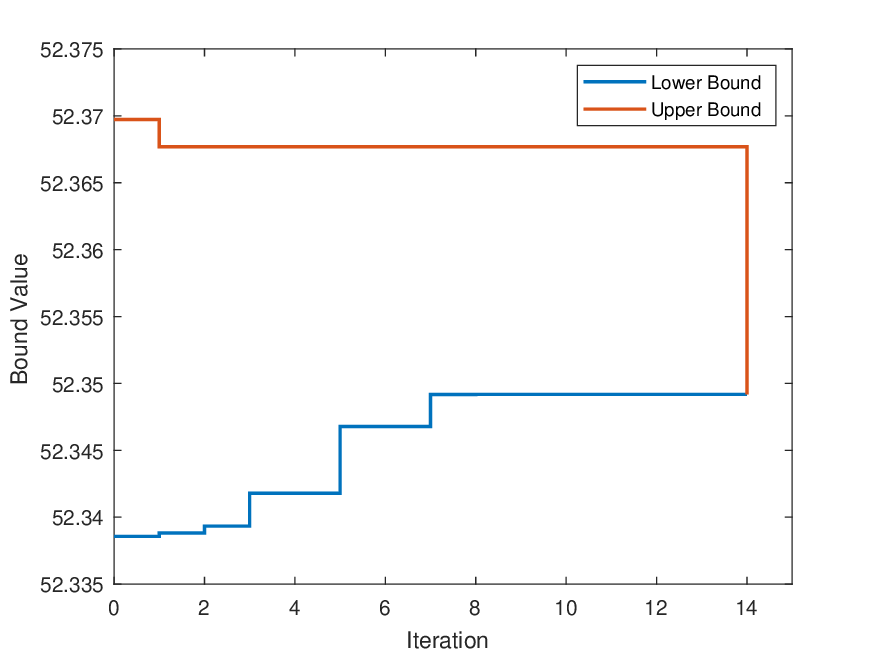}
        \caption{BioData instance 257424.}
        \label{fig:time-lbub-biodata-257424}
    \end{subfigure}
    \begin{subfigure}[t]{0.49\textwidth}
        \includegraphics[width=\textwidth]{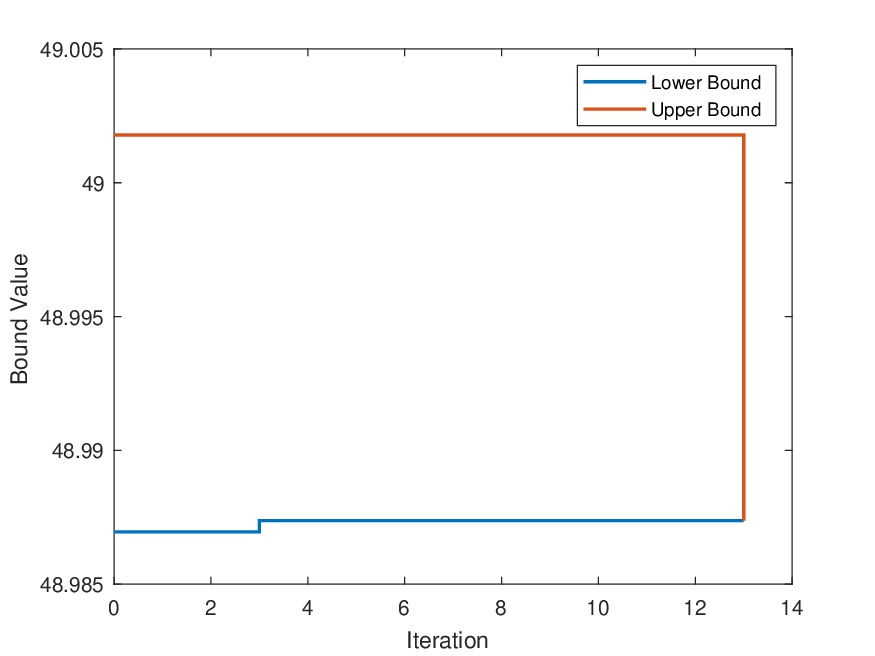}
        \caption{BioData instance 305293.}
        \label{fig:time-lbub-biodata-305293}
    \end{subfigure}
    \hfill
    \begin{subfigure}[t]{0.49\textwidth}
        \includegraphics[width=\textwidth]{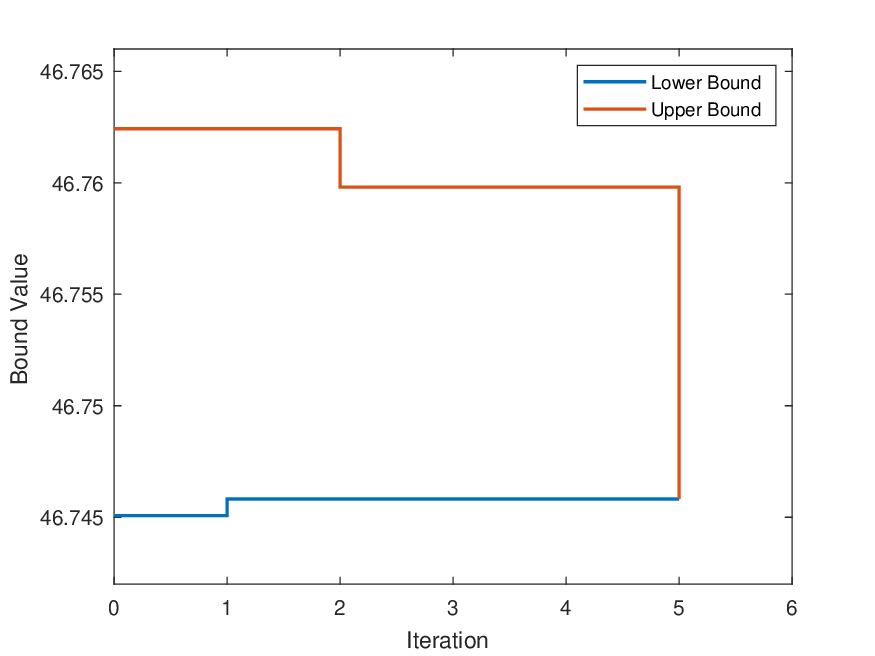}
        \caption{BioData instance 141783.}
        \label{fig:time-lbub-biodata-141783}
    \end{subfigure}
    \begin{subfigure}[t]{0.49\textwidth}
        \includegraphics[width=\textwidth]{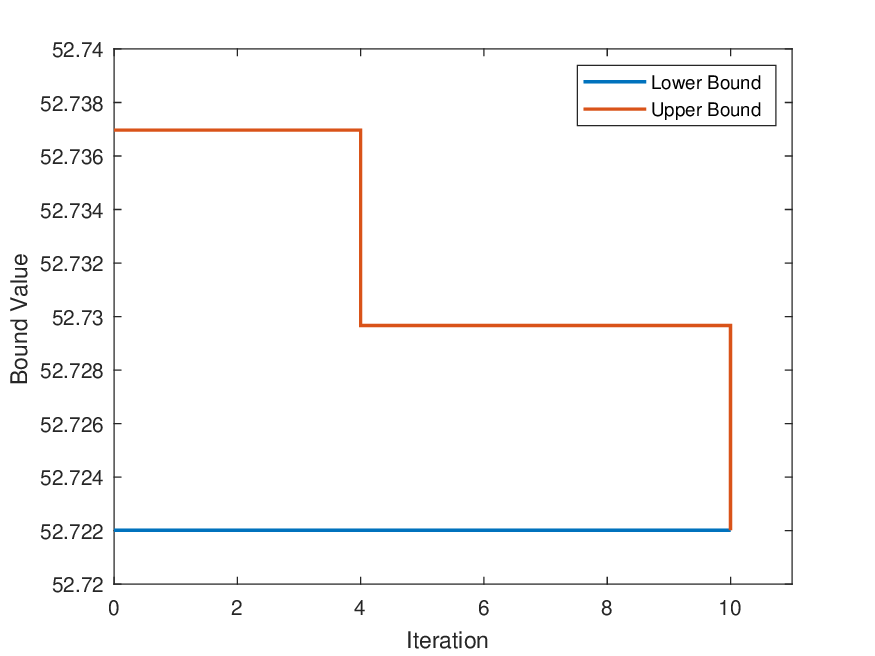}
        \caption{BioData instance 303256.}
        \label{fig:time-lbub-biodata-303256}
    \end{subfigure}
    \hfill
    \begin{subfigure}[t]{0.49\textwidth}
        \includegraphics[width=\textwidth]{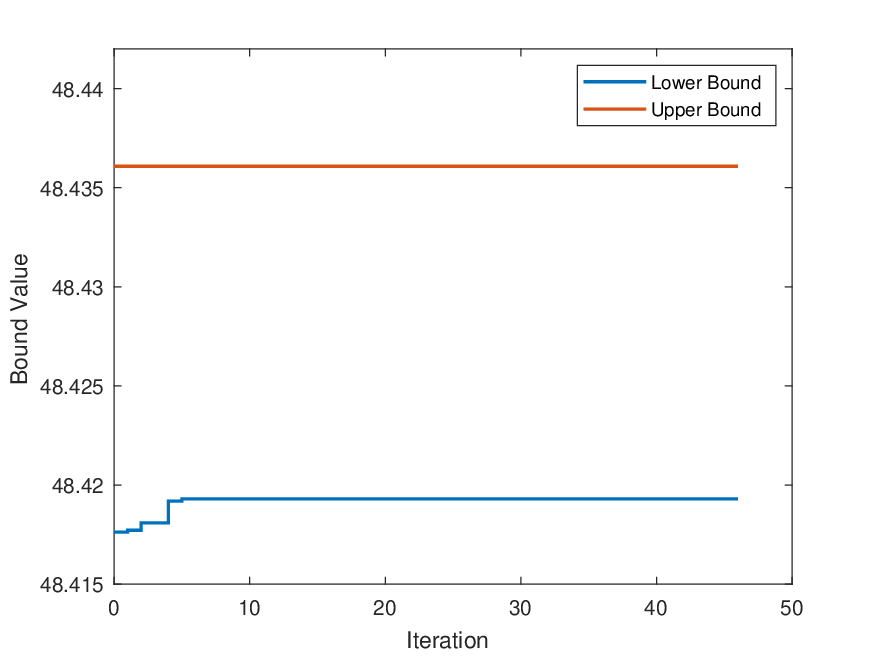}
        \caption{BioData instance 312988.}
        \label{fig:time-lbub-biodata-312988}
    \end{subfigure}
    \begin{subfigure}[t]{0.49\textwidth}
        \includegraphics[width=\textwidth]{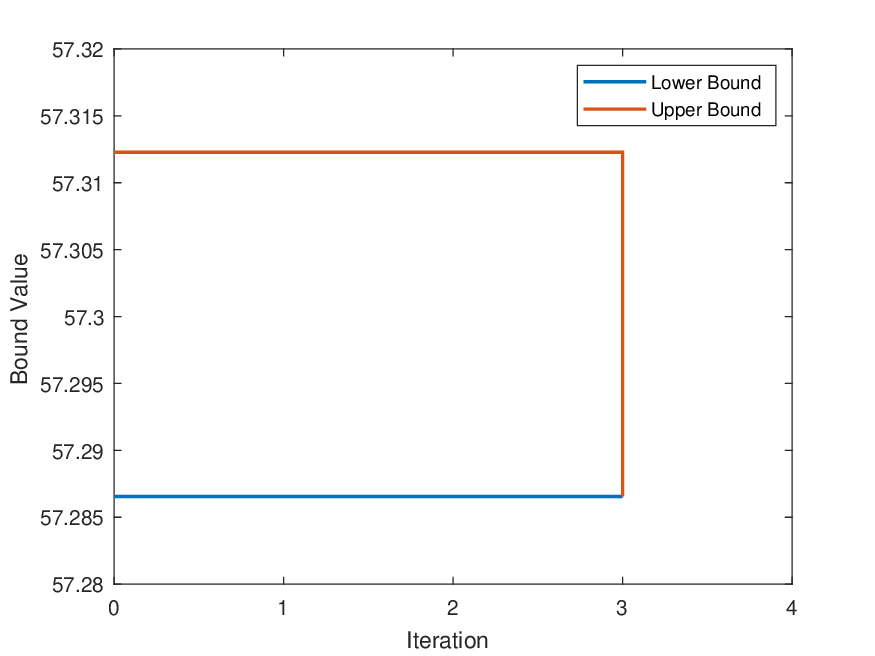}
        \caption{BioData instance 84896.}
        \label{fig:time-lbub-biodata-84896}
    \end{subfigure}
    \caption{Computed lower and upper bounds in each iteration of \texttt{QuadProgCD}.}
    \label{fig:time-lbub}
\end{figure}

\end{document}